\Crefname{subsection}{Section}{Sections}
\Crefname{lem}{Lemma}{Lemmas}
\Crefname{prop}{Proposition}{Propositions}
\newtheorem{thm}{Theorem}
\newtheorem{lem}{Lemma}
\newtheorem{rem}{Remark}
\newtheorem{prop}{Proposition}
\theoremstyle{definition}
\newcommand{\cl}{\mathcal}
\newcommand{\parx}{\frac{\partial}{\partial x}}
\newcommand{\parz}{\frac{\partial}{\partial z}}
\newcommand{\veps}{\varepsilon}
\newcommand{\dg}{\dagger}
\newcommand{\mcI}{\mathcal{I}}
\newcommand{\mcS}{\mathcal{S}}
\newcommand{\mcP}{\mathcal{P}}
\DeclareMathOperator{\premax}{premax}
\numberwithin{equation}{section}
\title{Completing the enumeration of inversion sequences avoiding triples of relations}
\author{Nathan Britt\footnote{britt.n@unimelb.edu.au} }
\author{Nicholas Beaton\footnote{nrbeaton@unimelb.edu.au}}
\affil{School of Mathematics and Statistics, The University of Melbourne, VIC 3010, Australia}
\begin{document}

\maketitle

\abstract{An inversion sequence of length $n$ is an integer sequence $(a_1, \ldots, a_n)$ such that $0 \le a_i < i$ for all~$i$. The study of pattern-avoiding inversion sequences was initiated in 2015 by Mansour and Shattuck and in 2016 by Corteel, Martinez, Savage and Weselcouch. Martinez and Savage later defined a new type of pattern, a triple of binary relations, of which there are currently 14 uncounted avoidance classes. We complete the enumeration for all of these classes using generating tree methods ``growing on the left'' and ``growing on the right''. For many of these classes we are able to find algebraic generating functions. We also discuss the asymptotic behaviour of the counting sequences.}

\section{Introduction}

The introduction of pattern avoidance in permutations is often attributed to Knuth in the first volume of his seminal work \emph{The Art of Computer Programming} in 1968 \cite{knuth_taocp}. There, he was interested in permutations which could be sorted using a single stack, which turn out to be exactly those which avoid the pattern 231. A more systematic study was initiated by Simion and Schmidt in 1985 \cite{simion_restricted_1985}, and there have been many works since which relate pattern-avoiding permutations to combinatorics, computer science, algebra, statistical mechanics and other fields. Kitaev's 2011 monograph \cite{kitaev_patterns_2011} gives a thorough treatment to the subject.

An \emph{inversion sequence} of length $n$ is a sequence $(a_1,\dots,a_n)$ of integers which satisfy $0 \leq a_i < i$ for all $i$. If $\mcI_n$ is the set of such sequences then clearly $I_n = |\mcI_n| = n!$. The name comes from a simple bijection with the set of permutations $\mcS_n$
\begin{equation}
    \Phi: \mcS_n \to \mcI_n, \qquad \Phi(\pi_1\dots\pi_n) = (a_1,\dots,a_n) \text{ where } a_i = |\{j : j<i \text{ and } \pi_j > \pi_i\}|.
\end{equation}

One can consider inversion sequences which avoid patterns in much the same way as pattern-avoiding permutations. We say that a sequence of non-negative integers $\sigma = \sigma_1\cdots\sigma_k$ is an \emph{inversion pattern} (or just \emph{pattern}) if $\sigma$ contains every digit from $0$ to $\max(\sigma)$. We also define the \emph{reduction} (or \emph{standardisation}) of a sequence of non-negative integers $w = w_1\cdots w_n$ to be the sequence obtained by replacing the smallest entries of $w$ by 0, the next-smallest entries by 1, and so on. That is, the reduction of $w$ is the pattern whose entries are in the same relative order as $w$. For example, the reduction of $1035573$ is $1023342$.

If an inversion sequence $a$ has a (not necessarily consecutive) subsequence $a_{i_1}a_{i_2}\cdots a_{i_k}$ whose reduction is a pattern $\sigma$, we say that $a$ \emph{contains the pattern} $\sigma$. If no such subsequence exists then we say $a$ \emph{avoids} $\sigma$. For example, the inversion sequence $002140348$ contains the pattern $011$ (four times, in the two occurrences of the subsequence $044$ and the subsequences $244$ and $144$), but does not contain the pattern $110$.

The study of pattern-avoiding inversion sequences was initiated by Mansour and Shattuck in 2015 \cite{mansour_pattern_2015} and Corteel, Martinez, Savage and Weselcouch in 2016 \cite{corteel_patterns_2016}. These and most subsequent works have focused on patterns of length 3, of which there are 13:
\[ \mcP = \{000,\, 001,\, 010,\, 011,\, 012,\, 021,\, 100,\, 101,\, 102,\, 110,\, 120,\, 201,\, 210\}.\]
(Note that because inversion sequences, and hence patterns, can have repeated digits, there are many more patterns of a given size for inversion sequences than there are for permutations.)  If $S$ is a set of patterns then let $\mcI_n(S)$ denote the set of length $n$ inversion sequences which avoid all patterns in $S$, and let $I_n(S) = |\mcI_n(S)|$. We will generally abuse notation and write $\mcI_n(\sigma_1, \sigma_2,\dots)$ and so on. By convention we will set $I_0(S) = 1$ for any $S$. We also write $\mcI(S) = \bigcup_{n \geq 0} \mcI_n(S)$.

Most of the cases of $\mcI_n(\sigma)$ for $\sigma \in \mcP$ (i.e.\ avoidance of a single length 3 pattern) were treated in \cite{mansour_pattern_2015, corteel_patterns_2016}. The solutions are variously in the forms of simple closed solutions (e.g.\ $I_n(001) = 2^{n-1}$), other well-known combinatorial sequences (e.g.\ $I_n(021)$ is the $n^\text{th}$ large Schr\"oder number), algebraic generating functions, or functional equations involving several variables which yield polynomial-time enumerations. The exceptions are the patterns 010 and 100 which are covered in \cite{testart_completing_2024} and \cite{kotsireas_algorithmic_2024} respectively. (At this point we highly recommend Pantone's 2024 article \cite{Pantone} which gives a nice summary of the state of the art.) There are two pairs of patterns which have the same counting sequence, namely 101 / 110 and 201 / 210, and hence there are 11 ``Wilf classes'' for single patterns of length 3.

For pairs of patterns of length 3, there are 48 Wilf classes among the $\binom{13}{2}=78$ pairs, as determined by Yan and Lin \cite{yan_inversion_2020}. They collected many known results and found connections with existing combinatorial objects, as well as finding many new solutions. Subsequent works \cite{beaton_enumerating_2019, bouvel_semi-baxter_2018, testart_completing_2024, testart_generating_2025, kotsireas_algorithmic_2024, chen_bijection_2024} have further studied pairs of patterns of length 3, and solutions of one form or another are now known for all cases. Callan, Jel\'inek and Mansour \cite{callan_inversion_2023} undertook a systematic study of triples of patterns of length 3, determining that there are probably 137 (and at most 139) Wilf classes among the $\binom{13}{3} = 286$ triples. The counting sequences for many of these classes are known but there still remain a number of open problems. Callan and Mansour \cite{callan_quadruples_2023} studied quadruples of patterns of length 3, determining that there are between 212 and 215 Wilf classes.

A different type of inversion pattern was introduced by Martinez and Savage in 2018 \cite{martinez_patterns_2018}. They considered ``triples of binary relations'' $(\rho_1, \rho_2, \rho_3)$ and the corresponding avoidance classes $\mcI_n(\rho_1,\rho_2,\rho_3)$, where we say
\[ a \in \mcI_n(\rho_1,\rho_2,\rho_3) \text{ if there is no } i < j < k \text{ with } a_i \rho_1 a_j,\, a_j \rho_2 a_k, \text{ and } a_i \rho_3 a_k. \]
Here we have $\rho_i \in \{<, >, \leq, \geq, =, \neq, -\}$ where `$-$' means no restriction is applied, i.e. $x\! -\! y$ is true for any $x,y$. Avoidance of any given triple of binary relations always corresponds to avoidance of a set $S$ of patterns of length 3, but the size of $S$ depends on the particular triple. For example, $\mcI_n(<,>,<) = \mcI_n(021)$ while $\mcI_n(\geq, \geq, \geq) = \mcI_n(000, 100, 110, 210)$. Here we follow the notation of Martinez and Savage and say that if two triples of relations have the same avoidance \emph{sets} then they are \emph{equivalent}, while if they have the same counting sequences they are \emph{Wilf-equivalent}.

There are $7^3 = 343$ possible triples of relations, which Martinez and Savage found to consist of 98 equivalence classes. They conjectured that these fall into 63 Wilf-equivalence classes, which has subsequently been verified (see below for details). Of these 63 Wilf classes, they found:
\begin{itemize}
    \item 5 counting sequences are eventually constant;
    \item 30 counting sequences already appeared in the Online Encyclopedia of Integer Sequences (OEIS)~\cite{oeis};
    \item 28 counting sequences were not in the OEIS.
\end{itemize}
All but a handful of the 30 Wilf classes whose counting sequences already appeared in the OEIS were dealt with in \cite{martinez_patterns_2018}. Many of these results had already appeared in the literature, mostly relating to pattern-avoiding permutations or regular pattern-avoiding inversion sequences. The remaining open problems (either proving Wilf-equivalence or counting sequences) were solved by Cao, Jin and Lin in 2019 \cite{cao_enumeration_2019} and Lin in 2020 \cite{lin_patterns_2020}.

The 28 Wilf classes which were not yet in the OEIS were mostly left as open problems in \cite{martinez_patterns_2018}. Many others have been addressed subsequently by several authors. We summarise what we believe to be the current state of affairs in \cref{table:main}.

\begin{table}
\centering
{\small
    \begin{tabular}{c l c c p{4cm}}
    Triple & Patterns avoided & OEIS & Index in \cite{martinez_patterns_2018} & Results \\[1mm]
    \hline\\[-2mm]
     $(-,\neq,\geq)$ & $(010,101,110,120,201,210)^*$ & \href{https://oeis.org/A279553}{A279553} & 663A & Wilf-eq with 663B \cite{martinez_patterns_2018}; \cref{ssec:663A} (alg:3) \\[1mm]
    $(\neq,-,\geq)$ & $(010,100,101,120,201,210)^*$ & \href{https://oeis.org/A279553}{A279553} & 663B & as above\\[1mm]
    $(\neq, \neq, \geq)$ & $(010,101,120,201,210)^*$ & \href{https://oeis.org/A279554}{A279554} & 733 & \cref{ssec:733} (alg:4)\\[1mm]
    $(\leq, >, \neq)$ & $(021,110,120)$ & \href{https://oeis.org/A279557}{A279557} & 805 & counted in \cite{martinez_patterns_2018} (alg:2)\\[1mm]
    $(>,-,\neq)$ & $(100,102,201,210)$ & \href{https://oeis.org/A279560}{A279560} & 1016 & counted in \cite{martinez_patterns_2018}; \cref{ssec:1016} (alg:2) \\[1mm]
    $(>,\neq,-)$ & $(101,102,201,210)$ & \href{https://oeis.org/A279561}{A279561} & 1079A & counted in \cite{martinez_patterns_2018} (alg:2) \\[1mm]
    $(<,>,\neq)$ & $(021,120)$ & \href{https://oeis.org/A279561}{A279561} & 1079B & counted in \cite{yan_inversion_2020} (alg:2)\\[1mm]
    $(>,\leq,\neq)$ & $(100,102,201)$ & \href{https://oeis.org/A279562}{A279562} & 1176 & \cref{ssec:1176} (alg:2) \\[1mm]
    $(>,\neq,\neq)$ & $(102,201,210)$ & \href{https://oeis.org/A279563}{A279563} & 1253 & \cref{ssec:1253} (alg:2) \\[1mm]
    $(-,-,>)$ & $(100,110,120,201,210)$ & \href{https://oeis.org/A279565}{A279565} & 1420 & unreferenced solution in OEIS; \cref{ssec:1420} (alg:3) \\[1mm] 
    $(>,<,\leq)$ & $(102,201)$ & \href{https://oeis.org/A279566}{A279566} & 1465 & counted in \cite{testart_completing_2024} (alg:2) \\[1mm]
    $(-,\neq,>)$ & $(110,120,201,210)$ & \href{https://oeis.org/A279568}{A279568} & 1833A & Wilf-eq with 1833B \cite{martinez_patterns_2018}; \cref{ssec:1833A} (alg:6) \\[1mm]
    $(\neq,-,>)$ & $(100,120,201,210)$ & \href{https://oeis.org/A279568}{A279568} & 1833B & as above\\[1mm]
    $(\neq, \neq, >)$ & $(120,201,210)$ & \href{https://oeis.org/A279572}{A279572} & 2468 & counted in \cite{mansour_sorting_2025} (alg:6) \\[2mm]
    \hline \\[-2mm]
    $(-, \geq, \geq)$ & $(000,010,100,110,120,210)^*$ & \href{https://oeis.org/A279544}{A279544} & 214 & \cref{ssec:214} \\[1mm]
    $(\leq,-,\geq)$ & $(000,010,110,120)$ & \href{https://oeis.org/A279551}{A279551} & 247 & \cref{ssec:247} \\[1mm]
    $(-,\geq,=)$ & $(000,010)$ & \href{https://oeis.org/A279552}{A279552} & 345 & counted in \cite{testart_completing_2024}\\[1mm]
    $(-,>,\geq)$ & $(010,110,120,210)$ & \href{https://oeis.org/A279555}{A279555} & 746A & Wilf-eq with 746B \cite{martinez_patterns_2018}; counted in \cite{asinowski_patterns_2025}\\[1mm]
    $(\neq,\geq,\geq)$ & $(010,100,120,210)$ & \href{https://oeis.org/A279555}{A279555} & 746B & as above\\[1mm]
    $(\leq, \neq, \geq)$ & $(010,110,120)$ & \href{https://oeis.org/A279556}{A279556} & 759 & \cref{ssec:759} \\[1mm]
    $(\neq, >, \geq)$ & $(010,120,210)$ & \href{https://oeis.org/A279558}{A279558} & 830 & \cref{ssec:830} \\[1mm]
    $(<,-,\geq)$ & $(010,120)$ & \href{https://oeis.org/A279559}{A279559} & 845 & counted in \cite{testart_completing_2024}\\[1mm]
    $(-, >, =)$ & $(010)$ & \href{https://oeis.org/A263779}{A263779} & 979 & counted in \cite{testart_completing_2024}\\[1mm]
    $(\geq,=,-)$ & $(000,100)$ & \href{https://oeis.org/A279564}{A279564} & 1267 & counted in \cite{testart_completing_2024} \\[1mm]
    $(-,\geq,>)$ & $(100,110,120,210)$ & \href{https://oeis.org/A279567}{A279567} & 1509 & \cref{ssec:1509} \\[1mm]
    $(-,>,>)$ & $(110,120,210)$ & \href{https://oeis.org/A279569}{A279569} & 1953A & Wilf-eq with 1953B \cite{martinez_patterns_2018}; \cref{ssec:1953A} \\[1mm]
    $(\neq,\geq,>)$ & $(100,120,210)$ & \href{https://oeis.org/A279569}{A279569} & 1953B & as above\\[1mm]
    $(\leq,>,>)$ & $(110,120)$ & \href{https://oeis.org/A279570}{A279570} & 2091 & counted in \cite{testart_completing_2024} \\[1mm] 
    $(>,\leq,\geq)$ & $(100,101,201)$ & \href{https://oeis.org/A279571}{A279571} & 2106 & \cref{ssec:2106} \\[1mm]
    $(\neq,>,>)$ & $(120,210)$ & \href{https://oeis.org/A279573}{A279573} & 2625 & counted in \cite{kotsireas_algorithmic_2024} \\[1mm]
    $(<,-,>)$ & $(120)$ & \href{https://oeis.org/A263778}{A263778} & 2803 & counted in \cite{mansour_pattern_2015} \\[1mm]
    $(>,=,-)$ & $(100)$ & \href{https://oeis.org/A263780}{A263780} & 3399 & counted in \cite{kotsireas_algorithmic_2024} \\[1mm]
    $(-,<,>)$ & $(201)$ & \href{https://oeis.org/A263777}{A263777} & 4306A & Wilf-eq with 4306B \cite{corteel_patterns_2016, mansour_pattern_2015}; counted in \cite{mansour_pattern_2015} \\[1mm]
    $(>,>,-)$ & $(210)$ & \href{https://oeis.org/A263777}{A263777} & 4306B & as above
    \end{tabular}
}
    \caption{The classes from \cite{martinez_patterns_2018} whose counting sequences  did not appear in the OEIS at the time of publication. The terms in the fourth column correspond to $I_7(\cdot)$ (this being the first term which can be used to unambiguously distinguish between different classes). The $^*$ superscript in the second column indicates classes where the wrong set of patterns was cited in \cite{martinez_patterns_2018,oeis} (the enumeration itself was correct; the descriptions have now been corrected in the OEIS). Those classes in the top section with an (alg:$d$) in the final column have algebraic generating functions, with $d$ indicating the degree of the minimal polynomial. We believe the ones in the bottom section do not have algebraic generating functions.}
    \label{table:main}
\end{table}

In this paper we address all the 14 classes in \cref{table:main} which have not yet been solved. In all cases we derive a combinatorial generating tree which gives a polynomial-time algorithm for computing the number of inversion sequences which avoid the given triple of relations. For most classes we can further obtain a system of functional equations satisfied by a set of generating functions, and 7 of these are solvable via the kernel method \cite{prodinger_kernel_2004}, yielding an algebraic solution. For all the classes where we do not obtain an algebraic generating function, we believe (based on guessing software and/or asymptotics) that the generating functions are not algebraic and probably not D-finite. All enumeration results in this paper have been checked against brute-force enumeration, as well as existing series in the OEIS. \textsc{Mathematica} was used to assist with solving the algebraic cases -- a notebook which goes through the calculations can be found at the second author's website \href{www.nicholasbeaton.com}{www.nicholasbeaton.com}, and is also available as an ancillary file at \href{https://arxiv.org/abs/2512.21943}{arXiv:2512:21943}.

There are two broad strategies we employ here to count pattern-avoiding inversion sequences. These are:
\begin{itemize}
    \item ``Growing on the right'': Here one constructs a generating tree by tracking a set of statistics about a pattern-avoiding inversion sequence $a = (a_1,\dots,a_n)$ in order to determine which potential values can be appended to $a$ to give a valid longer inversion sequence.
    \item ``Growing on the left'': Again a tree is constructed by tracking a set of statistics, but this time the operation on $a$ involves increasing all non-zero entries of $a$ by 1, increasing some zero entries by 1 (exactly which depends on the patterns to be avoided), and then prepending a 0.
\end{itemize}
The name ``growing on the left'' was introduced by Testart \cite{testart_generating_2025}, although we believe the construction was first used in \cite{corteel_patterns_2016}. 

% \nick{maybe more introduction stuff here}
A \textit{generating tree} for a combinatorial class $\cl{C}$ consists of a set of ``labels'' and ``successions'' such that each object of $\cl{C}$ of size $n \in \mathbb{Z}_{\ge0}$ has one corresponding label at depth $n$ of the tree, and the successions describe how to obtain objects of size $n+1$ from the objects of size $n$. This technique eschews the need to track entire inversion sequences, and instead allows tracking of multiple statistics which determine how an inversion sequence may progress to other pattern-avoiding inversion sequences.

The efficiency of a generating tree depends on the number of distinct labels at each depth, where fewer labels requires less computation. 

The structure of this paper is as follows. In \cref{sec:right} we consider those classes from \cref{table:main} which are most naturally constructed by growing on the right. In \cref{sec:left} we study the classes for which growing on the left is simpler. In \cref{sec:asymptotics} we summarise the asymptotic behaviour of $I_n(S)$ for each $S$ considered in \cref{sec:right,sec:left}. Finally in \cref{sec:conclusion} we consider some open questions and possible avenues for future work.

\section{Inversion sequences grown on the right}\label{sec:right}

In this section we look at those classes from \cref{table:main} which are (what we found to be) most straightforward to construct when growing on the right. For each class, we present a succession rule which tracks (a subset of) the following statistics for each inversion sequence:
\begin{itemize}
    \item $\max(a) := \max\{a_i \;|\; i \in [1, n],\; a = (a_1, \ldots, a_n)\}$: the greatest value of $a$;
    \item $\premax(a) := \max\Big(\{a_i \;|\; i \in [1, n],\; a = (a_1, \ldots, a_n)\}\Big\backslash \{\max(a)\} \Big)$: the second-greatest value of $a$ (called the `premaximum').
\end{itemize}

Throughout, $a_i$ is the \textit{value} of $a = (a_1, \ldots, a_n)$ which occurs in \textit{position} $i$. Each tree has as its root the unique empty inversion sequence, defined to have length 0 and maximum 0. For the purposes of our succession rules, any inversion sequence with maximum value 0 has undefined premaximum, with the exception of class 2106 in \cref{ssec:2106} where such inversion sequences are defined with premaximum 0.

Define `unimodal' to describe inversion sequences of the form $a = a_1 \le \cdots \le a_i \ge \cdots \geq a_n$ for some $i \in [1, n]$.

\subsection{Class 1176: $(>,\leq,\neq) \equiv (100, 102, 201)$}
\label{ssec:1176}

% The set of patterns corresponds to the triple of relations $(>, \le, \ne)$. 
Here we construct a succession rule growing on the right which is based heavily on the succession rule for $\cl{I}(102, 201)$ in \cite{testart_completing_2024}. We will then adapt this rule for classes 1253 ($(>,\neq,\neq) \equiv (102,201,210)$, see \cref{ssec:1253}) and 1016 ($(>,-,\neq) \equiv (100, 102, 201, 210)$, see \Cref{ssec:1016})
and present the algebraic generating function for each of these classes.

The following result from \cite{testart_completing_2024} will be useful so we restate it here.
\begin{lem}[\cite{testart_completing_2024}]\label{p1p2p3_102-201}
An inversion sequence $a \in \cl{I}$ avoids the patterns 102 and 201 if and only if it can be factored as $a = p_1 \circ p_2 \circ p_3$ where:
\begin{itemize}
    \item $p_1$ is a non-decreasing inversion sequence such that $\max(p_1) < \max(a)$ (equivalently $p_1$ ends on a value below $max(a)$);
    \item $p_2$ is a sequence of values from $\{\max(a), \: \premax(a)\}$, beginning on the value $max(a)$;
    \item $p_3$ is a (possibly empty) non-increasing sequence such that $\max(p_3) < \premax(a)$ (equivalently $p_3$ begins on a value below $\premax(a)$).
\end{itemize}
\end{lem}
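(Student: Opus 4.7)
The plan is to prove both directions of this equivalence. For the ``if'' direction (the factorisation forces avoidance of 102 and 201), I would run a case analysis on the block assignment of the three indices $i<j<k$ of a hypothetical 102 or 201 occurrence. Pattern 102 requires $a_j<a_i<a_k$ and pattern 201 requires $a_j<a_k<a_i$, and in every assignment the monotonicity of $p_1$ and $p_3$ together with the value constraints (values in $p_1$ are $<\max(a)$, values in $p_2$ lie in $\{\max(a),\premax(a)\}$, and values in $p_3$ are $<\premax(a)$) rule out both relative orderings. Most sub-cases are immediate: for example, if $i,j$ both lie in $p_1$ then $a_i\le a_j$, or if $i\in p_1$ and $j\in p_2$ with $a_j<a_i$, then $a_j=\premax(a)$ forces $a_i>\premax(a)$, contradicting $a_i<\max(a)$.

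For the ``only if'' direction, the strategy is to produce the factorisation explicitly. Let $i$ be the position of the first occurrence of $\max(a)$ and let $j$ be the last position whose value is $\ge\premax(a)$ (so $j\ge i$). Set $p_1=a_1\cdots a_{i-1}$, $p_2=a_i\cdots a_j$, and $p_3=a_{j+1}\cdots a_n$. It then remains to verify the three listed properties. The claim that $p_1$ is non-decreasing follows because any descent $i'<j'<i$ with $a_{i'}>a_{j'}$ combined with $a_i=\max(a)>a_{i'}$ produces a 102 occurrence at $(i',j',i)$. The claim that $p_3$ is non-increasing (with values below $\premax(a)$, which is automatic from the definition of $j$) follows because any ascent $l<l'$ in $p_3$ combined with the first $\max$ at position $i$ produces a 201 occurrence at $(i,l,l')$.

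The main step needing care is showing that every entry of $p_2$ lies in $\{\max(a),\premax(a)\}$. Suppose that $a_l<\premax(a)$ for some $i<l<j$. If $a_j=\premax(a)$ then $(i,l,j)$ witnesses a 201 directly. If instead $a_j=\max(a)$, I would locate an index $l'$ with $a_{l'}=\premax(a)$; such an $l'$ exists by definition of $\premax(a)$ and cannot lie in $p_3$ (whose values are strictly below $\premax(a)$), so $l'\in p_1\cup p_2$. A short case split on the sign of $l'-l$ then produces either a 102 triple at $(l',l,j)$ (when $l'<l$) or a 201 triple at $(i,l,l')$ (when $l'>l$). I expect this final sub-case analysis, together with the correct specification of the split point $j$, to be the main obstacle, since the possible interleaving of $\max$ and $\premax$ values within $p_2$ means the witnessing pattern must be chosen based on where the relevant $\premax$ occurrence sits relative to the offending entry.
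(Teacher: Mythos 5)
The paper offers no internal proof to compare against: \cref{p1p2p3_102-201} is restated verbatim from \cite{testart_completing_2024}, with the proof deferred to that reference. Judged on its own, your argument is correct and complete, and it is the natural proof of this statement. For the ``if'' direction, the key fact (which you use implicitly, e.g.\ when you say $a_i>\premax(a)$ contradicts $a_i<\max(a)$, and which is worth stating once explicitly) is that no value of $a$ lies strictly between $\premax(a)$ and $\max(a)$; combined with the monotonicity of $p_1$ and $p_3$ and the observation that both 102 and 201 require a strict descent $a_i>a_j$ followed by a later entry strictly above $a_j$, every placement of the three indices among the blocks is excluded, exactly as in your sample cases. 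For the ``only if'' direction, your split points (first occurrence of $\max(a)$; last position with value $\ge\premax(a)$) are the right ones, and all three verifications go through: a descent inside $p_1$ together with the first maximum gives a 102; a strict ascent inside $p_3$ together with the first maximum gives a 201 (using $a_{l'}<\premax(a)<\max(a)$); and for an offending entry $a_l<\premax(a)$ with $i<l<j$, your witnesses $(i,l,j)$ when $a_j=\premax(a)$, $(l',l,j)$ when $a_j=\max(a)$ and $l'<l$, and $(i,l,l')$ when $a_j=\max(a)$ and $l'>l$ are all genuine occurrences, where the occurrence $l'$ of $\premax(a)$ exists because $\premax(a)$ is an attained value and must lie strictly to the left of $j$ in that case. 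The only loose ends are trivial: the case split on $a_j$ silently uses that $a_j\ge\premax(a)$ forces $a_j\in\{\premax(a),\max(a)\}$ (again the ``no value in between'' fact), and the degenerate case $\max(a)=0$, where $\premax(a)$ is undefined by the paper's convention and one simply takes $p_1,p_3$ empty and $p_2=a$. Neither is a gap.
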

We modify this characterisation to fit inversion sequences which also avoid $100$. The pattern $100$ cannot occur in the non-decreasing $p_1$, but may be present in $p_2$ if $\premax(a)$ occurs more than once as $\left(\max(a), \premax(a), \premax(a)\right)$. It may also appear across $p_2$ and $p_3$ if any value in $p_3$ repeats: $\left(\max(a), k, k\right)$ for some $k < \premax(a)$.

\begin{prop}
An inversion sequence $a \in \cl{I}$ avoids the set of patterns $(100, 102, 201)$ if and only if it can be factored as $a = p_1 \circ q_2 \circ q_3$ where:
\begin{itemize}
    \item $p_1$ is as in \cref{p1p2p3_102-201};
    \item $q_2$ is a sequence of values from $\{\max(a),\;\premax(a)\}$, beginning on the value $\max(a)$ and such that $\premax(a)$ occurs no more than once;
    \item $q_3$ is a (possibly empty) decreasing sequence such that $\max(q_3) < \premax(a)$.
\end{itemize}
\end{prop}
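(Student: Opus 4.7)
The plan is to leverage \cref{p1p2p3_102-201} directly: since $\{100,102,201\} \supset \{102,201\}$, every inversion sequence $a$ avoiding the full triple already decomposes as $a = p_1 \circ p_2 \circ p_3$ in the lemma's sense. What remains is to show that the additional avoidance of $100$ is equivalent to strengthening $p_2$ so that $\premax(a)$ does not repeat and strengthening $p_3$ from non-increasing to strictly decreasing. This gives the proof a clean ``extra constraints'' structure rather than requiring a construction from scratch.

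For the forward direction, assume $a$ avoids $(100,102,201)$ and apply \cref{p1p2p3_102-201}. The block $p_1$ is non-decreasing, so no $100$ can lie entirely within it. I would then exhibit $100$ patterns that necessarily appear if the extra conditions fail: if $\premax(a)$ occurs at two positions $j < k$ of $p_2$, then taking $i$ to be the first position of $p_2$ yields the triple $(\max(a),\premax(a),\premax(a))$; and if some value $v$ repeats in $p_3$, then any $\max(a)$-position in $p_2$ together with the two copies of $v$ gives $(\max(a),v,v)$. Both are $100$ patterns, so avoidance forces the stated restrictions, and we set $q_2 := p_2$, $q_3 := p_3$.

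For the backward direction, suppose $a = p_1 \circ q_2 \circ q_3$ with the stated properties. Avoidance of $(102,201)$ is inherited from \cref{p1p2p3_102-201}, so it only remains to rule out $100$ patterns. I would argue by cases on which block contains $a_i$ in a candidate triple $i<j<k$ with $a_i > a_j = a_k$. If $a_i \in p_1$, then $a_i < \max(a)$, while any $a_j$ in $q_2$ satisfies $a_j \geq \premax(a) \geq a_i$ and any $a_j, a_k$ both in $q_3$ are distinct by strict decrease; monotonicity of $p_1$ handles the remaining subcases. If $a_i \in q_2$, then $a_i \in \{\max(a),\premax(a)\}$ and $a_j = a_k$ must lie strictly below $a_i$; the ``at most one $\premax(a)$'' condition eliminates the option $a_j, a_k \in q_2$, strict decrease of $q_3$ eliminates $a_j, a_k \in q_3$, and a mixed placement fails because $q_2$-values below $\max(a)$ equal $\premax(a)$ whereas $q_3$-values are strictly smaller. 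If $a_i \in q_3$, then $a_j, a_k \in q_3$ are forced and strict decrease finishes the argument.

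The main obstacle is the bookkeeping in the backward direction, where one must check every way a prospective $100$ can straddle the three blocks. This is made manageable by the fact that each block has a tight value constraint — $p_1$-values are bounded above by $\max(p_1) < \max(a)$, $q_2$-values lie in $\{\max(a),\premax(a)\}$, and $q_3$-values lie strictly below $\premax(a)$ — which reduces every subcase to an immediate comparison, so no additional machinery beyond \cref{p1p2p3_102-201} is required.
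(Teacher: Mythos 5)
Your proof is correct and takes essentially the same route as the paper, which likewise derives the proposition from \cref{p1p2p3_102-201} by observing that a $100$ pattern cannot occur in the non-decreasing $p_1$ and otherwise arises only as $(\max(a),\premax(a),\premax(a))$ inside $p_2$ or as $(\max(a),v,v)$ straddling $p_2$ and $p_3$. Your case analysis for the backward direction merely spells out in full what the paper leaves implicit.
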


We can now begin to describe our succession rule grown on the right. A further remark in \cite{testart_completing_2024} is that $a \in \cl{I}_n(102, 201)$ is unimodal if and only if it additionally avoids the pattern $101$, and that an occurrence of $101$ necessarily takes the form $\left(\max(a), \premax(a), \max(a)\right)$. The same is true for $\cl{I}(100, 102, 201)$. It is convenient to separately consider inversion sequences which avoid $(100, 102, 201)$ and $101$, and then those which contain $101$.

Define the following sets:
\begin{itemize}
    \item $\cl{A}_{n, h} := \{a \in \cl{I}_n(10) \;|\; a_n = h\}$: the set of non-decreasing inversion sequences of length $n$ ending on value $h$;
    \item $\cl{E}_{n, \ell}^{(1)} := \{a \in \cl{I}_n(100, 101, 102, 201)\; |\; a_n = \ell \;\text{and}\; a_n < \max(a)\}$: the set of unimodal inversion sequences of length $n$ which end on value $\ell$ and strictly descend after the final occurrence of the maximum.
\end{itemize}
For these and other sets we will sometimes omit one or all of the subscripts, for example just writing $\cl{A}_n$ or $\cl{E}^{(1)}$. Usually this should be interpreted as taking a union over all of the omitted parameters.

The structure for a $(100, 101, 102, 201)$-avoiding inversion sequence is simple: from left to right, values repeat or increase up to a maximum, and afterwards strictly decrease. Equivalently, an element of $\cl{A}_{n, h}$ may transition to an element $\cl{A}_{n+1, i}$ for $i \in [h, n]$ while repeating or increasing, or may transition to an element of $\cl{E}_{n+1, \ell}^{(1)}$ for $\ell < h$ when decreasing. An element of $\cl{E}_{n, \ell}^{(1)}$ may only continue to decrease, transitioning to $\cl{E}_{n+1, i}^{(1)}$ for $i < \ell$. We describe this behaviour in the following succession rule:

\begin{prop}
Give the label $(n, h)_a$ to each sequence in $\cl{A}_{n,h}$, and give the label $(\ell)_{e^{(1)}}$ to each sequence in $\cl{E}_{n, \ell}^{(1)}$. We have the following succession rule for $\cl{I}_n(100, 101, 102, 201)$:

\begin{equation}
\Omega_{(100, 101, 102, 201)}: \left\{\begin{aligned}
(0, 0)_a \\
(n, h)_a &\leadsto (n+1, i)_a &&\text{for}\; i \in [h, n] \\
    &\leadsto (i)_{e^{(1)}} &&\text{for}\; i \in [0, h-1] \\
(\ell)_{e^{(1)}} &\leadsto (i)_{e^{(1)}} &&\text{for}\; i \in [0, \ell-1].
\end{aligned}\right.
\label{eqn:rule100-101-102-201}\end{equation}
\end{prop}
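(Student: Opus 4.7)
The plan is to verify the rule in two stages. First, I would establish a bijection at each depth $n$ between $\mathcal{I}_n(100,101,102,201)$ and the label sets $\mathcal{A}_n \sqcup \mathcal{E}^{(1)}_n$. Second, I would check that each succession in $\Omega_{(100,101,102,201)}$ correctly enumerates the valid one-step extensions.

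For the bijection, I would start from \cref{p1p2p3_102-201} together with its refinement for $(100,102,201)$-avoiders, and combine this with the observation recorded just before the proposition that any occurrence of $101$ in such a sequence must take the form $(\max(a), \premax(a), \max(a))$. Hence $101$-avoidance forbids any $\premax$ sitting between two $\max$'s in $q_2$; since $q_2$ already begins with $\max$ and contains $\premax$ at most once, $q_2$ must reduce to a block of $\max$'s. It follows that every $a \in \mathcal{I}_n(100,101,102,201)$ is either non-decreasing (empty $q_3$, so $a_n = \max(a)$) or unimodal with a strictly decreasing tail ending strictly below $\max(a)$. These two cases are disjoint and biject with $\mathcal{A}_n$ and $\mathcal{E}^{(1)}_n$ respectively, matching the proposed labels.

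To verify the successions, fix $a$ and enumerate the candidate appended value $i \in [0,n]$. If $a$ has label $(n,h)_a$, then $i \geq h$ preserves non-decreasingness and yields $(n+1,i)_a$. For $i < h$, the only descent in the extension is the final step from $a_n = h$ to $i$, so any forbidden pattern must involve $i$ together with a pair $(a_p, a_q)$ satisfying $p < q \leq n$; since $a_p \leq a_q$, none of the configurations $100, 101, 102, 201$ can be realised, and the extension lies in $\mathcal{E}^{(1)}_{n+1,i}$ with label $(i)_{e^{(1)}}$. If instead $a$ has label $(\ell)_{e^{(1)}}$ with $m := \max(a)$, pick any occurrence of $m$ at position $j < n$; the triple at positions $(j, n, n+1)$ has values $(m, \ell, i)$, which reduces to $100$ when $i = \ell$, to $101$ when $i = m$, to $102$ when $i > m$, and to $201$ when $\ell < i < m$. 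Every $i \geq \ell$ is therefore forbidden, while appending $i < \ell$ leaves the factorisation intact with $q_3$ replaced by $q_3 \circ (i)$, still strictly decreasing below the unchanged $\premax$, giving label $(i)_{e^{(1)}}$. Together with the root $(0,0)_a$ assigned to the empty sequence, this accounts for every $i \in [0,n]$ exactly once in each case.

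The main obstacle is the pattern-avoidance check for the $(n,h)_a \leadsto (i)_{e^{(1)}}$ transition, which requires ruling out all four forbidden patterns simultaneously. The short monotonicity argument above handles them uniformly, but the written proof will want to either walk through each case or invoke the structural summary carefully, to be completely rigorous.
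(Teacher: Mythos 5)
Your proposal is correct and takes essentially the same route as the paper: the paper justifies this rule by the structural observation that a $(100,101,102,201)$-avoider repeats or increases up to its maximum and then strictly decreases, and reads the transitions off that shape; your two-stage argument (the dichotomy of the class into $\cl{A}_n \sqcup \cl{E}^{(1)}_n$, followed by case-checking each appended value $i \in [0,n]$) is a more rigorous write-up of exactly this, and your pattern checks for both transition types are sound.

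One intermediate claim is false, although it does not end up damaging the argument. You assert that $101$-avoidance forces $q_2$ to ``reduce to a block of $\max$'s''. This is not so: $q_2$ may end with a single occurrence of $\premax(a)$, since the pattern $101 = \bigl(\max(a),\premax(a),\max(a)\bigr)$ only arises when some $\max(a)$ occurs \emph{after} the $\premax(a)$. For example, $(0,1,2,1)$ avoids all four patterns and has $q_2 = (2,1)$ with $q_3$ empty. For the same reason your parenthetical identification of ``non-decreasing'' with ``empty $q_3$'' is off: a sequence whose $q_2$ ends in $\premax(a)$ has empty $q_3$ yet ends strictly below its maximum, so it belongs to $\cl{E}^{(1)}$, not $\cl{A}$. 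The dichotomy you state next --- every sequence in the class is either non-decreasing or unimodal with a strictly decreasing tail ending strictly below $\max(a)$ --- is nevertheless true, because it follows from the corrected claim: the segment after the final maximum is $\premax(a)$ (possibly) followed by $q_3$, which is strictly decreasing. It is this dichotomy, not the false claim about $q_2$, that your succession checks actually rely on; indeed your own transition $(n,h)_a \leadsto (i)_{e^{(1)}}$ manufactures precisely such sequences with $q_2$ ending in the premaximum. So the proof stands once that sentence is repaired.
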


We now describe $(100, 102, 201)$-avoiding inversion sequences which \textit{do} contain $101$. Recall that a $101$ pattern must take the form $\left(\max(a), \premax(a), \max(a)\right)$, which means it occurs entirely within the factor $q_2$ and occurs precisely once. It will be convenient to categorise inversion sequences according to \textit{how much} of the $101$ pattern they contain. Define the following sets:

\begin{itemize}
    \item $\cl{B}_{n, k} := \{a \in \cl{I}_n(10)\; |\; k \in [\premax(a),\; \max(a)-1]\}$: sequences where only the 1 in a 101 pattern has occurred. The value $k$ will represent the 0 of a 101 pattern, which is yet to occur. These sequences are also non-decreasing.
    \item $\cl{C}_{n, k} := \{(a_1,\dots,a_{n-1}) \circ (k) \;|\; (a_1,\dots,a_{n-1}) \in \cl{B}_{n-1, k} \}$: sequences where only the 10 in a 101 pattern has occurred.
    \item $\cl{D}_{n, k} := \{a \in \cl{I}_n(100, 102, 201) \;|\;  k = \premax(a) < a_n \;\text{and}\;a \;\text{contains}\; 101 \}$: sequences which contain a 101 pattern, but for which the factor $q_3$ is empty.
    \item $\cl{E}_{n, \ell}^{(2)} := \{a \in \cl{I}_n(100, 102, 201) \;|\; \ell = a_n < \premax(a) \;\text{and}\; a \;\text{contains}\; 101 \}$: sequences which contain a 101 pattern and a non-empty factor $q_3$.
    \item $\cl{E}_{n,\ell} = \cl{E}_{n, \ell}^{(1)} \cup \cl{E}_{n, \ell}^{(2)}$.
\end{itemize}

An inversion sequence which avoids $(100, 102, 201)$ but contains $101$ must progress initially from an element of $\cl{A}$ to elements of $\cl{B}$, $\cl{C}$, then $\cl{D}$. It may then also progress to a sequence in $\cl{E}^{(2)}$. This progression is shown in \cref{fig:prog_1176}.

It is important to note here that the sets $\cl{B}$ and $\cl{C}$ are \textit{not disjoint} from the sets $\cl{A}$ and $\cl{E}$. Elements of $\cl{B}_{n, k}$ also occur in $\cl{A}_{n, h}$ for $h > k$, and elements of $\cl{C}_{n, k}$ occur in $\cl{E}^{(1)}_{n, k}$. For a thorough discussion of the purpose of these seemingly redundant objects, see \cite{testart_completing_2024} after Remark 12 where they are termed ``phantom objects''.

We explain via the following: consider an inversion sequence $a$ occurring in both $\cl{A}$ and $\cl{B}$ with $\premax(a)=k$ and $\max(a)=h$; it is weakly increasing. This inversion sequence may continue to increase, in which case its maximum value and length must be known to allow for all possible ascents. It may instead decrease to a value $j \in [0, h-1]$. If $j<k$ then this inversion sequence may not then return to $h$, since $(k, j, h)$ is a 102 pattern. If $j\ge k$, no such restriction applies. We must know $k$ to determine if the inversion sequence is restricted or not, but additionally tracking $k$ complicates the upcoming succession rule and functional equations. To avoid this, whenever $a$ increases from $k$ to $h$, we make two copies of $a$; one which we permit only to ascend or to repeat until eventually ascending (landing in $\cl{A}$), and one which we permit only to descend to $k$ or to repeat until eventually descending to $k$ (landing in $\cl{C}$). The latter (an element of $\cl{B}$) no longer needs to track the maximum $h$, since future ascents are restricted exclusively to $h$ and no other value. We use these phantom objects to simplify the succession rule, but ensure the inversion sequences they represent are only actually counted once during enumeration (duplicates do not get counted).

\begin{figure}
\centering
\begin{subfigure}[t]{0.45\textwidth}
    \resizebox{\textwidth}{!}{%A (102, 201)-avoiding sequence in set \cl{C}

\begin{tikzpicture}[mnode/.style={circle,draw=black,fill=black,inner sep=0pt,minimum size=4pt}]
            \draw[step=0.5cm, gray,thin] (-0.5,0) grid (8.4,2.9);
            \draw (0,0) node[mnode]{}
                \foreach \x/\y in {0/0, 0.5/0.5, 1/0.5, 1.5/1, 2/1, 2.5/2.5, 3/2.5, 3.5/2.5}{
            -- (\x,\y) node[mnode]{}
            };
            \draw[->] (-0.5, 0) -- (8.5, 0);
            \draw[->] (-0.5, 0) -- (-0.5, 3);
            
\end{tikzpicture}}
    \caption{A sequence in which the 1 of a 101 pattern has occurred; it is an element of $\cl{B}_{8, 4}$.}
\end{subfigure}
\hspace{4pt}
\begin{subfigure}[t]{0.45\textwidth}
    \resizebox{\textwidth}{!}{%A (102, 201)-avoiding sequence in set \cl{C}

\begin{tikzpicture}[mnode/.style={circle,draw=black,fill=black,inner sep=0pt,minimum size=4pt}]
            \draw[step=0.5cm, gray,thin] (-0.5,0) grid (8.4,2.9);
            \draw (0,0) node[mnode]{}
                \foreach \x/\y in {0/0, 0.5/0.5, 1/0.5, 1.5/1, 2/1, 2.5/2.5, 3/2.5, 3.5/2.5, 4/2}{
            -- (\x,\y) node[mnode]{}
            };
            \draw[->] (-0.5, 0) -- (8.5, 0);
            \draw[->] (-0.5, 0) -- (-0.5, 3);
            
\end{tikzpicture}}
    \caption{A sequence in which the 10 of a 101 pattern has occurred; it is an element of $\cl{C}_{9, 4}$.}
\end{subfigure}

\vspace{8pt}
\begin{subfigure}[t]{0.45\textwidth}
    \resizebox{\textwidth}{!}{%A (102, 201)-avoiding sequence in set \cl{D}

\begin{tikzpicture}[mnode/.style={circle,draw=black,fill=black,inner sep=0pt,minimum size=4pt}]
            \draw[step=0.5cm, gray,thin] (-0.5,0) grid (8.4,2.9);
            \draw (0,0) node[mnode]{}
                \foreach \x/\y in {0/0, 0.5/0.5, 1/0.5, 1.5/1, 2/1, 2.5/2.5, 3/2.5, 3.5/2.5, 4/2, 4.5/2.5, 5/2.5, 5.5/2.5, 6/2.5}{
            -- (\x,\y) node[mnode]{}
            };
            \draw[->] (-0.5, 0) -- (8.5, 0);
            \draw[->] (-0.5, 0) -- (-0.5, 3);
            
\end{tikzpicture}}
    \caption{A sequence containing the pattern 101 which has not yet descended below its second-greatest value; it is an element of $\cl{D}_{13, 4}$.}
\end{subfigure}
\hspace{4pt}
\begin{subfigure}[t]{0.45\textwidth}
    \resizebox{\textwidth}{!}{%General (102, 201)-avoiding sequence in set \cl{E}

\begin{tikzpicture}[mnode/.style={circle,draw=black,fill=black,inner sep=0pt,minimum size=4pt}]
            \draw[step=0.5cm, gray,thin] (-0.5,0) grid (8.4,2.9);
            \draw (0,0) node[mnode]{}
                \foreach \x/\y in {0/0, 0.5/0.5, 1/0.5, 1.5/1, 2/1, 2.5/2.5, 3/2.5, 3.5/2.5, 4/2, 4.5/2.5, 5/2.5, 5.5/2.5, 6/2.5, 6.5/1.5, 7/1, 7.5/0.5, 8/0}{
            -- (\x,\y) node[mnode]{}
            };
            \draw[->] (-0.5, 0) -- (8.5, 0);
            \draw[->] (-0.5, 0) -- (-0.5, 3);
            
\end{tikzpicture}}
    \caption{A sequence containing the pattern 101 which has descended below its second-greatest value; it is an element of $\cl{E}^{(2)}_{17, 0}$.}
\end{subfigure}
\caption{The progression of a (100, 102, 201)-avoiding inversion sequence growing on the right, as it graduates through the sets $\cl{B}$, then $\cl{C}$, then $\cl{D}$, and landing finally in $\cl{E}^{(2)}$.}
\label{fig:prog_1176}
\end{figure}

\begin{prop}\label{prop:rule_1176}
Give the following labels to the corresponding sequences:
\begin{itemize}
    \item $(n, h)_a$ to each sequence in $\cl{A}_{n, h},$
    \item $(k)_b$ to each sequence in $\cl{B}_{n, k},$
    \item $(k)_{c}$ to each sequence in $\cl{C}_{n, k},$
    \item $(k)_{d}$ to each sequence in $\cl{D}_{n, k},$
    \item $(\ell)_{e}$ to each sequence in $\cl{E}_{n, \ell}.$ 
\end{itemize}
Then a succession rule which generates all inversion sequences in $\cl{I}(100, 102, 201) \equiv \cl{I}(>,\leq,\neq)$ is as follows:

\begin{equation}\Omega_{(>,\leq,\neq)}:\left\{\begin{aligned}
(0, 0)_a \\
(n, h)_a &\leadsto (n+1, i)_a  &&\text{for}\; i \in [h, n] \\
    &\leadsto (i)_b^{n-i} &&\text{for}\; i \in [h, n-1] \\
    &\leadsto (i)_{e} &&\text{for}\; i \in [0, h-1] \\[1ex]
(k)_b &\leadsto (k)_b & \\
    &\leadsto (k)_{c} & \\[1ex]
(k)_{c} &\leadsto (k)_{d} & \\[1ex]
(k)_{d} &\leadsto (k)_{d} & \\
    &\leadsto (i)_{e} &&\text{for}\; i \in [0, k-1] \\[1ex]
(\ell)_{e} &\leadsto (i)_{e} &&\text{for}\; i \in [0, \ell-1].
\end{aligned}\right.
\label{rule:100-102-201}
\end{equation}
\end{prop}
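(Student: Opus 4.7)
The plan is to verify succession rule \eqref{rule:100-102-201} by splitting $\cl{I}(100,102,201)$ into those inversion sequences that additionally avoid $101$ (which are necessarily unimodal, by the remark preceding the proposition) and those that contain an occurrence of $101$ (which by the adapted factorisation must be unique and of the form $(\max(a),\premax(a),\max(a))$ sitting entirely inside the factor $q_2$). For each class I would check that every right-extension $a\mapsto(a_1,\ldots,a_n,j)$ preserving avoidance of $100$, $102$ and $201$ is captured by exactly one arrow $\leadsto$ in the rule, and conversely that no spurious arrows are included.

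For the $101$-avoiding piece the rule essentially transcribes \eqref{eqn:rule100-101-102-201}: from an $(n,h)_a$ label we may append any $j\in[h,n]$ to remain non-decreasing, or any $j<h$ to begin the strictly descending suffix of a unimodal sequence (yielding an $(j)_{e^{(1)}}$ label, absorbed into $(j)_e$), and from $(\ell)_{e^{(1)}}$ only strictly smaller values are permissible by $100$-avoidance. For the $101$-containing piece, the sets $\cl{B},\cl{C},\cl{D},\cl{E}^{(2)}$ encode four phases of construction: (i) the non-decreasing prefix together with a designated value $k$ earmarked for the eventual ``$0$'' of the $101$, (ii) the same prefix with $k$ actually appended, (iii) the second occurrence of $\max(a)$ laid down but the final descent not yet begun, and (iv) the final strict descent in progress. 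Verifying each arrow out of $\cl{B},\cl{C},\cl{D},\cl{E}$ reduces to a short case analysis: for instance, from $(k)_d$ one may repeat the current maximum (staying in $\cl{D}$) or drop to any value $<k$ (entering $\cl{E}$ with strict descent thereafter forced by $100$-avoidance), whereas intermediate values in $(k,\max)$ would create a forbidden $201$ or $100$; the analyses for $(k)_b$, $(k)_c$ and $(\ell)_e$ are entirely analogous and in fact simpler.

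The principal obstacle is justifying the multiplicity $n-i$ in the transition $(n,h)_a\leadsto(i)_b^{\,n-i}$. Because $\cl{B}_{n+1,i}\subseteq\cl{A}_{n+1}$ and $\cl{C}_{n+1,k}\subseteq\cl{E}^{(1)}_{n+1,k}$, each underlying non-decreasing inversion sequence is simultaneously represented by one $(n+1,i)_a$ label and by several auxiliary $(i)_b$ labels corresponding to the possible future realisations of a $101$ pattern anchored at $i$; the multiplicity must be tuned precisely so that propagating these auxiliary labels through $\cl{B}\to\cl{C}\to\cl{D}\to\cl{E}^{(2)}$ reproduces each $101$-containing $(100,102,201)$-avoider exactly once at its final length. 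I would establish $n-i$ by setting up an explicit bijection between the $(i)_b$ labels produced at depth $n+1$ and the pairs consisting of a non-decreasing inversion sequence in $\cl{A}_{n+1}$ admitting $i$ as a future premaximum together with a choice of the position at which the eventual descent to $k=i$ will occur, following the bookkeeping technique Testart used in \cite{testart_completing_2024} for $\cl{I}(102,201)$. Once this bijection is in hand, the correctness of the full rule follows by routine induction on depth.
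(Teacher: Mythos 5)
Your overall skeleton -- splitting on containment of $101$, reading $\cl{B}\to\cl{C}\to\cl{D}\to\cl{E}^{(2)}$ as four phases, and isolating the multiplicity $n-i$ as the crux -- matches the construction the paper describes (the paper in fact states \cref{prop:rule_1176} with no formal proof, relying on this narrative and deferring the bookkeeping of the redundant objects to Testart). However, your resolution of the step you yourself call the principal obstacle does not work. The $n-i$ copies of $(i)_b$ emitted by $(n,h)_a$ are not in bijection with ``choices of the position at which the eventual descent to $k=i$ will occur'': that descent can be postponed arbitrarily long via the loop $(k)_b \leadsto (k)_b$, so the set of possible future positions is unbounded and cannot account for a finite multiplicity. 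What the multiplicity actually counts is the \emph{value of the newly appended maximum}. From $a \in \cl{A}_{n,h}$, the children entering $\cl{B}_{n+1,i}$ whose last entry is a strictly new maximum are exactly $a \circ j$ for $j \in [i+1,n]$ (one needs $j \ge i+1$ so that $i \le \max-1$, and then $\premax(a\circ j)=h\le i$ automatically): these are $n-i$ \emph{distinct sequences} which all receive the \emph{same} label $(i)_b$, because the label forgets the maximum. Together with the observation that the elements of $\cl{B}_{n+1,i}$ whose last entry repeats the maximum correspond bijectively to $\cl{B}_{n,i}$ (the arrow $(i)_b\leadsto(i)_b$), this gives the recursion $\beta_{n+1,i}=\beta_{n,i}+(n-i)\sum_{h\le i}\alpha_{n,h}$, which is exactly what the rule encodes. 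Note also that for a fixed sequence and fixed $i$ there is exactly \emph{one} $(i)_b$ label, not ``several''; a single sequence carries several $b$-labels only as $i$ ranges over $[\premax,\max-1]$.

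Second, your opening verification criterion -- ``every right-extension preserving avoidance of $100$, $102$, $201$ is captured by exactly one arrow, and no spurious arrows are included'' -- is the criterion for a bijective generating tree, and it fails here precisely because of the phantom labels. A sequence in $\cl{C}_{n,k}$ also lies in $\cl{E}^{(1)}_{n,k}$; appending any $j<k$ to it preserves avoidance of $(100,102,201)$, yet this is not an arrow out of $(k)_c$ -- it is an arrow out of the $(k)_e$ label carried by the same sequence. Symmetrically, appending the maximum to that sequence is legal (it creates the $101$ and lands in $\cl{D}$) but is not an arrow out of $(k)_e$. So a label-by-label check of your stated criterion would wrongly conclude that the rule is missing arrows. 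The statement you actually need to prove by induction on depth is type-aware: sequences in $\cl{A}$, $\cl{D}$ and $\cl{E}=\cl{E}^{(1)}\cup\cl{E}^{(2)}$ are each produced exactly once with a label of the corresponding type (these are the types summed in $F=A+D+E$), while $b$- and $c$-labels are in bijection not with pattern-avoiding sequences but with pairs (non-decreasing sequence, designated value $k\in[\premax,\max-1]$), respectively such pairs with the $k$ appended. Once the claim is phrased this way, your case analyses for $d$ and $e$ are fine and the induction does close up; as written, though, both the criterion and the bijection behind the multiplicity are genuinely incorrect.
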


We now obtain the generating function for $\cl{I}(100, 102, 201)$. Let $\alpha_{n,h}$ (resp.\ $\beta_{n,k}$, $\gamma_{n,k}$, $\delta_{n,k}$ and $\veps_{n,\ell}$) be the counting sequences for the sets $\cl{A}_{n,h}$ (resp.\ $\cl{B}_{n,k}$, $\cl{C}_{n,k}$, $\cl{D}_{n,k}$ and $\cl{E}_{n,\ell}$), and then define the generating functions $A(z,x)$ (resp.\ $B(z,x)$, $C(z,x)$, $D(z,x)$ and $E(z,x)$) with $z$ marking the length $n$ and $x$ marking the second parameter $h,k$ or $\ell$ appropriately.
% \begin{align*}
% A(z, x) &= \sum_{n, h \geq 0} \alpha_{n, h} z^n x^h \\
% B(z, x) &= \sum_{n, k \geq 0} \beta_{n, k} z^n x^k \\
% C(z, x) &= \sum_{n, k \geq 0} \gamma_{n, k} z^n x^k \\
% D(z, x) &= \sum_{n, k \geq 0} \delta_{n, k} z^n x^k \\
% E(z, x) &= \sum_{n, \ell \geq 0} \veps_{n, \ell} z^n x^\ell.
% \end{align*}
\begin{lem}
\label{eqs100-102-201}
The generating functions $A, B, C, D, E$ and $F(z) = \sum_{n=0}^\infty |\cl{I}_n(>, \le, \ne)|z^n$ satisfy the following equations:
\begin{align}
A(z, x) &= 1 + \frac{z}{1-x}\left(A(z, x) - xA(zx, 1)\right) \\
B(z, x) &= zB(z, x) + \frac{z}{1-x}\left(z\parz A(z, x) - x\parx A(z, x) + \frac{x}{1-x}\Big(A(zx, 1) - A(z, x)\Big)\right) \\
C(z, x) &= zB(z, x) \\
D(z, x) &= zD(z, x) + zC(z, x) \\
E(z, x) &= \frac{z}{1-x}\Big(E(z, 1) - E(z, x) + A(z, 1) - A(z, x) + D(z, 1) - D(z, x)\Big) \\
F(z) &= A(z, 1) + D(z, 1) + E(z, 1).
\end{align}
\end{lem}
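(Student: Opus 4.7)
The plan is to translate each arrow of the succession rule $\Omega_{(>,\leq,\neq)}$ into a recurrence on the counting coefficients $\alpha_{n,h}, \beta_{n,k}, \gamma_{n,k}, \delta_{n,k}, \veps_{n,\ell}$, and then multiply by $z^{n+1}x^i$ and sum to recover each stated functional equation. I would handle the easier cases first and the $B$ equation (which I expect to be the main obstacle) last.

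First, the single-child successions yield $C$ and $D$ almost immediately: $(k)_b \leadsto (k)_c$ gives $\gamma_{n+1,k} = \beta_{n,k}$ and hence $C = zB$, while $(k)_c, (k)_d \leadsto (k)_d$ give $\delta_{n+1,k} = \gamma_{n,k} + \delta_{n,k}$ and hence $D = zC + zD$. Next, the rule $(n,h)_a \leadsto (n+1,i)_a$ for $i \in [h,n]$ translates to $\alpha_{n+1,i} = \sum_{h \le i}\alpha_{n,h}$; exchanging summations and using $\sum_{i=h}^{n} x^i = (x^h - x^{n+1})/(1-x)$ produces the principal part $zA(z,x)/(1-x)$ and a boundary correction $-zxA(zx,1)/(1-x)$ via the substitution identity $\sum_{n,h}\alpha_{n,h}z^n x^n = A(zx,1)$, plus the root contribution $1$. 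For $E$, the three arrows into $(i)_e$ give $\veps_{n+1,i} = \sum_{h > i}\alpha_{n,h} + \sum_{k > i}\delta_{n,k} + \sum_{\ell > i}\veps_{n,\ell}$, and the telescoping identity $\sum_{i \ge 0}x^i \sum_{j > i}f_j = (F(1) - F(x))/(1-x)$ applied to each piece produces the three corresponding differences.

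The main obstacle is the $B$ equation, because of the multiplicity $(n-i)$ appearing in the succession $(n,h)_a \leadsto (i)_b^{n-i}$. Combined with the fixed point $(k)_b \leadsto (k)_b$, this gives $\beta_{n+1,i} = \beta_{n,i} + (n-i)\sum_{h \le i}\alpha_{n,h}$ for $i \le n-1$. After multiplying by $z^{n+1}x^i$ and exchanging sums, the key identity I would establish is
\[
    \sum_{i=h}^{n-1}(n-i)x^i = \frac{(n-h)x^h}{1-x} + \frac{x(x^n - x^h)}{(1-x)^2},
\]
obtained by writing the finite sum as $\sum_{i \ge h}(n-i)x^i - \sum_{i \ge n}(n-i)x^i$ and evaluating each piece as an arithmetico-geometric series. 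Summing against $\alpha_{n,h}z^n$, the first term gives $(z\parz A - x\parx A)/(1-x)$ via the operator identities $\sum n\alpha_{n,h}z^n x^h = z\parz A$ and $\sum h\alpha_{n,h}z^n x^h = x\parx A$, while the second assembles into $x(A(zx,1) - A(z,x))/(1-x)^2$ using $\sum \alpha_{n,h}(zx)^n = A(zx,1)$. Transposing the $zB(z,x)$ term recovers the claim. Careful handling of the truncation $i \le n-1$ is essential: without it the multiplicity $(n-i)$ would become negative for $i \ge n+1$, and it is precisely this truncation that produces the catalytic substitution $A(zx,1)$.

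Finally, the identity $F(z) = A(z,1) + D(z,1) + E(z,1)$ follows because $\cl{A}, \cl{D}, \cl{E}^{(1)}, \cl{E}^{(2)}$ partition $\cl{I}(100,102,201)$, while $\cl{B}$ and $\cl{C}$ are intermediate labels already contained in $\cl{A}$ and $\cl{E}^{(1)}$ respectively and must be excluded to avoid double-counting. Using the factorisation $a = p_1 \circ q_2 \circ q_3$ from the discussion preceding \cref{prop:rule_1176}, any sequence avoiding $(100,102,201)$ falls into exactly one of these four classes according to whether it contains $101$ (classes $\cl{D}, \cl{E}^{(2)}$) or not (classes $\cl{A}, \cl{E}^{(1)}$) and whether the tail $q_3$ is empty ($\cl{A}, \cl{D}$) or not ($\cl{E}^{(1)}, \cl{E}^{(2)}$).
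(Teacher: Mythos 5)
Your proposal is correct and follows essentially the same route as the paper: both translate each succession of $\Omega_{(>,\leq,\neq)}$ into generating-function identities, with the same key evaluation $\sum_{i=h}^{n-1}(n-i)x^i = \frac{(n-h)x^h}{1-x} + \frac{x(x^n-x^h)}{(1-x)^2}$ handling the multiplicity in the $B$ equation, and the same observation that $\cl{B}$, $\cl{C}$ are phantom labels so that $F = A(z,1)+D(z,1)+E(z,1)$. The only difference is presentational: you phrase the argument via coefficient recurrences $\alpha_{n,h},\dots,\veps_{n,\ell}$ before summing, whereas the paper writes the generating-function sums directly, and your justification of the four-way partition (contains $101$ or not, tail $q_3$ empty or not) is a slightly more explicit version of the paper's closing remark.
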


\begin{proof} 
We convert the succession rule $\Omega_{(>,\leq,\neq)}$ into functional equations:
\begin{align*}
    A(z, x) &= 1 + \sum_{n, h \ge 0}\alpha_{n, h} z^{n+1} \sum_{i=h}^n x^i \\
    &= 1 + \frac{z}{1-x}\left(A(z, x) - xA(zx, 1)\right) \\[2ex]
% \end{align*}
% \begin{align*}
    B(z, x) &= \left(\sum_{n, k \geq 0}b_{n, k}z^{n+1}x^k \right) + \left(\sum_{n, h \geq 0}a_{n, h}z^{n+1}\sum_{i=h}^{n-1}(n-i)x^i \right) \\
    &= z\left(\sum_{n, k \geq 0}b_{n, k} z^n x^k \right) + z\left(\sum_{n, h \geq 0}a_{n, h}z^n\frac{1}{1-x}\left(nx^h - hx^h + \frac{x}{1-x}(x^n - x^h) \right) \right) \notag\\
    &= zB(z, x) + \frac{z}{1-x}\left(z\parz A(z, x) - x\parx A(z, x) + \frac{x}{1-x}\Big(A(zx, 1) - A(z, x) \Big) \right).\\[2ex]
% \end{align*}
% \begin{align*}
    C(z, x) &= \sum_{n, k \geq 0} \beta_{n, k} z^{n+1} x^h \\
    &= zB(z, x).\\[2ex]
% \end{align*}
% \begin{align*}
    D(z, x) &= \left(\sum_{n, k \geq 0} \delta_{n, k} z^{n+1} x^h \right) + \left(\sum_{n, k \geq 0} \gamma_{n, k} z^{n+1} x^h \right)\\
    &= zD(z, x) + zC(z, x).\\[2ex]
% \end{align*}
% \begin{align*}
    E(z, x) &= \left(\sum_{n, \ell \geq 0}\veps_{n, \ell}z^{n+1}\sum_{i=0}^{\ell-1}x^i \right) + \left(\sum_{n, h \geq 0}\alpha_{n, h}z^{n+1}\sum_{i=0}^{h-1}x^i \right) + \left(\sum_{n, k \geq 0}\delta_{n, k}z^{n+1}\sum_{i=0}^{k-1}x^i \right) \\
    &= \frac{z}{1-x}\Big(E(z, 1) - E(z, x) + A(z, 1) - A(z, x) + D(z, 1) - D(z, x)\Big).
\end{align*}
The expression $F(z) = A(z, 1) + D(z, 1) + E(z, 1)$ is a direct consequence of the construction: the inversion sequences of interest either avoid the pattern 101 (and so are included in $\cl{A}$ or $\cl{E}^{(1)}$) or they contain 101 (and are included in $\cl{D}$ or $\cl{E}^{(2)}$).
\end{proof}

\begin{thm}
\label{gf100,102,201}
The generating function for inversion sequences avoiding the set of patterns $(100, 102, 201) \equiv (>, \leq, \neq)$ is given by
\begin{align}
    F(z) &= \frac{2 + z - 10z^2 + 4z^3 - (2 - 3z)\sqrt{1 - 4z - 4z^2}}{8z(1-z)^2} \\
    &= 1 + z + 2 z^2 + 6 z^3 + 21 z^4 + 78 z^5 + 299 z^6 + 1176 z^7 + 
 4729 z^8 + 19378 z^9 + 80667 z^{10} + \dots \notag
\end{align}
\end{thm}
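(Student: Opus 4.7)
The plan is to solve the system from \cref{eqs100-102-201} by iterated kernel-method steps, working through the five generating functions in order, and then combine them.

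First I would tackle the $A$-equation by rewriting it as $(1-x-z)A(z,x) = (1-x) - zx\,A(zx,1)$. The kernel $1-x-z=0$ selects $x = 1-z$; requiring the right-hand side to vanish there gives $A(z(1-z),1) = \tfrac{1}{1-z}$. Substituting $u = z(1-z)$ and inverting yields the Catalan generating function $A(z,1) = \tfrac{1-\sqrt{1-4z}}{2z}$, and back-substitution in the original equation gives the closed form $A(z,x) = \tfrac{1-2x+\sqrt{1-4zx}}{2(1-x-z)}$.

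Second, with $A(z,x)$ explicit, the $B$-equation becomes an explicit expression that can be simplified. Computing $z\,\partial_z A - x\,\partial_x A$ and $\tfrac{x}{1-x}(A(zx,1)-A(z,x))$ over the common denominator $2(1-x-z)^2$, the rational and radical parts both contain a factor of $(1-x)$ which cancels against the $\tfrac{z}{1-x}$ outside; after routine simplification this yields
\[ B(z,x) = \frac{(1-2z-2zx+2z^2)-(1-2z)\sqrt{1-4zx}}{2(1-z)(1-x-z)^2}. \]
The trivial equations $C(z,x) = zB(z,x)$ and $D(z,x) = \tfrac{z^2}{1-z}B(z,x)$ then provide closed forms for $C$ and $D$; in particular $D(z,1)$ is rational in $z$ and $\sqrt{1-4z}$.

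Third, I would apply a second kernel step to the $E$-equation. Rewriting it as $(1-x+z)E(z,x) = z\bigl(E(z,1)+A(z,1)-A(z,x)+D(z,1)-D(z,x)\bigr)$, the factor $1-x+z$ vanishes at $x=1+z$; since $E(z,1+z)$ is a well-defined formal power series, the bracket must vanish there, giving
\[ E(z,1) = A(z,1+z) - A(z,1) + D(z,1+z) - D(z,1), \]
and each term on the right is now available in closed form.

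The final step is to substitute into $F(z) = A(z,1) + D(z,1) + E(z,1)$. The main obstacle is purely algebraic: the assembled expression a priori involves both $\sqrt{1-4z}$ and $\sqrt{1-4z-4z^2}$, whereas the claimed answer contains only the latter. I would track the coefficients of $\sqrt{1-4z}$ from each of the four contributing pieces and verify that they cancel in pairs (the contributions from $A(z,1)$ and from $A(z,1+z)-A(z,1)$ cancel, and similarly for the two $D$ contributions). The coefficient of $\sqrt{1-4z-4z^2}$ then comes solely from $A(z,1+z)$ and $D(z,1+z)$; placing these over the common denominator $8z(1-z)^2$ collapses to $-(2-3z)$. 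Finally, combining the rational parts over the same denominator and expanding gives numerator $2+z-10z^2+4z^3$, producing the stated closed form.
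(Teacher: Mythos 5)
Your proposal is correct and follows essentially the same route as the paper's proof: take the closed form of $A(z,x)$, compute $B$, $C$, $D$ explicitly, apply the kernel method to the $E$-equation with kernel root $x=1+z$ to get $E(z,1)=A(z,1+z)-A(z,1)+D(z,1+z)-D(z,1)$, and assemble $F(z)=A(z,1)+D(z,1)+E(z,1)$, which (as you observe) collapses to $A(z,1+z)+D(z,1+z)$ so that only $\sqrt{1-4z-4z^2}$ survives. The only cosmetic differences are that you re-derive $A(z,x)$ by a kernel step where the paper cites \cite{testart_completing_2024}, and your $B(z,x)$ is the paper's with the common factor $(1-x)$ cancelled.
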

\begin{proof} We solve the functional equations for $A, D,$ and $E$ to obtain $F(z) = A(z, 1) + D(z, 1) + E(z, 1)$. The explicit form of $A(z, x)$ is derived in \cite{testart_completing_2024}:
\begin{align}\label{eqn:soln_A_1176}
    A(z, x) = \frac{1 - 2x + \sqrt{1 - 4zx}}{2(1 - x - z)}.
\end{align}
We find $B(z, x)$, $C(z, x)$ and $D(z, x)$ using \textsc{Mathematica} to assist with the computations:
\begin{align}
    B(z, x) &= \frac{1 - x(1 - 2zx + 2z^2) - 2z(1 - z) - (1 - x)(1 - 2z)\sqrt{1 - 4zx}}{2(1-x)(1-z)(1-x-z)^2} \label{eqn:soln_B_1176}\\
    C(z, x) &= \frac{z(1 - x(1 - 2zx + 2z^2) - 2z(1 - z) - (1 - x)(1 - 2z)\sqrt{1 - 4zx})}{2(1-x)(1-z)(1-x-z)^2} \label{eqn:soln_C_1176}\\
    D(z, x) &= \frac{z^2(1 - x(1 - 2zx + 2z^2) - 2z(1 - z) - (1 - x)(1 - 2z)\sqrt{1 - 4zx})}{2(1-x)(1-z)^2(1-x-z)^2}.\label{eqn:soln_D_1176}
\end{align}
Obtaining an expression for $E(z, 1)$ requires the kernel method. From \cref{eqs100-102-201}:
\[E(z, x) = \frac{z}{1-x}\Big(E(z, 1) - E(z, x) + A(z, 1) - A(z, x) + D(z, 1) - D(z, x)\Big)\]
which becomes
\begin{align}
    (1 - x + z)E(z, x) = z\Big(E(z, 1) + A(z, 1) - A(z, x) + D(z, 1) - D(z, x)\Big).
\end{align}
We set $x = 1 + z$, and the right-hand side gives
\begin{align}
    0 = E(z, 1) + A(z, 1) - A(z, 1 + z) + D(z, 1) - D(z, 1 + z).
\end{align}
Solving for $E(z, 1)$ gives
\begin{align}
    E(z, 1) = \frac{-2 + 5z + 2z^2 - 4z^3 + 4(1 - z - z^2)\sqrt{1 - 4z} - (2 - 3z)\sqrt{1 - 4z - 4z^2}}{8z(1 - z)^2}.
\end{align}

Finally we obtain the stated expression for $F(z)$ by summing $A(z, 1), D(z, 1)$, and $E(z, 1)$.
\end{proof}

% \subsection{A279563, (102, 201, 210)}
\subsection{Class 1253: $(>, \neq, \neq) \equiv (102, 201, 210)$}
\label{ssec:1253}

The triple of relations $(>, \ne, \ne)$ corresponds to the set of patterns  $(102, 201, 210)$. We construct a succession rule growing on the right which is based on the succession rule $\Omega_{(>, \le, \ne)}$ in \eqref{rule:100-102-201}. From this rule we extract the corresponding functional equations and solve them to obtain the generating function for $\cl{I}(>, \ne, \ne)$.

Let $a \in \cl{I}(102, 201)$ be arbitrary, and recall the factorisation $a = p_1 \circ p_2 \circ p_3$ from \cref{ssec:1176}; we will modify it to describe inversion sequences which additionally avoid 210. If $p_2$ contains $\premax(a)$, then $p_3$ is required to be empty. If $p_2$ does not contain $\premax(a)$ (i.e.\ $\premax(a)$ occurs exclusively in $p_1$), then $p_3$ may contain any one value. The updated characterisation is as follows:

\begin{prop}
    An inversion sequence $a \in \cl{I}_n$ avoids the set of patterns $(102, 201, 210)$ if and only if it can be factored as either $p_1 \circ p_2$ or $p_1 \circ r_2$, where:
\begin{itemize}
    \item $p_1$ and $p_2$ are as in \cref{p1p2p3_102-201},
    % \item $p_2$ is a sequence of terms from $\{\max(a),\; \premax(a)\}$ beginning on the term $\max(a)$,
    \item $r_2 = (\max(a), \ldots, \max(a), k, \ldots, k)$, where $0 \leq k < \premax(a)$, and $\max(a)$ and $k$ each occur at least once.
\end{itemize}
\end{prop}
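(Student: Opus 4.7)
My plan is to use \cref{p1p2p3_102-201} as the backbone. For the forward direction, let $a\in\cl{I}_n$ avoid $(102,201,210)$; then $a$ also avoids $(102,201)$, so \cref{p1p2p3_102-201} gives a factorisation $a = p_1\circ p_2\circ p_3$. If $p_3$ is empty we already have $a=p_1\circ p_2$. Otherwise, two short arguments by contradiction exploit 210-avoidance. First, if $p_2$ contained some $\premax(a)$, then the leading $\max(a)$ of $p_2$, that $\premax(a)$, and any entry of $p_3$ (which is strictly below $\premax(a)$ by the lemma) would form a 210, so $p_2$ must consist entirely of $\max(a)$'s. Second, if $p_3$ were not constant, its non-increasingness would yield two entries in strictly decreasing order which, combined with any $\max(a)$ of $p_2$, would form a 210. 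Writing $k<\premax(a)$ for the common value of $p_3$, the last two factors concatenate into $r_2=(\max(a),\dots,\max(a),k,\dots,k)$ as required.

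For the backward direction, I observe that both claimed factorisations are specialisations of \cref{p1p2p3_102-201}: the form $p_1\circ p_2$ is the case $p_3=\emptyset$, while $p_1\circ r_2$ corresponds to $p_2$ being a run of $\max(a)$'s followed by a (constant, hence non-increasing) $p_3$ of value $k<\premax(a)$. Hence both avoid $(102,201)$ automatically, and it only remains to verify 210-avoidance. A 210 triple $i<j<k$ requires three strictly decreasing values, but $p_1$ is non-decreasing, every $p_1$-entry is strictly below $\max(a)$, the suffix (either $p_2$ or $r_2$) takes at most two distinct values, and in the $r_2$-case the smaller of those values ($k$) is only attained after the last $\max(a)$. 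A short case split on the placement of $(i,j,k)$ across the factors kills every configuration.

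The main obstacle is not conceptual but bookkeeping in the backward direction, where one must enumerate all distributions of the three 210-positions across the two or three factors. No sub-case is deep, but it is easy to overlook the ``mixed'' configurations where two of the positions straddle the boundary between the $\max(a)$-block and the $k$-block of $r_2$, or between $p_1$ and $r_2$; some care is therefore needed in organising the enumeration cleanly.
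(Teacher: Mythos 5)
Your proof is correct and follows essentially the same route as the paper: starting from the $(102,201)$ factorisation of \cref{p1p2p3_102-201} and using 210-avoidance to force either $p_3=\emptyset$ or ($p_2$ all $\max(a)$'s and $p_3$ constant). The paper in fact only sketches this forward direction informally and leaves the converse implicit, so your explicit (if routine) verification of the backward direction is a welcome addition rather than a divergence.
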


As before, we will first consider $(102, 201, 210)$-avoiding inversion sequences which also avoid $101$, then afterwards consider those which contain $101$. The pattern $101$ may occur only in the factor $p_2$. Accordingly, the $101$-avoiding inversion sequences will be unimodal with no more than one distinct value occurring after the maximum.

Recall the definition for $\cl{A}_{n, h}$ given in \cref{ssec:1176}. 
% along with $\cl{A} = \bigcup_{n, h \geq 0} \cl{A}_{n, h}$ and $\alpha_{n, h} = |\cl{A}_{n, h}|$.
Define the following set:
\begin{itemize}
    \item $\cl{E}_{n, \ell}^\dg = \{a \in \cl{I}_n(101, 102, 201, 210) \;|\; a_n = \ell \;\text{and}\; a_n < \max(a)\}$: the set of unimodal inversion sequences of length $n$ such that $\ell < \max(a)$ is the only distinct value which occurs after the final $\max(a)$.
\end{itemize}
%Let $\veps_{n, \ell}^{\dg} = |\cl{E}_{n, \ell}^\dg|$ and $\cl{E}^\dg = \bigcup_{n, \ell \geq 0} \cl{E}_{n, \ell}^\dg$.

We can then write a succession rule for inversion sequences avoiding the set of patterns \newline $(101, 102, 201, 210)$, where an element $a \in \cl{E}^{\dg}_{n, \ell}$ may progress only to the inversion sequence $a \circ \ell$.

\begin{prop}
Give the label $(n, h)_a$ to each sequence in $\cl{A}_{n, h}$, and give the label $(\ell)_{e^\dg}$ to each sequence in $\cl{E}_{n, \ell}^\dg$. Then we have the following succession rule for $\cl{I}_n(101, 102, 201, 210)$:

\begin{equation}
\Omega_{(101, 102, 201, 210)}: \left\{ \begin{aligned}
(0, 0)_a \\
(n, h)_a &\leadsto (n+1, i)_a &&\text{for}\; i \in [h, n] \\
        &\leadsto (i)_{e^\dg} &&\text{for}\; i \in [0, h-1] \\
(\ell)_{e^\dg} &\leadsto (\ell)_{e^\dg}.
\end{aligned} \right.
\end{equation}
\end{prop}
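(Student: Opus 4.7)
The plan is to verify in two stages that $\Omega_{(101,102,201,210)}$ is a valid generating tree for $\mcI(101,102,201,210)$: first, that the labels $\{(n,h)_a\} \cup \{(\ell)_{e^\dg}\}$ partition the class bijectively, and second, that the stated successions describe exactly the valid one-step right-extensions.

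For the partition, I would begin from the preceding characterisation of $\mcI(102,201,210)$ as factoring into $p_1 \circ p_2$ or $p_1 \circ r_2$. Adding 101-avoidance forces unimodality, since (as remarked earlier following \cite{testart_completing_2024}) any occurrence of 101 inside $\mcI(102,201)$ must take the shape $(\max(a),\premax(a),\max(a))$. Coupling unimodality with the 210 constraint further forces at most one distinct value strictly less than $\max(a)$ to appear after the final occurrence of $\max(a)$. Hence each nonempty $a$ is either non-decreasing (placed uniquely in $\cl{A}_{n,a_n}$) or has a descent and is placed uniquely in $\cl{E}^\dg_{n,a_n}$, giving the desired bijective labelling of the class.

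Next I would check successions from $(n,h)_a$. Appending $i \in [h,n]$ keeps the sequence non-decreasing and trivially avoids every pattern containing a descent, placing the extension in $\cl{A}_{n+1,i}$. Appending $i \in [0,h-1]$ creates a unimodal sequence whose unique post-maximum value is $i$, landing in $\cl{E}^\dg_{n+1,i}$. No new forbidden pattern can arise because every new three-term subsequence ends at position $n+1$ in the form $(a_j, a_k, i)$ with $j<k\le n$, and $a_j\le a_k$ (from $a$ being non-decreasing) simultaneously rules out 101, 102, 201 and 210. Since $[h,n] \cup [0,h-1] = [0,n]$, every legal appended value is accounted for.

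Finally, for successions from $(\ell)_{e^\dg}$, write $m := \max(a) > \ell$. Picking any position carrying $m$, the terminal $\ell$, and the appended value $i$ produces a forbidden pattern in every case except $i=\ell$: the reductions are $210$ if $i<\ell$, $201$ if $\ell<i<m$, $101$ if $i=m$, and $102$ if $i>m$. These four subranges exhaust $[0,n]\setminus\{\ell\}$ (using $m\le n-1$), so only $i=\ell$ is legal. Appending $\ell$ preserves unimodality and the single post-maximum value, so the extension lies in $\cl{E}^\dg_{n+1,\ell}$, confirming the lone succession $(\ell)_{e^\dg}\leadsto(\ell)_{e^\dg}$.

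The principal obstacle is the structural claim that adding 101- and 210-avoidance to the factorisation of $\mcI(102,201,210)$ really does collapse $p_2$ to a string of maxima and $r_2$ to maxima followed by copies of a single smaller value; once this is pinned down, the per-label case analysis is short and mechanical, and a minor bookkeeping point is to confirm that the appended-value ranges cover $[0,n]$ exactly in both the $\cl{A}$ and $\cl{E}^\dg$ situations.
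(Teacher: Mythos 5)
Your proof is correct and follows essentially the same route as the paper: you establish that $101$-avoidance within $\mcI(102,201,210)$ forces unimodality with at most one distinct value after the final maximum (which is exactly how the paper justifies the split into $\cl{A}_{n,h}$ and $\cl{E}^\dg_{n,\ell}$), and then the case analysis on the appended value $i$ — non-decreasing extensions into $\cl{A}$, descents into $\cl{E}^\dg$, and the observation that an element of $\cl{E}^\dg_{n,\ell}$ can only be extended by $\ell$ — reproduces the paper's (largely implicit) verification of the succession rule. Your write-up is in fact more explicit than the paper's, which states the structural facts and the rule without a formal proof.
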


We subsequently progress to counting inversion sequences in $\cl{I}_n(102, 201, 210)$ which \textit{contain} the pattern 101. As noted above, a sequence $a$ will only contain the pattern 101 if it can be factored as $a = p_1 \circ p_2$ and $p_2$ contains the pattern 101. We already have succession rules for some of these sequences from \cref{prop:rule_1176}: those elements of $\cl{B}$ in \cref{ssec:1176}. We must modify the sets $\cl{C}$ and $\cl{D}$ from \cref{ssec:1176} to allow repeated instances of the premaximum, since 100-avoidance is no longer necessary. Define
\begin{itemize}
    \item $\cl{C}_{n, k}^\dg := \{(a)_{n-i} \circ (k)^i\; |\; i \in [1,\; n-2]\; \text{and}\; (a)_{n-i} \in \cl{B}_{n-i, k} \}$: sequences where only the 10 in a 101 pattern has occurred. As such, they consist of a sequence containing only a 1 of the 101 pattern, concatenated with a string of $k$ to form the 0 of the 101 pattern.
    \item $\cl{D}_{n, k}^\dg := \{(a)_n \in \cl{I}_n(102, 201)\; |\; k = \premax(a) \leq a_n\; \text{and}\; (a)_n\;\text{contains}\; 101\}$: sequences which contain a 101 pattern, but for which the ``descending'' factor $p_3$ from the characterisation in \cref{p1p2p3_102-201} is empty.
\end{itemize}

We have no need of the set $\cl{E}^{(2)}$, since inversion sequences in $\cl{I}_n(102, 201, 210)$ which contain 101 do not have a $p_3$ factor. It was this factor which produced sequences in $\cl{E}^{(2)}$.

\begin{rem}
\label{rem_102,201,210_w/101}
Explicitly, an inversion sequence $a \in \cl{I}_n(102, 201, 210)$ contains the pattern 101 if and only if $a \in \cl{D}^{\dagger}_{n, k}$ for some $k \geq 0$.
\end{rem}

The successions involving elements of $\cl{C}^\dagger$ and $\cl{D}^\dagger$ are the same as those for $\cl{C}$ and $\cl{D}$ respectively in \cref{prop:rule_1176}, with the addition of $\cl{C}_{n, k}^\dg \to \cl{C}_{n+1, k}^\dg$ and a second instance of $\cl{D}_{n, k}^\dg \to \cl{D}_{n+1, k}^\dg$, both of which correspond to appending the premaximum of the sequence. Modifying the rule $\Omega_{(102, 201)}$ from \cite{testart_completing_2024} to exclude the successions $\cl{D} \to \cl{E}$ and $\cl{E}_{n, \ell} \to \cl{E}_{n+1, i}$ for $i < \ell$, we may write our full succession rule.

\begin{prop}\label{prop:rule_1253}
Give the following labels to the corresponding sequences:
\begin{itemize}
    % \item $(n, h)_a$ to each sequence in $\cl{A}_{n, h}$,
    % \item $(k)_b$ to each sequence in $\cl{B}_{n, k}$,
    \item $(k)_c^\dg$ to each sequence in $\cl{C}_{n, k}^\dg$,
    \item $(k)_d^\dg$ to each sequence in $\cl{D}_{n, k}^\dg$,
    \item $(\ell)_e^\dg$ to each sequence in $\cl{E}_{n, \ell}^\dg$,
\end{itemize}
with labels $(n,h)_a$ and $(k)_b$ as defined in \cref{prop:rule_1176}. Then a succession rule which generates all inversion sequences in $ \cl{I}(>, \ne, \ne) \equiv \cl{I}(102, 201, 210)$ is as follows:
\begin{equation}\Omega_{(>,\neq,\neq)}:\left\{\begin{aligned}
(0, 0)_a \\
(n, h)_a &\leadsto (n+1, i)_a && \text{for}\; i \in [h, n] \\
    &\leadsto (i)_b^{n-i} && \text{for}\; i \in [h, n-1] \\
    &\leadsto (i)_{e^\dg} && \text{for}\; i \in [0, h-1] \\[1ex]
(k)_b &\leadsto (k)_b \\
    &\leadsto (k)_{c^\dg} \\[1ex]
(k)_{c^\dg} &\leadsto (k)_{c^\dg} \\
    &\leadsto (k)_{d^\dg} \\[1ex]
(k)_{d^\dg} &\leadsto (k)_{d^\dg}^2 \\[1ex]
(\ell)_{e^\dg} &\leadsto (\ell)_{e^\dg}. \\
\end{aligned}\right.\end{equation}
\end{prop}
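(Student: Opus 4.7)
The plan is to prove the succession rule $\Omega_{(>,\neq,\neq)}$ by mirroring the analysis of $\Omega_{(>,\le,\neq)}$ in \cref{prop:rule_1176}, adapting each clause for the added pattern $210$ and the removal of the $100$ restriction. Fix an inversion sequence $a \in \cl{I}_n(102, 201, 210)$ and apply the updated factorisation $a = p_1 \circ p_2$ or $a = p_1 \circ r_2$ stated above, splitting into cases depending on whether $a$ contains $101$.

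In the $101$-avoiding case, the factorisation together with $210$-avoidance forces $a$ to be unimodal with at most one distinct value appearing after the final occurrence of $\max(a)$. Hence $a$ lies in $\cl{A}_{n,h}$ while it remains non-decreasing, and transitions to $\cl{E}^{\dg}_{n,\ell}$ as soon as the first value $\ell < \max(a)$ is appended; thereafter $210$-avoidance (combined with $201$ and $102$) forces every subsequent entry to equal $\ell$, justifying $(\ell)_{e^{\dg}} \leadsto (\ell)_{e^{\dg}}$. The $\cl{A}$ successions to other $\cl{A}$ or $\cl{E}^{\dg}$ labels are inherited essentially unchanged from the argument in \cref{ssec:1176}.

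In the $101$-containing case, \cref{rem_102,201,210_w/101} tells us $a \in \cl{D}^{\dg}_{n,k}$ for a unique $k = \premax(a)$. The auxiliary sets $\cl{B}$ and $\cl{C}^{\dg}$ serve as bookkeeping states recording sequences on the way to forming a $101$ with prospective "zero-value" $k \in [\premax, \max-1]$. From a $\cl{B}$-state we may append the current max (staying in $\cl{B}$) or append $k$ to create the initial "$10$" (moving to $\cl{C}^{\dg}$); from $\cl{C}^{\dg}$ we may repeat $k$ (now legal since $100$ is allowed) or append the max to complete the $101$ and enter $\cl{D}^{\dg}$. A careful check shows that from $\cl{D}^{\dg}_{n,k}$ any appended value other than $k$ or $\max(a)$ creates at least one of $102$, $201$, or $210$: for example, a value $v \in (k,\max)$ produces a $201$ via the triple $(\max, k, v)$, and a value $v \in [0,k)$ produces a $210$ via $(\max, k, v)$. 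Only $k$ and $\max(a)$ remain admissible, explaining the multiplicity $(k)_{d^{\dg}} \leadsto (k)_{d^{\dg}}^{2}$.

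The main obstacle will be the succession $(n,h)_a \leadsto (i)_b^{n-i}$, which encodes the combinatorial commitment to a future $101$ pattern with $k = i$ and carries multiplicity $n-i$. As in the Testart-style analysis underlying \cref{prop:rule_1176}, one must verify that when value $i \in [h,n-1]$ is appended to an $\cl{A}$-state ending at $h$, the $n-i$ copies of $(i)_b$ are in exact bijection with the available choices of a yet-to-be-appended new maximum in $[i+1, n]$, so that each future $101$-containing sequence is tracked by exactly one route through the tree. Once this bookkeeping is set up and $\cl{A}$, $\cl{D}^{\dg}$, $\cl{E}^{\dg}$ are verified to partition $\cl{I}(102,201,210)$, the rule enumerates $\cl{I}(102,201,210)$ without double-counting.
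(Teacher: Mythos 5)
Your overall architecture is the same as the paper's: the modified factorisation, the split into $101$-avoiding sequences (tracked by $\cl{A}$ and $\cl{E}^\dg$) and $101$-containing sequences (tracked by the commitment chain $\cl{B} \to \cl{C}^\dg \to \cl{D}^\dg$, where dropping $100$-avoidance adds the succession $(k)_{c^\dg} \leadsto (k)_{c^\dg}$ and the second copy of $(k)_{d^\dg} \leadsto (k)_{d^\dg}$), and the partition giving $F^\dg = A(z,1) + D^\dg(z,1) + E^\dg(z,1)$. Your justification of the multiplicity $2$ at $\cl{D}^\dg$ via the triples $(\max(a), k, v)$ is exactly right, and your $\cl{E}^\dg$ argument is correct as scoped to the $101$-avoiding case (though note that appending $\max(a)$ to an $\cl{E}^\dg$-sequence can be pattern-legal; the rule omits that child precisely because such sequences are generated once, via the commitment route).

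However, the step you single out as ``the main obstacle'' is described backwards, and as stated it is wrong. In this construction (inherited from \cref{prop:rule_1176}), the succession $(n,h)_a \leadsto (i)_b^{n-i}$ does \emph{not} correspond to appending the value $i$: it corresponds to appending a \emph{new maximum} $j \in [i+1,n]$ now, while committing to the value $i \in [h,j-1]$ as the ``0'' of the future $101$ pattern $(j,i,j)$. The $n-i$ copies of $(i)_b$ are indexed by the $n-i$ admissible choices of the maximum $j$ being appended at this very step, not by ``yet-to-be-appended'' maxima. Your reading contradicts the definition of $\cl{B}_{n,k}$ (non-decreasing sequences in which the max --- the ``1'' of the $101$ --- has already occurred, with $k \in [\premax(a), \max(a)-1]$ yet to occur after it), and it contradicts your own description of the $\cl{B}$-level moves (``append the current max'', ``append $k$ to create the initial $10$''), which only make sense when the max is already present and exceeds $k$. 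Nor can it be repaired: a maximum appended at a later position need not be bounded by the current length $n$, since that bound constrains only the entry appended now. Concretely, $(0,0,1,3,2,3) \in \cl{I}_6(102,201,210)$ is generated by appending the new maximum $j=3$ to $(0,0,1)$ with commitment $i=2$, then appending $2$, then $3$; in your reading its route would have to pass through $(0,0,1,2)$, which is not a prefix of the sequence, so the tree would no longer be a growing-on-the-right generating tree at all. The bijection must be stated the right way around before your concluding no-double-counting argument can go through.
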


As with \cref{ssec:1176} the tree generated by $\Omega_{(>, \ne, \ne)}$ contains \textit{phantom objects}, so that the number of labels at depth $n$ exceeds the number of $(>, \ne, \ne)$-avoiding inversion sequences. The phantom objects correspond to labels with subscripts $b$ and $c^\dg$.\footnote{See again Remark 13 and the diagram on page 13 of \cite{testart_completing_2024} for a discussion of phantom objects.}

We now obtain the generating function for $\cl{I}(>, \ne, \ne)$. Let %$\gamma_{n,k}^\dg$ (resp.\ $\delta_{n,k}^\dg$ and $\veps_{n,\ell}^\dg$) be the counting sequences for the sets $\cl{C}_{n,k}^\dg$ (resp.\ $\cl{D}_{n,k}^\dg$ and $\cl{E}_{n,\ell}^\dg$),
$C^\dg(z,x)$ (resp. $D^\dg(z,x)$ and $E^\dg(z,x)$) be the generating functions for the classes $\cl{C}^\dg$ (resp. $\cl{D}^\dg$ and $\cl{E}^\dg$) with $z$ marking the length $n$ and $x$ marking the second parameter $k$ or $\ell$ appropriately. Recall the functions $A(z, x)$ and $B(z, x)$ from \cref{ssec:1176}.

\begin{lem}
\label{lem:eqs102-201-210}
    The bivariate generating functions $C^\dg, D^\dg, E^\dg$ and ordinary generating function \newline
    $F^\dg(z) = \sum_{n=0}^\infty |\cl{I}_n(102, 201, 210)|z^n$ satisfy the following equations:
    \begin{align}
    C^\dg(z, x) &= zC^\dg(z, x) + zB(z, x) \\
    D^\dg(z, x) &= 2zD^\dg(z, x) + zC^\dg(z, x) \\
    E^\dg(z, x) &= zE^\dg(z, x) + \frac{z}{1-x}\Big(A(z, 1) - A(z, x)\Big) \label{eqn:1253_E_eqn}\\
    F^\dg(z) &= A(z, 1) + D^\dg(z, 1) + E^\dg(z, 1).
    \end{align}
\end{lem}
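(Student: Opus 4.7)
The plan is to translate the succession rule $\Omega_{(>,\neq,\neq)}$ from \cref{prop:rule_1253} directly into generating function equations, following the same template as the proof of \cref{eqs100-102-201}. Each equation is assembled by collecting, for each label type, the contributions of all successions that produce that label, with a factor of $z$ for the increase in depth and the appropriate monomial in $x$ for the secondary parameter.

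For $C^\dagger(z,x)$, the label $(k)_{c^\dagger}$ at depth $n+1$ is produced by exactly two successions in $\Omega_{(>,\neq,\neq)}$, namely $(k)_{c^\dagger}\leadsto(k)_{c^\dagger}$ (contributing $zC^\dagger(z,x)$) and $(k)_b\leadsto(k)_{c^\dagger}$ (contributing $zB(z,x)$). For $D^\dagger(z,x)$, the label $(k)_{d^\dagger}$ is produced by $(k)_{d^\dagger}\leadsto(k)_{d^\dagger}^{2}$ (contributing $2zD^\dagger(z,x)$) and $(k)_{c^\dagger}\leadsto(k)_{d^\dagger}$ (contributing $zC^\dagger(z,x)$). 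For $E^\dagger(z,x)$, the self-succession $(\ell)_{e^\dagger}\leadsto(\ell)_{e^\dagger}$ gives $zE^\dagger(z,x)$, while the succession $(n,h)_a\leadsto(i)_{e^\dagger}$ for $i\in[0,h-1]$ contributes
\begin{equation*}
\sum_{n,h\ge 0}\alpha_{n,h}\,z^{n+1}\sum_{i=0}^{h-1}x^{i} \;=\; \frac{z}{1-x}\bigl(A(z,1)-A(z,x)\bigr),
\end{equation*}
yielding \eqref{eqn:1253_E_eqn}. This part is essentially mechanical transcription.

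The identity $F^\dagger(z)=A(z,1)+D^\dagger(z,1)+E^\dagger(z,1)$ is the combinatorial content of the lemma and follows from a partition argument. By \cref{rem_102,201,210_w/101}, every $a\in\cl{I}(102,201,210)$ either contains $101$, in which case it lies precisely in $\cl{D}^\dagger$, or avoids $101$, in which case it is unimodal with at most one distinct value after the final maximum. In the latter case $a$ is non-decreasing (so $a\in\cl{A}$) or it has strictly descended from its maximum (so $a\in\cl{E}^\dagger$); these two subcases are disjoint since membership in $\cl{E}^\dagger$ requires $a_n<\max(a)$. Setting $x=1$ and summing therefore counts each $(102,201,210)$-avoider exactly once.

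The main subtlety—and the step most in need of care—is the status of the phantom labels subscripted by $b$ and $c^\dagger$: these objects are not themselves $(102,201,210)$-avoiders being counted, but rather intermediate bookkeeping for partial $101$ patterns, and they deliberately overlap with $\cl{A}$. They must appear on the right-hand sides of the $C^\dagger$ and $D^\dagger$ equations to correctly generate $\cl{D}^\dagger$, but must not be added into $F^\dagger$. Once one verifies that $\cl{A}$, $\cl{D}^\dagger$ and $\cl{E}^\dagger$ partition $\cl{I}(102,201,210)$ while $\cl{B}$ and $\cl{C}^\dagger$ lie outside this partition (being already covered by $\cl{A}$ or on their way to $\cl{D}^\dagger$), the remainder of the proof reduces to the routine translation described above.
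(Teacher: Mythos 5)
Your proposal is correct and takes essentially the same route as the paper, which omits the proof of this lemma precisely because it is the mechanical label-by-label translation of $\Omega_{(>,\neq,\neq)}$ (from \cref{prop:rule_1253}) into functional equations, done along the lines of the proof of \cref{eqs100-102-201}. Your closing discussion of the phantom labels $b$ and $c^\dg$ and of the partition $\cl{I}(102,201,210) = \cl{A} \sqcup \cl{D}^\dg \sqcup \cl{E}^\dg$ matches the paper's justification of $F^\dg(z) = A(z,1) + D^\dg(z,1) + E^\dg(z,1)$, via \cref{rem_102,201,210_w/101} and the remark that only $b$- and $c^\dg$-labelled objects are phantoms.
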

We omit the proof as it follows directly from \cref{prop:rule_1253}, along the lines of the proof for \cref{eqs100-102-201}.
% \begin{proof} We convert the succession rule into functional equations:
% \begin{align}
% C^\dg(z, x) &= \left(\sum_{n, k \geq 0}\gamma_{n, k}^\dg z^{n+1} x^k \right) + \left(\sum_{n, k \geq 0} \beta_{n, k} z^{n+1} x^h \right) \\
% &= zC^\dg(z, x) + zB(z, x). \\[2ex]
% D^\dg(z, x) &= \left(\sum_{n, k \geq 0} 2\delta_{n, k}^\dg z^{n+1} x^h \right) + \left(\sum_{n, k \geq 0} \gamma_{n, k}^\dg z^{n+1} x^h \right) \\
% &= 2zD^\dg(z, x) + zC^\dg(z, x). \\[2ex]
% E^\dg(z, x) &= \left(\sum_{n, \ell \geq 0}\veps_{n, \ell}^\dg z^{n+1}x^\ell \right) + \left(\sum_{n, h \geq 0}\alpha_{n, h}z^{n+1}\sum_{i=0}^{h-1}x^i \right) \\
% &= z\left(\sum_{n, \ell \geq 0}\veps_{n, \ell}^\dg z^n x^\ell \right) + z\left(\sum_{n, h \geq 0}\alpha_{n, h}z^n \frac{1 - x^h}{1-x} \right) \notag\\
% &= zE^\dg(z, x) + \frac{z}{1-x}\Big(A(z, 1) - A(z, x)\Big). 
% \end{align}
% The equation $F(z) = A(z, 1) + D(z, 1) + E^\dg(z, 1)$ is a direct consequence of the construction, where every element of $\cl{C}^\dg$ and $\cl{D}^\dg$ also occurs in $\cl{A}$ or $\cl{E}^\dg$ and does not need to be counted multiple times.
% \end{proof}

% \subsection{A279560, (100, 102, 201, 210)}

\begin{thm}
\label{gf102,201,210}
The generating function for inversion sequences avoiding the set of patterns $(102, 201, 210) \equiv (>, \neq, \neq)$ is given by
\begin{align}
F^\dg(z) &= \frac{2 - 15z + 32z^2 - 16z^3 + z(1 - 2z)(1+2z)\sqrt{1 - 4z}}{2(1-z)^2(1-2z)(1-4z)} \\
&= 1 + z + 2 z^2 + 6 z^3 + 22 z^4 + 85 z^5 + 328 z^6 + 1253 z^7 + 
 4754 z^8 + 17994 z^9 + 68158 z^{10} + \dots \notag
\end{align}
\end{thm}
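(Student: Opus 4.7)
The plan is to solve the four functional equations from \cref{lem:eqs102-201-210} sequentially, substitute $x=1$, and add. The first two equations are triangular, so elementary algebra gives
\[
C^\dg(z,x) = \frac{zB(z,x)}{1-z}, \qquad D^\dg(z,x) = \frac{z^2 B(z,x)}{(1-z)(1-2z)},
\]
with $B(z,x)$ already known in closed form from \eqref{eqn:soln_B_1176}. Since we only need $D^\dg(z,1)$, I substitute $x=1$ directly; the expression for $B(z,x)$ is of the indeterminate form $0/0$ at $x=1$ (as noted below \eqref{eqn:soln_B_1176}), so I extract $B(z,1)$ by a single application of L'H\^opital's rule in $x$, or equivalently by expanding numerator and denominator in powers of $(1-x)$ around $x=1$.

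The equation \eqref{eqn:1253_E_eqn} for $E^\dg$ rearranges to
\[
E^\dg(z,x) = \frac{z}{(1-z)(1-x)}\bigl(A(z,1) - A(z,x)\bigr).
\]
Crucially, and in contrast with the situation in \cref{ssec:1176}, there is no $x$-dependent kernel multiplying $E^\dg(z,x)$, so no kernel method is needed. I instead take the limit $x\to 1$, which is finite since $A(z,1)-A(z,x) = O(1-x)$, and obtain
\[
E^\dg(z,1) = \frac{z}{1-z}\,\left.\partial_x A(z,x)\right|_{x=1}.
\]
This derivative is evaluated directly from the explicit expression \eqref{eqn:soln_A_1176} for $A(z,x)$, and is the only place the surd $\sqrt{1-4z}$ enters.

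Finally I compute $F^\dg(z) = A(z,1) + D^\dg(z,1) + E^\dg(z,1)$ by reducing to the common denominator $2(1-z)^2(1-2z)(1-4z)$ and combining the rational and $\sqrt{1-4z}$-valued parts separately; both $A(z,1)$ (from \eqref{eqn:soln_A_1176}) and the factor $\partial_x A(z,x)|_{x=1}$ contribute terms of the form $\text{polynomial} + \text{polynomial}\cdot\sqrt{1-4z}$, so the sum is guaranteed to simplify to a function of this shape. The only real obstacle is the bookkeeping of this algebraic simplification, which I would delegate to a computer algebra system as in \cite{testart_completing_2024} and in the proof of \cref{gf100,102,201}. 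As a sanity check, I would expand the proposed $F^\dg(z)$ as a power series and verify that its first several coefficients match those generated by iterating $\Omega_{(>,\neq,\neq)}$ and those listed in OEIS A279563.
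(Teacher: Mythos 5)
Your proposal is correct and follows essentially the same route as the paper's proof: both solve the triangular system for $C^\dg$ and $D^\dg$ in terms of $B$, solve \eqref{eqn:1253_E_eqn} directly for $E^\dg$ (correctly noting that, unlike class 1176, no kernel method is needed here), and obtain $F^\dg(z)=A(z,1)+D^\dg(z,1)+E^\dg(z,1)$ by taking limits as $x\to 1$. The only cosmetic difference is that you evaluate at $x=1$ early, via L'H\^opital and $\left.\partial_x A(z,x)\right|_{x=1}$, whereas the paper records the full bivariate closed forms first and takes the limits at the end.
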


\begin{proof}
We will solve the equations in \cref{lem:eqs102-201-210} for $D^\dg(z, 1)$ and $E^\dg(z, 1)$, and subsequently obtain the generating function $F^\dg(z) = A(z, 1) + D^\dg(z, 1) + E^\dg(z, 1)$.

$A(z, x)$ and $B(z, x)$ are as solved in \cref{ssec:1176}, so we have
\begin{align}
C^\dg(z, x) &= \frac{z(1 - x(1 - 2zx + 2z^2) - 2z(1 - z) - (1 - x)(1 - 2z)\sqrt{1 - 4zx})}{2(1-x)(1-z)^2(1-x-z)^2}, \\
D^\dg(z, x) &= \frac{z^2(1 - x(1 - 2zx + 2z^2) - 2z(1 - z) - (1 - x)(1 - 2z)\sqrt{1 - 4zx})}{2(1-x)(1-z)^2(1-2z)(1-x-z)^2}.
\end{align}
% From \cref{eqs102-201-210} we have
% \[E^\dg (z, x) = zE^\dg (z, x) + \frac{z}{1-x}\Big(A(z, 1) - A(z, x)\Big),\]
% which yields
Then substituting into \eqref{eqn:1253_E_eqn} gives
\begin{equation}
E^\dg(z, x) = \frac{1 - x + 2zx - 2z - (1 - x - z)\sqrt{1 - 4z} - z\sqrt{1 - 4zx}}{2(1-x)(1-z)(1-x-z)}.
\end{equation}
Thus, we obtain the stated expression for $F^\dg(z)$ by summing $A(z, 1) + D^\dg(z, 1) + E^\dg(z, 1)$ (taking limits $x\to1$ as required).
\end{proof}

\subsection{Class 1016: $(>,-,\neq) \equiv (100, 102, 201, 210)$}
\label{ssec:1016}

% \nick{[Nathan to fix up this section, some of it is not needed as it is solved in previous sections]}\nathan{[All done, subject to a final look-over once all sections are complete]}

An explicit expression for $I_n(>,-,\neq)$ is given in \cite[Sec.\ 3.1.2]{martinez_patterns_2018}, in the form of a four-fold sum. The authors give no derivation. Here we will give a succession rule and use it to solve the generating function.

The triple of relations $(>, -, \ne)$ corresponds to the set of patterns $(100, 102, 201, 210)$. We construct a succession rule growing on the right which is based on the succession rule $\Omega_{(>, \le, \ne)}$ in \eqref{rule:100-102-201}. From this rule we extract the corresponding functional equations and solve them to obtain the generating function for $\cl{I}(>, -, \ne)$.

Define the following sets of objects: 
\begin{itemize}
    \item $\cl{E}^*_{n, \ell} = \{a \circ \ell\;|\; a \in \cl{I}_{n-1}(10)\; \text{and}\; \ell < a_{n-1}\}$: the set of unimodal inversion sequences of length $n$ which consist of a non-decreasing inversion sequence followed by a single descent to the value $\ell$.
\end{itemize}
The set $\mcI(100, 102, 201, 210)$ is the intersection of the sets $\mcI(100, 102, 201)$ and $\mcI(102, 201, 210)$, so we easily obtain the succession rule for $\cl{I}(>, -, \ne)$ from the corresponding intersection of $\Omega_{(>, \le, \ne)}$ and $\Omega_{(>, \ne, \ne)}$.\footnote{Explicitly, the `intersection' of succession rules consists of all successions present in both rules simultaneously. The set $\cl{E}^*$ is the intersection of $\cl{E}$ and $\cl{E}^\dg$. In fact, the objects of $\cl{E}^*$ cannot progress to any other inversion sequences while avoiding the necessary patterns, so the label is immaterial.}

\begin{prop}\label{prop:rule_1016}
Assign the following labels to the corresponding inversion sequences:
\begin{itemize}
    \item $(\ell)_{e^*}$ to each sequence in $\cl{E}^*_{n, \ell}$.
\end{itemize}
Then a succession rule which generates inversion sequences in $\cl{I}(>, -, \ne)$ is as follows:
\begin{equation}
\Omega_{(>, -, \ne)}:\left\{\begin{aligned}
(0, 0)_a \\
(n, h)_a &\leadsto (n+1, i)_a  &&\text{for}\; i \in [h, n] \\
    &\leadsto (i)_b^{n-i} &&\text{for}\; i \in [h, n-1] \\
    &\leadsto (i)_{e^*} &&\text{for}\; i \in [0, h-1] \\[1ex]
(k)_b &\leadsto (k)_b & \\
    &\leadsto (k)_c& \\[1ex]
(k)_c &\leadsto (k)_d & \\[1ex]
(k)_{d} &\leadsto (k)_d.
\end{aligned}\right.
\label{eqn:rule100-102-201-210}
\end{equation}
\end{prop}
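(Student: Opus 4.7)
The plan is to derive $\Omega_{(>, -, \ne)}$ as the ``intersection'' of the rules $\Omega_{(>, \le, \ne)}$ from \cref{prop:rule_1176} and $\Omega_{(>, \ne, \ne)}$ from \cref{prop:rule_1253}. The key observation is that
\[
\{100, 102, 201, 210\} = \{100, 102, 201\} \cup \{102, 201, 210\},
\]
so $\mcI(>, -, \ne) = \mcI(>, \le, \ne) \cap \mcI(>, \ne, \ne)$. Every object of the new class therefore carries a well-defined label under each of the two parent rules, and the new rule is obtained by retaining only the labels and successions that are compatible with both.

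First, I would verify that each label corresponds to an appropriate intersection of labels in the two parent rules. The classes $\cl{A}$, $\cl{B}$, $\cl{C}$ appear identically in both parent rules and transfer unchanged. Sequences containing the pattern $101$ are tracked by $\cl{D}$ and $\cl{D}^\dg$ respectively; their intersection $\cl{D}^*$ consists of those sequences in $\mcI(100,102,201,210)$ where, after the first occurrence of $\max(a)$, the sequence alternates at most once between $\max$ and $\premax$ and never descends below $\premax$. For the $101$-avoiding descending sequences, I would establish $\cl{E}^*_{n,\ell} = \cl{E}^{(1)}_{n,\ell} \cap \cl{E}^\dg_{n,\ell}$: an element of $\cl{E}^{(1)}$ strictly decreases after its last $\max$, while an element of $\cl{E}^\dg$ has constant tail after its last $\max$, so the intersection forces exactly one entry below the maximum, occurring at the end. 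This recovers the definition of $\cl{E}^*$ given in the statement.

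Next, I would verify each succession by intersecting the productions of the two parent rules label-by-label. From $(n,h)_a$ both rules agree on the $(n+1,i)_a$ and $(i)_b^{n-i}$ successions and on the transitions to $e$-type labels, which retag as $(i)_{e^*}$. The phantom productions from $(k)_b$ and $(k)_c$ are identical in both parents and carry over unchanged. From $(k)_{d^*}$, the succession appending the current maximum survives in both rules, but the $\Omega_{(>,\le,\ne)}$-production to $(i)_e$ is forbidden by the $210$ constraint and the second $(k)_{d^\dg}$ copy of $\Omega_{(>,\ne,\ne)}$ (appending the premaximum) is forbidden by the $100$ constraint, leaving just one outgoing succession. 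From $(\ell)_{e^*}$, the two parent rules have incompatible productions (to strictly smaller values, versus repeating the same value), and the intersection is empty.

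The main subtlety, and likely the key step, is verifying completeness: every sequence of $\mcI_{n+1}(>,-,\ne)$ must arise from exactly one label at depth $n$. The crucial case is an $\cl{E}^*$ sequence of the form $p_1 \circ (\ell)$ with $\max(p_1) = M > \ell$. A direct case analysis on appending $x$ shows: $x < \ell$ creates $210$ via $(M,\ell,x)$; $x = \ell$ creates $100$ via $(M,\ell,\ell)$; $\ell < x < M$ creates $201$ via $(M,\ell,x)$; $x > M$ creates $102$ via $(M,\ell,x)$; only $x = M$ is admissible, and it produces a $101$ pattern placing the extension in $\cl{D}^*$. One must then argue that this unique admissible child is generated elsewhere in the tree, specifically via the phantom chain $(k)_b \leadsto (k)_c \leadsto (k)_{d^*}$ initiated when the non-decreasing prefix of $p_1$ was processed, so it is counted exactly once and not double-counted through $(\ell)_{e^*}$. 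This justifies $(\ell)_{e^*}$ as a genuine dead-end and completes the proof of the rule.
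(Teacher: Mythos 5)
Your strategy --- obtaining $\Omega_{(>,-,\ne)}$ as the intersection of $\Omega_{(>,\le,\ne)}$ and $\Omega_{(>,\ne,\ne)}$, identifying $\cl{E}^* = \cl{E}^{(1)} \cap \cl{E}^\dg$, and letting the phantom chain $\cl{B} \to \cl{C} \to \cl{D}$ account for the extensions that $e^*$-labelled sequences admit --- is exactly the paper's route (the paper offers only a footnote in lieu of a proof, so your write-up is considerably more explicit). However, the case analysis at what you yourself call the key step is wrong as stated. You claim that for every $p_1 \circ (\ell) \in \cl{E}^*_{n,\ell}$ with $M = \max(p_1) > \ell$, the value $x = M$ is admissible. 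It is not, whenever $p_1$ contains a value $v$ with $\ell < v < M$ (equivalently, whenever $\ell < \premax(p_1)$): the subsequence $(v, \ell, M)$ reduces to $102$. For instance $(0,1,2,3,0) \in \cl{E}^*_{5,0}$ has \emph{no} admissible extension whatsoever, since appending $3$ creates the occurrence $(1,0,3)$ of $102$. Your enumeration of forbidden cases only considers triples beginning with $M$, and misses these.

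The repair is the dichotomy that the definition of $\cl{B}$ is designed around. If $\ell \ge \premax(p_1)$, then every value of $p_1$ exceeding $\ell$ equals $M$, so appending $M$ is admissible; and precisely in this case $\ell \in [\premax(p_1), M-1]$, so $p_1 \in \cl{B}_{n-1,\ell}$, hence $p_1 \circ (\ell) \in \cl{C}_{n,\ell}$ and the child $p_1 \circ (\ell) \circ (M)$ is produced by $(\ell)_c \leadsto (\ell)_d$. If instead $\ell < \premax(p_1)$, then no extension is admissible, and correspondingly no phantom chain with parameter $\ell$ was ever launched. Your two over-claims (the admissible child always exists; the phantom chain always produces it) exactly cancel, so your conclusion that $(\ell)_{e^*}$ is a dead end with nothing missed or double counted is correct --- but the argument as written is not. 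A smaller slip: the productions from $(k)_c$ are \emph{not} identical in the two parents ($\Omega_{(>,\ne,\ne)}$ also has $(k)_{c^\dg} \leadsto (k)_{c^\dg}$, appending the premaximum again, which the $100$-constraint kills); only their intersection is the single production $(k)_c \leadsto (k)_d$. Incidentally, your repaired account is sharper than the paper's own footnote, which asserts that objects of $\cl{E}^*$ cannot progress to any longer sequence in the class --- literally false when $\ell \ge \premax(p_1)$, e.g.\ $(0,0,2,2,1) \in \cl{E}^*_{5,1}$ extends to $(0,0,2,2,1,2) \in \cl{I}_6(>,-,\ne)$.
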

%It's modified from the succession rule $\Omega_{(100, 102, 201)}$ by insisting the $(\_)_e$ objects have no children, and that $(\_)_d$ objects have no $(\_)_e$ objects as children. Set $F(z) = \sum_{n\geq0}|\cl{I}_n(100, 102, 201, 210)|z^n$.
The tree generated by $\Omega_{(>, -, \ne)}$ is not isomorphic to a combinatorial generating tree for $\cl{I}(>, -, \ne)$, since the objects in $\cl{B}_{n, k}$ appear in $\cl{A}_{n, h}$ for some $h > k$, and the objects in $\cl{C}_{n, k}$ appear in $\cl{E}^*_{n, k}$.

% \subsubsection{Equations and Generating Function}
% We continue to use $\alpha_{n, h} = |\cl{A}_{n, h}|$ and $\beta_{n, k} = |\cl{B}_{n, k}|$, and similarly define $\gamma^*_{n, k}$, $\delta^*_{n, k}$ and $\veps^*_{n, \ell}$.
We use the generating functions $A(z,x), B(z,x), C(z, x)$ and $D(z,x)$ from \cref{ssec:1176} and set $E^*(z,x)$ to be the generating function of the class $\cl{E}^*$, with $z$ associated to length $n$ and $x$ associated to final value $\ell$. Finally we define
% \begin{align*}
% A(z, x) &= \sum_{n, h \geq 0} \alpha_{n, h} z^n x^h \\
% B(z, x) &= \sum_{n, k \geq 0} \beta_{n, k} z^n x^k \\
% C(z, x) &= \sum_{n, k \geq 0} \gamma_{n, k} z^n x^k \\
% D(z, x) &= \sum_{n, k \geq 0} \delta_{n, k} z^n x^k \\
% E(z, x) &= \sum_{n, \ell \geq 0} \veps_{n, \ell} z^n x^\ell,
% \end{align*}
% and let
\begin{equation*}
    F^*(z) = \sum_{n \ge 0}|\cl{I}(>, -, \ne)|z^n.
\end{equation*}

\begin{lem}
The bivariate generating functions $A$, $B$, $C$, $D$, $E^*$ and the ordinary generating function $F^*$ satisfy the following equations:
\begin{align}
    A(z, x) &= 1 + \frac{z}{1-x}\Big(A(z, x) -  xA(zx, 1)\Big) \\
    B(z, x) &= zB(z, x) + \frac{z}{1-x}\left(z\parz A(z, x) - x\parx A(z, x) + \frac{x}{1-x}\Big(A(zx, 1) - A(z, x) \Big) \right), \\
    C(z, x) &= zB(z, x) \\
    D(z, x) &= zD(z, x) + zC(z, x), \\
    E^*(z, x) &= \frac{z}{1-x}\Big(A(z, 1) - A(z, x)\Big), \\
    F^*(z) &= A(z, 1) + D(z, 1) + E^*(z, 1).
\end{align}
\end{lem}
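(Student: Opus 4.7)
The plan is to transcribe the succession rule $\Omega_{(>,-,\ne)}$ of \cref{prop:rule_1016} into generating function equations by the standard procedure used in the proof of \cref{eqs100-102-201}. Observe first that the successions of $\Omega_{(>,-,\ne)}$ involving the labels $(n,h)_a$, $(k)_b$, $(k)_c$ and $(k)_d$ are identical to the corresponding successions in $\Omega_{(>,\le,\ne)}$ (compare \eqref{rule:100-102-201} and \eqref{eqn:rule100-102-201-210}). Hence the four functional equations for $A$, $B$, $C$, $D$ are inherited verbatim from \cref{eqs100-102-201}, and require no new derivation.

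The only genuinely new equation is the one for $E^*$. Reading $\Omega_{(>,-,\ne)}$, the unique succession producing a label $(i)_{e^*}$ is $(n,h)_a \leadsto (i)_{e^*}$ with $i \in [0, h-1]$, and the labels $(\ell)_{e^*}$ have no successors of their own. This immediately yields
\begin{equation*}
    E^*(z,x) = \sum_{n,h \ge 0} \alpha_{n,h} z^{n+1}\sum_{i=0}^{h-1} x^i = \frac{z}{1-x}\bigl(A(z,1) - A(z,x)\bigr),
\end{equation*}
as claimed. For the final identity $F^*(z) = A(z,1) + D(z,1) + E^*(z,1)$ I would partition every $a \in \cl{I}(100,102,201,210)$ into three classes: if $a$ is non-decreasing then $a \in \cl{A}$; if $a$ is unimodal but not monotone then 100-avoidance forbids any repeat below the peak and 210-avoidance forbids any second descent, so exactly one value strictly below $\max(a)$ appears after the final peak (and only once), placing $a \in \cl{E}^*$; and if $a$ contains $101$, the factorisation of \cref{ssec:1176} combined with 210-avoidance forces the tail factor $q_3$ to be empty and $a_n = \max(a)$, placing $a \in \cl{D}$. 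Disjointness is then immediate: $\cl{A}$ avoids $101$ and is non-decreasing, $\cl{E}^*$ avoids $101$ but strictly descends at its final position, and $\cl{D}$ contains $101$.

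The main obstacle is the last case: one must carefully verify that 210-avoidance really does annihilate the tail $q_3$ whenever a $101$ pattern is present, and that in such sequences $a_n$ must equal $\max(a)$ rather than $\premax(a)$ (otherwise, since 100-avoidance forces the premaximum to appear only once inside the middle factor, no third term equal to $\max(a)$ would remain to complete a $101$ subsequence). Once this three-way partition $\cl{A} \sqcup \cl{D} \sqcup \cl{E}^*$ is secured, the remainder is routine bookkeeping on the succession rule, following exactly the pattern of \cref{lem:eqs102-201-210}.
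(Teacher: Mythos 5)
Your proposal is correct and follows exactly the route the paper intends: the paper omits this proof precisely because it is the transcription of $\Omega_{(>,-,\ne)}$ from \cref{prop:rule_1016} along the lines of the proof of \cref{eqs100-102-201}, which is what you carry out (inheriting the $A,B,C,D$ equations, reading off $E^*$ from the single succession $(n,h)_a \leadsto (i)_{e^*}$, and justifying $F^* = A + D + E^*$ by the partition into non-decreasing sequences, $101$-avoiding unimodal non-monotone sequences, and $101$-containing sequences). Your verification that $210$-avoidance kills the tail $q_3$ and forces $a_n = \max(a)$ is exactly the detail needed to place the $101$-containing sequences in $\cl{D}$, matching the paper's implicit argument.
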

The proof just follows from \cref{prop:rule_1016}. We omit the details as the proof is very similar to that of \cref{eqs100-102-201}.

% \begin{proof}
% \nick{[can probably skip this proof, it looks almost identical to the previous one... indeed much of it is completely identical]}

% We convert the succession rule $\Omega_{(100, 102, 201, 210)}$ into functional equations:
% \begin{align*}
% A(z, x) &= 1 + \sum_{n,h\geq0}\alpha_{n,h}z^{n+1}\sum_{i=h}^n x^i \\
% &= 1 + \frac{z}{1-x}\Big(A(z, x) - xA(zx, 1)\Big),
% \end{align*}
% \begin{align*}
% B(z, x) &= \left(\sum_{n,h\geq0} \alpha_{n,h} z^{n+1}\sum_{i=h}^{n-1}(n-i)x^i\right) + \left(\sum_{n,k \geq0}\beta_{n,k}z^{n+1}x^k\right) \\
% &= zB(z, x) + \frac{z}{1-x}\left(z\parz A(z, x) - x\parx A(z, x) + \frac{x}{1-x}\Big(A(zx, 1) - A(z, x) \Big) \right),
% \end{align*}
% \begin{align*}
% C(z, x) &= \sum_{n, k \geq 0} \beta_{n, k} z^{n+1} x^h \\
% &= zB(z, x),
% \end{align*}
% \begin{align*}
% D(z, x) &= \left(\sum_{n, k \geq 0} \delta_{n, k} z^{n+1} x^h \right) + \left(\sum_{n, k \geq 0} \gamma_{n, k} z^{n+1} x^h \right)\\
% &= zD(z, x) + zC(z, x),
% \end{align*}
% \begin{align*}
% E(z, x) &= \sum_{n,h\geq0}\alpha_{n,h}z^{n+1}\sum_{i=0}^{h-1}x^i \\
% &= \frac{z}{1-x}\Big(A(z, 1) - A(z, x)\Big),
% \end{align*}
% \begin{align*}
% F(z) = A(z, 1) + D(z, 1) + E(z ,1).
% \end{align*}
% \end{proof}

\begin{thm}
    The generating function for inversion sequences avoiding the set of patterns \newline $(100, 102, 201, 210) \equiv (>, -, \ne)$ is given by
    \begin{align}
        F^*(z) &= \frac{(1-4z)(1-2z)(3-2z)- (1 - 8z + 12z^2 - 2z^3)\sqrt{1 - 4z}}{2(1 - z)^2(1 - 4z)} \\
        &= 1 + z + 2 z^2 + 6 z^3 + 21 z^4 + 76 z^5 + 277 z^6 + 1016 z^7 + 
 3756 z^8 + 13998 z^9 + 52554 z^{10} + \dots \notag
    \end{align}
\end{thm}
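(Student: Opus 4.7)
The plan is to solve the system from the preceding lemma for $A(z,1)$, $D(z,1)$ and $E^*(z,1)$ separately, then sum. Crucially, the expressions for $A(z,x)$, $B(z,x)$, $C(z,x)$ and $D(z,x)$ have already been established in \cref{ssec:1176} (equations \eqref{eqn:soln_A_1176}--\eqref{eqn:soln_D_1176}), so the only genuinely new work is to resolve the equation $E^*(z,x) = \frac{z}{1-x}(A(z,1) - A(z,x))$ and then specialise to $x=1$. No kernel method is required at this stage because $E^*$ is already expressed explicitly in terms of $A$; this is the key simplification over class 1176.

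First I would substitute the closed form of $A(z,x)$ from \eqref{eqn:soln_A_1176} directly into the definition of $E^*(z,x)$, giving an explicit algebraic expression. Next I would obtain $A(z,1)$, $D(z,1)$ and $E^*(z,1)$ as limits $x \to 1$. Since the factor $(1-x)$ appears in the denominator of each of $A$, $D$ and $E^*$, these are all $0/0$ limits; for $E^*$ in particular we have
\[
E^*(z,1) = z \left.\frac{\partial A}{\partial x}\right|_{x=1},
\]
which is straightforward from the closed form of $A$. For $A(z,1)$ the limit gives $(1-\sqrt{1-4z})/(2z)$ (the Catalan generating function shifted appropriately), and $D(z,1)$ can be handled by expanding numerator and denominator in $(1-x)$.

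Finally I would sum $A(z,1) + D(z,1) + E^*(z,1)$ and simplify. The main obstacle is purely algebraic: keeping the two distinct square roots under control and combining fractions with denominators of the form $(1-z)$, $(1-2z)$, $(1-4z)$, $(1-x-z)$ after setting $x=1$. A symbolic algebra system (e.g.\ \textsc{Mathematica}, as already used in \cref{ssec:1176}) can be invoked to carry out the simplification and to verify that the final expression matches the claimed
\[
F^*(z) = \frac{(1-4z)(1-2z)(3-2z)- (1 - 8z + 12z^2 - 2z^3)\sqrt{1 - 4z}}{2(1 - z)^2(1 - 4z)}.
\]
As a sanity check, the first few Taylor coefficients of this expression can be compared against direct enumeration of $\mcI_n(100,102,201,210)$ and against the four-fold sum in \cite{martinez_patterns_2018}, giving independent confirmation that the computation has been performed correctly.
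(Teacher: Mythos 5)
Your proposal follows the paper's proof essentially verbatim: reuse the closed forms of $A$, $B$, $C$, $D$ from \cref{ssec:1176}, substitute into the explicit (kernel-free) equation for $E^*$, take limits $x \to 1$, and sum $A(z,1) + D(z,1) + E^*(z,1)$ with computer-algebra assistance. The only quibble is your claim that all three evaluations are $0/0$ limits: $A(z,1)$ is a direct substitution (its denominator $2(1-x-z)$ equals $-2z$ at $x=1$), and only $D$ and $E^*$ require genuine limits, but this does not affect the validity of the argument.
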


\begin{proof}
The solutions to $A, B, C,$ and $D$ are as given in \eqref{eqn:soln_A_1176}, \eqref{eqn:soln_B_1176}, \eqref{eqn:soln_C_1176}, and \eqref{eqn:soln_D_1176}, respectively.
% Use kernel method:
% \begin{align}
%     A(z, x) &= \frac{1-x - zxA(zx, 1)}{1-x-z},
% \end{align}
% so
% \begin{align}
%     &&1 - (1-z) - z(1-z)A(z-z^2, 1) = 0 \\
%     &\Rightarrow & A(z-z^2, 1) = \frac{1}{1-z} \\
%     &\Rightarrow & A(z, 1) = \frac{1-\sqrt{1-4z}}{2z}
% \end{align}
% and so
% \begin{align}
%     A(z, x) = \frac{1-2x+\sqrt{1-4zx}}{2(1-x-z)}.
% \end{align}
% Similarly, we solved for $B(z, x)$ when working on $\cl{I}(102, 201)$:
% \begin{align}
%     B(z, x) = \frac{1 - x(1 - 2zx + 2z^2) - 2z(1 - z) - (1 - x)(1 - 2z)\sqrt{1 - 4zx}}{2(1-x)(1-z)(1-x-z)^2}
% \end{align}
% which immediately produces
%\begin{align}
%    C^*(z, x) = \frac{z(1 - x(1 - 2zx + 2z^2) - 2z(1 - z) - (1 - x)(1 - 2z)\sqrt{1 - 4zx})}{2(1-x)(1-z)(1-x-z)^2}
%\end{align}
%and
%\begin{align}
%    D^*(z, x) = \frac{z^2(1 - x(1 - 2zx + 2z^2) - 2z(1 - z) - (1 - x)(1 - 2z)\sqrt{1 - 4zx})}{2(1-x)(1-z)^2(1-x-z)^2}.
%\end{align}
%Then (taking the limit $x \to 1$):
%\begin{align}
%    D^*(z, 1) = \frac{1 - 4z - 2 z^2 - (1 - 2 z)\sqrt{1 - 4 z}}{2(1 - z)^2}.
%\end{align}

From $E^*(z, x) = \frac{z}{1-x}\Big(A(z, 1) - A(z, x)\Big)$ we obtain
\begin{align}
    E^*(z, x) %&= \frac{z}{1-x}\left(\frac{1-\sqrt{1-4z}}{2z} - \frac{1-2x+\sqrt{1-4zx}}{2(1-x-z)}\right) \\
    % &= \frac{z}{1-x}\frac{(1-x-z)-(1-x-z)\sqrt{1-4z} - z + 2zx - z\sqrt{1-4zx}}{2z(1-x-z)} \\
    &= \frac{(1-x)(1-2z)-(1-x-z)\sqrt{1-4z} - z\sqrt{1-4zx}}{(1-x)(1-x-z)}.
\end{align}
Then (taking the limit $x \to 1$):
\begin{align}
    E^*(z, 1) %&= \frac{-1 + \sqrt{1 - 4 z} + 2 z (1 + \frac{z}{\sqrt{1 - 4 z}})}{2z} \\
    % &= \frac{-1(1-4z) + (1-4z)\sqrt{1-4z}+2z(1-4z) + 2z^2\sqrt{1-4z}}{2z(1-4z)} \\
    &= \frac{-1+6z-8z^2 + (1-4z+2z^2)\sqrt{1-4z}}{2z(1-4z)}.
\end{align}

Combining these, we obtain
\begin{align}
    F^*(z) &= A(z, 1) + D(z, 1) + E^*(z, 1) \\
    % &= \frac{3 - 20z + 36z^2 - 
    % 16z^3 - (1 - 8z + 12z^2 - 2z^3)\sqrt{1 - 4z}}{2(1 - z)^2(1 - 4z)} \\
    &= \frac{(1-4z)(1-2z)(3-2z)- (1 - 8z + 12z^2 - 2z^3)\sqrt{1 - 4z}}{2(1 - z)^2(1 - 4z)} \\
    &= 1 + z + 2 z^2 + 6 z^3 + 21 z^4 + 76 z^5 + 277 z^6 + 1016 z^7 + 
 3756 z^8 + 13998 z^9 + \ldots.
\end{align}
\end{proof}

\subsection{Class 830: $(\ne, >, \ge) \equiv (010, 120, 210)$}
\label{ssec:830}
In this section we present a succession rule grown on the right for the class $\cl{I}(\ne, >, \ge)$, but we do not believe this class to have an algebraic generating function. The construction tracks the length, maximum value, and premaximum value of each inversion sequence. We make the observation that inversion sequences which avoid both 120 and 210 must end on either their maximum value or premaximum value.

Define the sets
\begin{itemize}
    \item $\cl{S}_{n, h, k}$ to be inversion sequences in $\cl{I}_n(\ne, >, \ge)$ ending on their maximum $h$ and with premaximum $k$. For the sake of this definition, set $k=0$ for sequences where $h=0$ (which consist exclusively of zeros).
    \item $\cl{T}_{n, h, k}$ to be inversion sequences in $\cl{I}_n(\ne, >, \ge)$ ending on their premaximum value $k$ with maximum $h \ge 2$.
\end{itemize}

Elements of both $\cl{S}_{n, h, k}$ and $\cl{T}_{n, h, k}$ may repeat their final value which increments $n$ to $n+1$, or may ascend to a new maximum $i > h$ which results in an element of $\cl{S}_{n+1, i, h}$ where the former maximum is now the premaximum.

Elements of $\cl{S}_{n, h, k}$ may descend to a new premaximum $i \in [k+1, h-1]$ (so long as $h > k+1$). They must not return to the current premaximum $k$, which would produce the 010 pattern $(k, h, k)$. Elements of $\cl{T}_{n,h,k}$ may ascend to a new premaximum $i \in [k+1, h-1]$ (so long as $h > k+1$). They may also return to the current maximum $h$.

Assign the labels $(n, h, k)_s$ to elements of $\cl{S}_{n, h, k}$ and $(n, h, k)_t$ to elements of $\cl{T}_{n, h, k}$. The following is a succession rule for the class $\cl{I}(\ne, >, \ge)$:

\begin{equation}\Omega_{(\ne, >, \ge)}:\left\{\begin{aligned}
(0, 0, 0)_s \\
(n, h, k)_s &\leadsto (n+1, h, k)_s \\
&\leadsto (n+1, i, h)_s && \text{for}\; i \in [h+1, n] \\
&\leadsto (n+1, h, i)_t && \text{for}\; i \in [k+1, h-1], \;\text{if}\; h>0\\[1ex]
(n, h, k)_t &\leadsto (n+1, h, k)_s \\
&\leadsto (n+1, i, h)_s &&\text{for}\; i \in [h+1, n] \\
&\leadsto (n+1, h, i)_t &&\text{for}\; i \in [k, h-1].
\end{aligned}\right.\end{equation}

Let $S(z, x, y)$ and $T(z, x, y)$ be the generating functions for the classes $\cl{S}$ and $\cl{T}$ respectively, where $z$ is assigned to length $n$, $x$ to maximum $h$, and $y$ to premaximum $k$. Let $F^\ddagger(z) = \sum_{n\geq0}|\cl{I}(\ne, >, \ge)|z^n$. We obtain the following functional equations from the rule $\Omega_{(\ne, >, \ge)}$:
\begin{align}
    S(z, x, y)
    &= 1 + zS(z, x, y) + zT(z, x, y) \notag\\
    & + \frac{zx}{1-x}\Big(S(z, xy, 1) - S(zx, y, 1)  + T(z, xy, 1) - T(zx, y, 1) \Big) \\
    T(z, x, y)
    &= \frac{z}{1-z} + \frac{z}{1-y}\Big(yS(z, x, y) - S(z, xy, 1) + T(z, x, y) - T(z, xy, 1)\Big) \\
    F^\ddagger(z) &= S(z, 1, 1) + T(z, 1, 1).
\end{align}

Unfortunately, we have been unable to solve these equations (but do not expect $\cl{I}(\ne, >, \ge)$ to be algebraic, in any case).

\subsection{Class 2106: $(>, \le, \ge) \equiv (100, 101, 201)$}
\label{ssec:2106}

In this section we present a succession rule grown on the right for the class $\cl{I}(>, \le, \ge)$, but we do not believe this class to have an algebraic generating function. The construction for class 2106 differs from those presented in \cref{ssec:1176,ssec:1253,ssec:1016}: as the sequences grow, we can directly track the number of active sites (positions where a new last entry can be validly appended) as well as the maximum value. We note that our construction for $(100, 101, 201)$-avoiders is very similar to the construction for $(101, 201)$-avoiders presented in \cite{martinez_patterns_2018}, in which the statistics tracked are ``number of valid descents'' and ``number of valid ascents'', represented by $h$ and $k$ respectively.\footnote{We thank one of the reviewers for pointing this out.}

Suppose $a \in \cl{I}(100, 101, 201)$ is non-decreasing and ends on value $h > 0$, and we append the value $k < h$ to $a$. This inversion sequence may never return to any of the values in the interval $[k, h]$ to ensure avoidance of $(100, 101, 201)$. It may descend further to a value $j < k$ which augments the set of forbidden values to $[j, h]$, or ascend to $i > h$. Future descents require knowledge of which values are forbidden, or at least \textit{how many} values are forbidden. We therefore track the number of allowed values, along with the length and maximum value of each sequence.

Define the sets
\begin{itemize}
    \item $\cl{P}_{n, h, k}$ to be inversion sequences in $\cl{I}_n(>, \le, \ge)$ ending on their maximum $h$, which have $k$ valid choices for an immediate descent. An equivalent definition of $k$ is as the cardinality of the set \newline$\{i \in [0, h-1] \;|\; a \circ i \in \cl{I}(>, \le, \ge)\}$.
    \item $\cl{Q}_{n, h, k}$ to be inversion sequences in $\cl{I}_n(>, \le, \ge)$ ending below their maximum $h$, which have $k$ valid choices for an immediate descent.
\end{itemize}

Elements of $\cl{P}_{n, h, k}$ may ascend/repeat to any value $i \in [h, n]$, which adds $i-h$ valid future descents to the values in $[h, i-1]$. Elements of $\cl{Q}_{n, h, k}$ may ascend to any value $i \in [h+1, n]$ (since returning to $h$ produces 101), which adds $i-h-1$ valid future descents to the values in $[h+1, i-1]$. Any ascent must be to a new maximum value to avoid 201.

Now consider descents: suppose $a \in \cl{P}_{n, h, k} \cup \cl{Q}_{n, h, k}$ has valid descents to the values ${j_0 < \ldots < j_{k-1}}$. A descent to the value $j_i$ invalidates all future instances of $j_{i+1}, \ldots j_{k-1}$ to ensure avoidance of 201, leaving $i$ valid future descents to the values in $\{j_0,\dots, j_{i-1}\}$. Elements of $\cl{Q}$ may never repeat their final value, in order to avoid 100.

Assigning the labels $(n, h, k)_p$ to elements of $\cl{P}_{n, h, k}$ and $(n, h, k)_q$ to elements of $\cl{Q}_{n, h, k}$ allows us to write the following succession rule:
\begin{equation}\Omega_{(>, \le, \ge)}:\left\{\begin{aligned}
(0, 0, 0)_p \\
(n, h, k)_p &\leadsto (n+1, i, k+i-h)_p &&\text{for}\; i \in [h, n] \\
&\leadsto (n+1, h, i)_q &&\text{for}\; i \in [0, k-1] \\[1ex]
(n, h, k)_q &\leadsto (n+1, i, k+i-h-1)_p &&\text{for}\; i \in [h+1, n] \\
&\leadsto (n+1, h, i)_q &&\text{for}\; i \in [0, k-1].
\end{aligned}\right.\end{equation}

Using the generating functions $P(z,x,y)$ and $Q(z,x,y)$ with $z$, $x$, $y$ corresponding to $n$, $h$, and $k$ respectively, the rule $\Omega_{(>, \le, \ge)}$ gives equations
\begin{align}
    P(z, x, y) &= 1 + \frac{z}{1-xy}\Big[P(z, x, y) - xyP\left(zxy, \frac{1}{y}, y\right) + xQ(z, x, y) - xQ\left(zxy, \frac{1}{y}, y\right)\Big] \\
    Q(z, x, y) &= \frac{z}{1-y}[P(z, x, 1) - P(z, x, y) + Q(z, x, 1) - Q(z, x, y)].
\end{align}
The generating function for the class $\cl{I}(>, \le, \ge)$ is the sum of $P(z, x, y)$ and $Q(z, x, y)$. We have been unable to solve these equations.

\section{Inversion sequences grown on the left}\label{sec:left}

In this section we will discuss the classes for which growing on the left is simpler.

In every construction described here, it is always necessary to increment every non-zero term. It is also always necessary to prepend a 0 after incrementing terms. Moreover one can generally always just prepend a 0 without incrementing any other 0's (this is always accounted for in the succession rule). We will generally not mention these facts when describing a construction, and focus only on which 0's are being incremented. We also point out that in this section we will reuse symbols $\cl{A}$, $\cl{B}$, and so on, but the meanings never carry over from one subsection to the next unless explicitly stated.

\subsection{Class 663A: $(-,\neq,\geq) \equiv (010,101,110,120,201,210)$}
\label{ssec:663A}

In \cite{martinez_patterns_2018} class 663A $(-,\neq,\geq)$ is shown to be Wilf-equivalent to class 663B $(\neq,-,\geq)$, but neither class is enumerated. Here we will give a succession rule and use it to solve the generating function.

Define $\cl{A} = \cl{I}(-,\neq,\geq)$, and define $\cl{C} \supset \cl{A}$ to be the set of those inversion sequences which avoid all the relevant patterns, except possibly occurrences of 010 where both 0's have value 0. Let $\cl{B} = \cl{C} \setminus \cl{A}$. Inversion sequences in $\cl{A}$ have an initial prefix of 0's and then all subsequent values are positive, while inversion sequences in $\cl{B}$ have exactly two consecutive runs of 0's (if there were more than two then a pattern 110, 120 or 210 would necessarily occur). For all objects in $\cl{C}$, let $P$ denote the initial run of 0's, let $S$ be the next run of 0's (if it exists), and let $p = |P|$.

For growing on the left, observe that
\begin{itemize}
    \item any 0's incremented must always be consecutive (or else 101 or 010 would occur);
    \item if any 0's in $S$ are incremented then they must all be incremented simultaneously (or else 201 or 210 would occur);
    \item if any 0's in $P$ are incremented then this must either be some number of 0's on the right, or else a single 0 not on the right;
    \item if $S$ exists then no 0's in $P$ can be incremented (or else a 120 would occur).
\end{itemize}
Using labels $(p)_a$ and $(p)_b$ for objects in $\cl{A}$ and $\cl{B}$ respectively, we have the rule
\begin{equation}
\Omega_{(\neq,-,\geq)}:\left\{\begin{aligned}
(0)_a \\
(p)_a &\leadsto (k)_a &&\text{for}\; k \in [1, p+1] \\
&\leadsto (\ell)_b &&\text{for}\; \ell \in [1, p-1] \\[1ex]
(p)_b &\leadsto (p+1)_a\\
&\leadsto (p+1)_b.
\end{aligned}\right.
\label{eqn:rule663A}
\end{equation}

\begin{figure}
\centering
\begin{subfigure}[t]{0.45\textwidth}
    \resizebox{\textwidth}{!}{%A (010, 100, 101, 120, 201, 210)-avoiding sequence

\begin{tikzpicture}[mnode/.style={circle,draw=black,fill=black,inner sep=0pt,minimum size=4pt}]
            \draw[step=0.5cm, gray,thin] (-0.5,0) grid (5.9,2.9);
            \draw (0,0) node[mnode]{}
                \foreach \x/\y in {0/0, 0.5/0, 1/0, 1.5/0, 2/1, 2.5/0, 3/0, 3.5/0, 4/1.5, 4.5/1.5, 5/2}{
            -- (\x,\y) node[mnode]{}
            };
            \draw[->] (-0.5, 0) -- (6, 0);
            \draw[->] (-0.5, 0) -- (-0.5, 3);
            
\end{tikzpicture}}
    \caption{An inversion sequence in set $\cl{B}$ (\cref{ssec:663A}) with label $(4)_b.$}
\end{subfigure}
\hspace{4pt}
\begin{subfigure}[t]{0.45\textwidth}
    \resizebox{\textwidth}{!}{%A (010, 100, 101, 120, 201, 210)-avoiding sequence with progressions depicted

\begin{tikzpicture}[mnode/.style={circle,draw=black,fill=black,inner sep=0pt,minimum size=4pt}, pnode/.style={circle,draw=blue,fill=blue,inner sep=0pt,minimum size=4pt}]
            \draw[step=0.5cm, gray,thin] (-0.5,0) grid (5.9,2.9);
                        \draw (2.5,1.5) node[pnode]{}
                \foreach \x/\y in {2.5/1.5, 3/0.5, 3.5/0.5, 4/0.5, 4.5/2}{
            -- (\x,\y) node[pnode]{}
            };
            \draw (0,0) node[mnode]{}
                \foreach \x/\y in {0/0, 0.5/0, 1/0, 1.5/0, 2/0, 2.5/1.5, 3/0, 3.5/0, 4/0, 4.5/2, 5/2, 5.5/2.5}{
            -- (\x,\y) node[mnode]{}
            };
            
            \draw[->] (-0.5, 0) -- (6, 0);
            \draw[->] (-0.5, 0) -- (-0.5, 3);
            
\end{tikzpicture}}
    \caption{The two possible progressions from the left sequence; the black sequence has label $(5)_b$, and the sequence with blue nodes instead has label $(5)_a$.}
\end{subfigure}
\caption{A progression of inversion sequences described in the rule $\Omega_{(\ne, -, \ge)}$. Each positive term is raised, some zeros may be raised, and a zero is prepended. In this case, only the sequence containing blue nodes avoids $(\ne, -, \ge)$.}
\label{fig:prog_663}
\end{figure}
See \cref{fig:prog_663} for a depiction of growth on the left according to this succession rule.
% , provided since this is the first such rule given in this work.

Using generating functions $A(z,x)$ and $B(z,x)$, where $z$ marks length and $x$ marks $p$, the above rule can be written as (omitting the $z$ arguments for brevity)
\begin{align}
    A(x) &= 1 + \frac{zx}{1-x}\left[A(1) - xA(x)\right] + zxB(x) \label{eqn:663A_A_eqn}\\
    B(x) &= \frac{z}{1-x}\left[xA(1)-A(x)\right] + z + zxB(x).\label{eqn:663A_B_eqn}
\end{align}
(The $+z$ in \eqref{eqn:663A_B_eqn} compensates for a $-z$ which comes from the $A$ terms.)
This system is straightforward to solve: one can use \eqref{eqn:663A_B_eqn} to eliminate $B$ from \eqref{eqn:663A_A_eqn}, and then a simple application of the kernel method yields the result. The solution is given in the following theorem.
\begin{thm}
The generating function for the class $\cl{I}(-,\neq,\geq)$ is given by
\begin{align}
    A(1) &= \frac{(X-1)(1-zX+z^2X)}{zX} \\
    &= 1 + z + 2 z^2 + 5 z^3 + 15 z^4 + 50 z^5 + 178 z^6 + 663 z^7 + 2552 z^8 + \dots
\end{align}
where $X \equiv X(z)$ is the only power series root of the kernel
\begin{equation}
    1 - x - zx + 2 zx^2 + z^2x  - z^2x^3.
\end{equation}
\end{thm}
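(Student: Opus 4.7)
The plan is to reduce the system \eqref{eqn:663A_A_eqn}--\eqref{eqn:663A_B_eqn} to a single functional equation in $A(x)$ and $A(1)$, and then apply the kernel method in the style of \cite{prodinger_kernel_2004}.

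First I would solve \eqref{eqn:663A_B_eqn} for $B(x)$, which is immediate since $B(x)$ appears linearly on both sides:
\[
B(x) = \frac{z}{1-zx} + \frac{z\bigl(xA(1) - A(x)\bigr)}{(1-x)(1-zx)}.
\]
Substituting this into \eqref{eqn:663A_A_eqn} and multiplying through by $(1-x)(1-zx)$ to clear denominators, the terms containing $A(x)$ can be collected on the left with total coefficient
\[
(1-x)(1-zx) + zx^2(1-zx) + z^2x = 1 - x - zx + 2zx^2 + z^2x - z^2x^3,
\]
which is exactly the kernel $K(x)$ named in the theorem. The right-hand side is a polynomial in $x$ and $z$ that is linear in $A(1)$.

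Now I apply the kernel method. The kernel $K(x)$ is a cubic in $x$, and $K(x)|_{z=0} = 1-x$, so by an implicit-function argument there is a unique root $X \equiv X(z)$ that is a power series in $z$ with $X(0)=1$. Because $a_{n,p}=0$ whenever $p>n$, each $A_n(x):=\sum_p a_{n,p}x^p$ is a polynomial and $A(X,z)$ is a well-defined formal power series in $z$; the substitution $x=X$ therefore makes sense, annihilates the left-hand side, and leaves a linear equation for $A(1)$. Solving it and factoring the numerator as $(X-1)(1 - zX + z^2X)$ gives the stated closed form.

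The work is essentially bookkeeping, and the main obstacle is simply verifying that (i) after clearing denominators the coefficient of $A(x)$ really does reproduce $K(x)$ exactly, and (ii) the residual polynomial on the right-hand side, once $K(X)=0$ is used, collapses to $(X-1)(1-zX+z^2X)/(zX)$. The only non-trivial point in the kernel method itself is the existence and uniqueness of the power series root $X$, which is transparent from $K(x)|_{z=0}=1-x$. As a sanity check, one can expand the resulting $A(1)$ to a few orders in $z$ and compare with the stated sequence $1, 1, 2, 5, 15, 50, 178, 663, 2552, \dots$.
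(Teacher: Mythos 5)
Your proposal is correct and takes essentially the same route as the paper, which solves \eqref{eqn:663A_B_eqn} for $B(x)$, substitutes into \eqref{eqn:663A_A_eqn}, and applies the kernel method; your collected coefficient of $A(x)$, namely $(1-x)(1-zx)+zx^2(1-zx)+z^2x = 1-x-zx+2zx^2+z^2x-z^2x^3$, is exactly the stated kernel, and setting $x=X$ gives $zXA(1)=(X-1)(1-zX+z^2X)$ as claimed. The paper presents this only as a sketch, and your write-up supplies the details (the kernel computation, the linearity of the remaining terms in $A(1)$, and the formal justification of substituting the power series root) in the intended way.
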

$A(1)$ has minimal polynomial
\begin{equation}
    1 - (4-z)A +2(2-z)A^2 -A^3.
\end{equation}

\subsection{Class 1420: $(-,-,>) \equiv (100,110,120,201,210)$}
\label{ssec:1420}

At the start of this section we note that there is a solution to this sequence given on the OEIS, authored by Vladimir Kruchinin on March 25, 2019:
\begin{equation}
    F(x) = \frac{3}{4-4 \sin \left(\frac{1}{3} \arcsin\left(\frac{1}{16} (27 x+11)\right)\right)}.
\end{equation}

However no citation or other explanation is given. For this reason we will briefly describe a succession rule for this class.

Inversion sequences which avoid the triple $(-,-,>)$ have no non-consecutive occurrences of the pattern 10. They thus can only have two possible forms:
\begin{itemize}
    \item[(a)] $P \circ R$, where $P$ is a prefix of $p=|P|$ 0's and $R$ (possibly empty) contains no 0's; or
    \item[(b)] $P \circ (k) \circ (0) \circ R$, where $P$ is a prefix of $p=|P|$ 0's, $(k)$ is a single positive value, and $R$ (possibly empty) contains no values less than $k$.
\end{itemize}
Define sets $\cl{A}$ and $\cl{B}$ containing inversion sequences of these two types respectively. To grow these on the left, we have the following construction:
\begin{itemize}
    \item From $a$ in $\cl{A}$, we can increment any number of $0$'s on the right of $P$ (giving something in $\cl{A}$); we can then also increment the second-rightmost remaining 0, if there is one (giving something in $\cl{B}$). 
    % Finally we prepend a 0.
    \item From $b$ in $\cl{B}$, we can increment the rightmost 0 and any number of 0's on the right of $P$ (giving something in $\cl{A}$; we can then also increment the second-rightmost remaining 0, if there is one (giving something in $\cl{B}$).
    % Finally we prepend a 0. Alternatively we can just prepend a 0 without incrementing anything, giving something in $\cl{B}$.
\end{itemize}
This can be written as
\begin{equation}
\Omega_{(-,-,>)}:\left\{\begin{aligned}
(0)_a \\
(p)_a &\leadsto (p+1-i)_a &&\text{for}\; i \in [0, p] \\
&\leadsto (p+1-i)_b &&\text{for}\; i \in [2, p] \\[1ex]
(p)_b &\leadsto (p+1 - i)_a &&\text{for}\; i \in [0, p] \\
&\leadsto (p+1)_b \\
&\leadsto (p+1-i)_b &&\text{for}\; i \in [2, p].
\end{aligned}\right.
\label{eqn:rule1420}
\end{equation}
Using generating functions $A(z,x) \equiv A(x)$ and $B(z,x)\equiv B(x)$, where $z$ marks length and $x$ marks $p$, we get the pair of functional equations
\begin{align}
    A(x) &= 1+\frac{zx}{1-x}\left[A(1) - xA(x)\right] + \frac{zx}{1-x}\left[B(1)-xB(x)\right] \label{eqn:1420_Aeqn}\\
    B(x) &= \frac{z}{1-x}\left[xA(1)-A(x)\right] + z + \frac{z}{1-x}\left[xB(1)-B(x)\right]+zx B(x)\label{eqn:1420_Beqn}
\end{align}
Note that the `$+z$' in \eqref{eqn:1420_Beqn} compensates for the empty sequence in $\cl{A}$ which has $p=0$.

Eliminating $B(x)$ between these two equations gives something which can be readily solved using the kernel method. We find the following.
\begin{thm} The generating function for class $\cl{I}(-,-,>)$ is given by
\begin{align}
    F(z) &= A(1)+B(1) = \frac{(1+z)(X-1)-z(1-z)X^2}{z(1+z)X} \\
    &= 1 + z + 2 z^2 + 6 z^3 + 21 z^4 + 81 z^5 + 332 z^6 + 1420 z^7 + 
 6266 z^8 + 28318 z^9 + 130412 z^{10} + \dots \notag
\end{align}
where $X \equiv X(z) = 1+2z+5z^2+17z^3+64z^4+\dots$ is the only power series root of the kernel
\begin{equation}
    1-x+z-zx+2zx^2-z^2x^3.
\end{equation}
\end{thm}
$F(z)$ has minimal polynomial $-(z+1)F^3 + 4F^2 - 4F + 1$.

\subsection{Class 1833A: $(-,\neq,>) \equiv (110,120,201,210)$}
\label{ssec:1833A}

It was proved in \cite{martinez_patterns_2018} that classes 1833A $(-,\neq,>)$ and 1833B $(\neq,-,>)$ are Wilf-equivalent, but actual solutions to the counting sequences or generating functions were not found. Here we will present a construction for growing class 1833A on the left, and solve the resulting functional equations to obtain an algebraic generating function. We note as an aside that we have also found a construction for growing on the right, but it is somewhat more complicated.

First note that any inversion sequence in $\cl{I}(-,\neq,>)$ can have at most two consecutive runs of 0's, or else a pattern 110, 120 or 210 would be formed. We thus give each sequence the label $(p,s)$, where $p$ is the length of the first run of 0's and $s$ is the length of the second run of 0's (with $s=0$ if there is only one run of 0's). Call these runs of 0's the \emph{prefix} $P$ and \emph{suffix} $S$ (where we abuse the usual meaning of `suffix', since the sequence may actually end with a non-zero term).

Now observe that
\begin{itemize}
    \item[(i)] we cannot increment any 0's in $P$ before incrementing all 0's in $S$, or else the pattern 120 would be formed;
    \item[(ii)] we must increment all 0's in $S$ simultaneously, or else patterns 201 or 210 would be formed; and
    \item[(iii)] we can increment a consecutive run of 0's on the right of $P$, plus a single additional 0 not on the right (creating a non-empty suffix).
\end{itemize}
This gives the succession rule
\begin{equation}
\Omega_{(-,\neq,>)}:\left\{\begin{aligned}
(0,0) \\
(p,s) &\leadsto (p+1,s) \\
&\leadsto (p+1,0) &&\text{if } s>0\\
&\leadsto (p-\ell,k) &&\text{for}\; \ell \in [0,p-1], \; k \in[0,\ell]. \\
\end{aligned}\right.
\label{eqn:rule1833A}
\end{equation}
With $\alpha_{n,p,s}$ the number of inversion sequences of length $n$ with label $(p,s)$, and corresponding generating function $A(z,x,y) = \sum_{n,p,s}\alpha_{n,p,s}z^n x^p y^s$, the rule \eqref{eqn:rule1833A} can be encoded as the functional equation
\begin{multline}\label{eqn2:Axy}
    A(x,y) = 1 + zxA(x,y) + zx\left[A(x,1)-A(x,0)\right] + \frac{zx^2}{(y-x)(1-x)}A(x,1) \\ + \frac{zx}{(1-y)(1-x)}A(1,1) - \frac{zyx}{(y-x)(1-y)}A(y,1),
\end{multline}
where we have omitted the $z$ argument everywhere for brevity.

Set $y=0$ and use the resulting equation to eliminate $A(x,0)$, then go back and set $y=0$ and $x=y$ and use this to eliminate $A(y,1)$. We get
\begin{multline}\label{eqn:mainffe_1833A}
    0=-\frac{x z  \left(y x z-x^2z+1\right)}{(1-x) (y-x)}A(1,1)+\frac{x z  \left(y x^2 z-y x+y-x^3 z+x^2\right)}{(1-x) (y-x)}A(x,1)-(1-xz) A(x,y)\\ +\frac{x }{y (y-x)}A(y,0)-\frac{y^2 x z-y^2-y x^2 z+y x+x}{y (y-x)}
\end{multline}
Now we can set $y=1$, giving
\begin{multline}\label{eqn2:nox}
    0 = \frac{x z  \left(x^2 z-x z-1\right)}{(1-x)^2}A(1,1)-\frac{\left(x^4 z^2-x^3 z^2-2 x^3 z+3 x^2 z+x^2-2 x z-2 x+1\right) }{(1-x)^2}A(x,1)\\+\frac{x }{1-x}A(1,0)+\frac{x^2 z-x z-2 x+1}{1-x}.
\end{multline}
The coefficient of $A(x,1)$ has two roots which are Puiseux series in $z$. (They can be written with radicals but the expressions are very large.) Near $z=0$ they look like
\begin{align}
    X_1 &= 1 - z^{1/2} + z -2z^{3/2} + \frac72z^2-\frac{13}{2}z^{5/2} + 13z^3 + \dots \\
    X_2 &= 1 + z^{1/2} + z +2z^{3/2} + \frac72z^2+\frac{13}{2}z^{5/2} + 13z^3 + \dots
\end{align}
The other two roots diverge as $z\to0$.

We can substitute both of these into \eqref{eqn2:nox} to get two equations relating $A(1,1)$ and $A(1,0)$. These can then be solved. 
\begin{thm}
The generating function for class $\cl{I}(-,\neq,>)$ is given by
\begin{align}
    A(1,1) &= \frac{(X_1-1)(X_2-1) \left(zX_1X_2-1\right)}{X_1 X_2 z (X_1 X_2 z-X_1 z-X_2 z+z+1)} \\
    &= 1 + z + 2 z^2 + 6 z^3 + 22 z^4 + 90 z^5 + 396 z^6 + 1833 z^7 + 
 8801 z^8 + 43441 z^9 + 219092 z^{10}+\dots
\end{align}
\end{thm}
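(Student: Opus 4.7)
The strategy is the classical kernel method applied to the functional equation \eqref{eqn2:nox}, which relates three unknown series: the bivariate $A(x,1)$ and the univariate $A(1,1)$ and $A(1,0)$. Since I have one equation and three unknowns, I need two additional equations in $z$ alone. These will come from substituting values of $x$ that kill the coefficient of $A(x,1)$, namely the kernel
\begin{equation*}
K(x,z) := x^4 z^2 - x^3 z^2 - 2 x^3 z + 3 x^2 z + x^2 - 2 x z - 2 x + 1.
\end{equation*}
As a polynomial of degree $4$ in $x$, this has four Puiseux-series roots; exactly two ($X_1,X_2$) have constant term $1$, while the other two diverge as $z\to 0$. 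Only $X_1,X_2$ are formally admissible, since $A(x,1)$ is a polynomial in $x$ at every fixed order in $z$, so $A(X_i,1)$ is a bona fide element of $\mathbb{Q}[[z^{1/2}]]$, whereas the divergent roots would give meaningless expressions.

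Substituting $x=X_i$ for $i=1,2$ in \eqref{eqn2:nox} and clearing the $(1-x)^2$ denominator produces the linear relation
\begin{equation*}
X_i z(X_i^2 z - X_i z - 1)\,A(1,1) + X_i(1-X_i)\,A(1,0) = -(1-X_i)(X_i^2 z - X_i z - 2X_i + 1).
\end{equation*}
Because $X_1\neq X_2$, the resulting $2\times 2$ linear system is non-degenerate, and Cramer's rule gives rational expressions for $A(1,1)$ and $A(1,0)$ in terms of $X_1$ and $X_2$.

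The remaining, and principal, step is algebraic simplification. The output of Cramer's rule contains $X_1$ and $X_2$ individually — each a series in $z^{1/2}$ — whereas the generating function $A(1,1)$ must lie in $\mathbb{Z}[[z]]$. The remedy is to rewrite numerator and denominator in the elementary symmetric combinations $X_1+X_2$ and $X_1X_2$, which are invariant under the Puiseux involution $z^{1/2}\mapsto -z^{1/2}$ interchanging $X_1$ and $X_2$. Using the Vieta relations provided by $K(x,z)$ to express these combinations algebraically in $z$, substantial cancellation yields the stated compact form, where one also uses the identity
\begin{equation*}
z X_1 X_2 - z X_1 - z X_2 + z + 1 = z(X_1-1)(X_2-1) + 1.
\end{equation*}
I expect this simplification to be the main technical hurdle and to require computer algebra. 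Finally, expanding the closed-form expression as a series in $z$ and matching the initial coefficients $1,1,2,6,22,90,396,1833,\dots$ of \href{https://oeis.org/A279568}{OEIS A279568} serves as an independent verification of the elimination.
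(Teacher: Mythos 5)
Your proposal is correct and follows essentially the same route as the paper: apply the kernel method to \eqref{eqn2:nox}, substitute the two Puiseux-series roots $X_1,X_2$ of the quartic kernel (the two roots finite at $z=0$), and solve the resulting $2\times 2$ linear system for $A(1,1)$ and $A(1,0)$, with the final expression symmetric in $X_1,X_2$. Your added details — the justification for why only the bounded roots are admissible, the explicit linear relation, and the symmetric-function simplification — are elaborations of exactly what the paper does implicitly with computer algebra.
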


The minimal polynomial of $A(1,1)$ is
\begin{multline}
    (3z^3 + z)A^6 - (2z^3 + 6z^2 + 3z + 1)A^5 + (11z^2 + 5z + 5)A^4 - (5z^2 + 7z + 10)A^3 + (6z + 10)A^2 - (2z + 5)A + 1.
\end{multline}

\subsection{Class 733: $(\neq,\neq,\geq) \equiv (010,101,120,201,210)$}
\label{ssec:733}

Class 733 is another class for which we have found constructions for growing on the left and on the right. Similar to class 1833A, we find growing on the left to be more straightforward and we will not explain the other construction here. The generating function for this class is also algebraic.

The construction for class 733 is similar to that of 1833A. Let $\cl{A} = \cl{I}(\neq,\neq,\geq)$, and let $\cl{B} \supset \cl{A}$ be those inversion sequences which avoid $(010,101,120,201,210)$, except possibly for the pattern 010 where the 0's have value 0. As with class 1833A, note that an inversion sequence in $\cl{B}$ can have at most two consecutive runs of 0's, or else a pattern 101, 120 or 210 would be formed. Thus define $P,S,p$ and $s$ as in \cref{ssec:1833A}.

We then make the following observations:
\begin{itemize}
    \item[(i)] we must increment all 0's in $S$ before incrementing anything in $P$, or else a pattern 120 or 010 would be formed;
    \item[(ii)] we must increment all 0's in $S$ simultaneously, or else patterns 201 or 210 would be formed;
    \item[(iii)] if $s=0$ then we can increment any consecutive run of 0's within $P$.
\end{itemize}
With label $(p,s)$, we thus have the succession rule
\begin{equation}
\Omega_{(\neq,\neq,\geq)}:\left\{\begin{aligned}
(0,0) \\
(p,s) &\leadsto (p+1,s) \\
&\leadsto (p+1,0) &&\text{if}\; s>0\\
(p,0) &\leadsto (p-\ell,k) &&\text{for}\; \ell \in [0,p-1], \; k \in[0,\ell]. \\
\end{aligned}\right.
\label{eqn:rule733}
\end{equation}

With $B(z,x,y)$ the generating function for $\cl{B}$, with $z$ marking length, $x$ marking $p$ and $y$ marking $s$, we can write the above succession rule as (omitting the $z$ argument for brevity)
\begin{multline}
    B(x,y) = 1 + zxB(x,y) + zx\left[B(x,1)-B(x,0)\right] + \frac{zx^2}{(1-x)(y-x)}B(x,0) \\ + \frac{zx}{(1-y)(1-x)}B(1,0)-\frac{zxy}{(1-y)(y-x)}B(y,0).
\end{multline}
The generating function for $\cl{I}(\neq,\neq,\geq)$ is then $B(1,0)$.

We solve this in a similar way to \eqref{eqn:mainffe_1833A}. First set $y=0$ and use the resulting equation to eliminate $B(x,0)$; then set $y=0$ and $x=y$ and use this to eliminate $B(y,0)$. Finally set $y=1$, giving
\begin{multline} \label{eqn:733_main_kernel_eqn}
0= -\frac{\left(x^4 z^2-x^3 z^2-2 x^3 z-x^2 z^2+3 x^2 z+x^2-x z-2 x+1\right) }{(1-x) (x z-x+1)}B(x,1) - \frac{x\left(x z^2-1\right)}{x z-x+1}B(1,0) \\ -\frac{x z }{1-x}B(1,1)+\frac{x^2 z-2 x+1}{x z-x+1}.
\end{multline}
The coefficient of $B(x,1)$ is quartic in $x$. Two of its roots are power series in $z$, namely
\begin{align}
    X_1 &= \frac{-\sqrt{-2 \left(\sqrt{5}-3\right) z^2-4 \left(\sqrt{5}+3\right) z+4}-\sqrt{5} z+z+2}{4 z} \\
    &= 1 +\frac{1}{2} \left(1+\sqrt{5}\right) z +\left(2+\sqrt{5}\right) z^2 +\left(7+3 \sqrt{5}\right) z^3 +\left(25+11 \sqrt{5}\right) z^4 + \left(96+43 \sqrt{5}\right) z^5 + \dots\notag
\end{align}
and 
\begin{align}
    X_3 &= \frac{-\sqrt{2 \left(\sqrt{5}+3\right) z^2+4 \left(\sqrt{5}-3\right) z+4}+\sqrt{5} z+z+2}{4 z} \\
    &= 1 +\frac{1}{2} \left(1-\sqrt{5}\right) z +\left(2-\sqrt{5}\right) z^2 +\left(7-3 \sqrt{5}\right) z^3 +\left(25-11 \sqrt{5}\right) z^4 + \left(96-43 \sqrt{5}\right) z^5 + \dots\notag
\end{align}
We can substitute both of these into \eqref{eqn:733_main_kernel_eqn} to give two equations relating $B(1,0)$ and $B(1,1)$. These can be solved to give the following.
\begin{thm}
The generating function for $\cl{I}(\neq,\neq,\geq)$ is given by
\begin{align}
    B(1,0) &= -\frac{X_1 z+X_3 z-1}{X_1 X_3 z^2-X_1 z-X_3 z-z+1} \\
    &= 1 + z + 2 z^2 + 5 z^3 + 15 z^4 + 51 z^5 + 188 z^6 + 733 z^7 + 
 2979 z^8 + 12495 z^9 + 53708 z^{10} + \dots
\end{align}
\end{thm}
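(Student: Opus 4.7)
The plan is to follow the kernel-method template that the paper already applied to class 1833A, starting from the succession rule $\Omega_{(\neq,\neq,\geq)}$ and the functional equation for $B(z,x,y)$ displayed just before \eqref{eqn:733_main_kernel_eqn}. First I would carry out the three elimination steps flagged in the text: set $y=0$ and use the resulting identity to eliminate $B(x,0)$; then specialise $y=0$ together with $x=y$ to eliminate $B(y,0)$; and finally set $y=1$, arriving at \eqref{eqn:733_main_kernel_eqn}, in which only the three unknown series $B(x,1)$, $B(1,0)$ and $B(1,1)$ remain.

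Next I would apply the kernel method to \eqref{eqn:733_main_kernel_eqn}. The coefficient of $B(x,1)$ is a rational function of $x$ whose numerator is quartic in $x$; by inspecting the quartic at $z=0$ one sees that two of its roots extend to honest power series in $z$ with constant term $1$, namely $X_1$ and $X_3$ exhibited in the excerpt, while the other two roots diverge as $z\to 0$ and so cannot be substituted. Because $B(x,1)$ is a formal power series in $z$ whose coefficients are polynomials in $x$, the evaluations $B(X_1,1)$ and $B(X_3,1)$ are well-defined as formal power series in $z$. Substituting $x=X_i$ for $i\in\{1,3\}$ into \eqref{eqn:733_main_kernel_eqn} therefore annihilates the $B(x,1)$ term and yields two linear equations
\begin{equation}
X_i(X_i z^2-1)(1-X_i)\,B(1,0) \;+\; X_i z(X_iz-X_i+1)\,B(1,1) \;=\; (1-X_i)(X_i^2 z - 2X_i + 1), \qquad i \in \{1,3\},
\end{equation}
in the two unknowns $B(1,0)$ and $B(1,1)$. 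This $2\times 2$ linear system can be solved by Cramer's rule, and the symmetric functions $X_1+X_3$ and $X_1X_3$ (read off from the relevant quadratic factor of the quartic kernel via Vieta's formulas) are the natural building blocks in which to express the resulting rational function.

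The bulk of the work, and the main source of possible error, is the final algebraic simplification: the raw Cramer's rule output involves $X_1$ and $X_3$ asymmetrically and must be collapsed into the clean symmetric shape $-(X_1 z + X_3 z - 1)/(X_1 X_3 z^2 - X_1 z - X_3 z - z + 1)$. This is routine with a computer algebra system but requires care to track the correct power-series branches of $X_1$ and $X_3$ and to verify that the denominator $X_1 X_3 z^2 - X_1 z - X_3 z - z + 1$ is a non-zero formal power series (checking that it equals $1$ at $z=0$ suffices), so that the quotient is meaningful as a formal series. Finally I would confirm the result by expanding $B(1,0)$ to order $z^{10}$ and matching against the displayed sequence $1,1,2,5,15,51,188,733,2979,12495,53708,\dots$, which also cross-checks the correctness of the succession rule itself.
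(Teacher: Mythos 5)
Your proposal is correct and follows essentially the same route as the paper: the same three elimination steps leading to \eqref{eqn:733_main_kernel_eqn}, substitution of the two power-series kernel roots $X_1, X_3$ to kill the $B(x,1)$ term, and solution of the resulting $2\times 2$ linear system for $B(1,0)$ and $B(1,1)$ (your displayed linear equations agree exactly with what \eqref{eqn:733_main_kernel_eqn} gives at $x=X_i$). One small caution on a side remark: $X_1$ and $X_3$ are roots of \emph{different} quadratic factors of the quartic over $\mathbb{Q}(\sqrt{5})$ (each factor pairs one power-series root with one divergent root), so the symmetric functions $X_1+X_3$ and $X_1X_3$ come from the Galois symmetry $\sqrt{5}\mapsto-\sqrt{5}$ or from a power-series (Weierstrass-type) factorization, not from Vieta's formulas applied to a single quadratic factor over $\mathbb{Q}(z)$ --- this affects only the bookkeeping of the final simplification, not the correctness of the argument.
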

The minimum polynomial of $B(1,0)$ is
\begin{equation}
    (4z^3 - 2z^2 + z)B^4 + (2z^3 - 7z^2 + z - 1)B^3 - (z^3 - 10z^2 + 3z - 3)B^2 - (z^2 + z + 3)B + 2z + 1.
\end{equation}

\subsection{Class 214: $(-,\geq,\geq) \equiv (000,010,100,110,120,210)$}
\label{ssec:214}

Given that inversion sequences in this class have to avoid a large number (6) of patterns, it is not surprising that the growth rule here is quite restrictive. Nevertheless we have not been able to solve the corresponding generating function and we do not believe it to be algebraic.

Let $\cl{A} = \cl{I}(-,\geq,\geq)$ and define $\cl{B} \supset \cl{A}$ to be the set of those inversion sequences which avoid all the necessary patterns, except possibly for 000, 010 and 100 when the 0's have value 0. Any inversion sequence in $\cl{B}$ can have at most two consecutive runs of 0's (call them $P$ and $S$), so we assign label $(p,s)$ where $p = |P|$ and $s = |S|$. We make the following observations:
\begin{itemize}
    \item if $s=0$ we can increment at most two 0's, and if we do increment two then one of these must be the rightmost 0;
    \item if $s>0$ then the only 0 we can increment is the rightmost one.
\end{itemize}
We get the following succession rule:
\begin{equation}
\Omega_{(-,\geq,\geq)}:\left\{\begin{aligned}
(0,0) \\
(p,s) &\leadsto (p+1,s) \\
&\leadsto (p+1,s-1) &&\text{if}\; s>0\\
(p,0) &\leadsto (p-\ell,\ell) &&\text{for}\; \ell \in [0,p-1] \\
&\leadsto(p-\ell-1,\ell) &&\text{for}\; \ell \in [0,p-2]
\end{aligned}\right.
\label{eqn:rule214}
\end{equation}
The inversion sequences in $\cl{A}$ are those with labels $(p,0)$ with $p \leq 2$.

We can write a fairly simple-looking functional equation satisfied by $B(z,x,y)$ (with $z$ marking length, $x$ marking $p$ and $y$ marking $s$):
\begin{multline}
    B(x,y) = 1 + zxB(x,y) +\frac{zx}{y}\left[B(x,y)- B(x,0)\right]  + \frac{zx}{y-x}\left[B(y,0)-B(x,0)\right] \\ + \frac{z}{y(y-x)}\left[xB(y,0)-yB(x,0)\right] + \frac{z}{y},
\end{multline}
where we have omitted the $z$ argument for brevity. Gathering like terms,
\begin{equation}
    0 = y+z - (y-zx-zxy)B(x,y) + \frac{xz(1+y)}{y-x}B(y,0) - \frac{z(y+2xy-x^2)}{y-x}B(x,0).
\end{equation}
However unfortunately we have been unable to solve this equation.

\subsection{Class 1509: $(-,\geq,>) \equiv (100,110,120,210)$}
\label{ssec:1509}

As with class 214, we can construct a fairly simple succession rule for $\cl{I}(-,\geq,>)$, but we have been unable to solve the corresponding generating function, and we do not believe it is algebraic. Let $\cl{A} = \cl{I}(-,\geq,>)$, and define $\cl{B} \supset \cl{A}$ to be the set of inversion sequences which avoid all the patterns $(100,110,120,210)$, except possibly for the pattern 100 where the 0's take value 0.

Once again inversion sequences in $\cl{B}$ can have at most two consecutive runs of 0's. Define $P,S,p$ and $s$ as per the previous sections. We have the following rules about incrementing 0's:
\begin{itemize}
    \item if $s=0$ we can increment any number (including zero) of 0's on the right of $P$, and/or one other 0 in $P$;
    \item if $s=1$ the 0 in $S$ must be incremented, and then we can increment any number (including zero) of 0's on the right of $P$, and/or one other 0 in $P$;
    \item if $s>1$ then the only 0 we can increment is the rightmost one.
\end{itemize}
Assigning label $(p,s)$ as with per the previous sections, we have the rule
\begin{equation}
\Omega_{(-,\geq,>)}:\left\{\begin{aligned}
(0,0) \\
(p,s) &\leadsto (p+1,s) \\
&\leadsto (p+1,s-1) &&\text{if}\; s>0\\
&\leadsto (p-i,0) &&\text{if}\; s=0 \text{ or }1, \text{ for } i \in [0,p-1] \\
&\leadsto(p-\ell,\ell-k) &&\text{if}\; s=0 \text{ or }1, \text{ for } \ell \in [1,p-1], \; k \in [0,\ell-1].
\end{aligned}\right.
\label{eqn:rule1509}
\end{equation}
The elements of $\cl{A}$ are then those inversion sequences with label $(p,0)$ or $(p,1)$.

We can write a functional equation satisfied by the generating function for $\cl{B}$, but we are unable to solve it. Define $B_{[1]}(x) = [w^1]B(x,w)$ and similarly $B_{[\leq1]}(x) = B(x,0) + B_{[1]}(x)$. Then we have
\begin{multline}\label{eqn:1509_big_eqn}
    B(x,y) = 1 + zx B(x,y) + \frac{zx}{y}\left[B(x,y) - B(x,0)\right] + \frac{zx}{1-x}\left[B_{[\leq1]}(1)-B_{[\leq1]}(x)\right] \\ - \frac{zxy}{(1-x)(x-y)}B_{[\leq1]}(x) + \frac{zxy}{(1-x)(1-y)}B_{[\leq 1]}(1) + \frac{zxy}{(1-y)(x-y)}B_{[\leq1]}(y).
\end{multline}
Using the fact that
\begin{equation}
    B(x,0) = 1 + zxB(x,0) + zxB_{[1]}(x) + \frac{zx}{1-x}\left[B_{[\leq1]}(1)-B_{[\leq1]}(x)\right]
\end{equation}
we can rewrite \eqref{eqn:1509_big_eqn} as
\begin{multline}
    B(x,y) = B(x,0) - zxB_{[1]}(x) + \frac{zx(y+1)}{y}\left[B(x,y) - B(x,0)\right] \\ - \frac{zxy}{(1-x)(x-y)}B_{[\leq1]}(x) + \frac{zxy}{(1-x)(1-y)}B_{[\leq 1]}(1) + \frac{zxy}{(1-y)(x-y)}B_{[\leq1]}(y).
\end{multline}

\subsection{Class 1953A: $(-,>,>) \equiv (110,120,210)$}
\label{ssec:1953A}

This class was shown in \cite{martinez_patterns_2018} to be Wilf-equivalent to class 1953B $(100,120,210)$, but no enumeration was given. Here we give a succession rule for growing this class on the left. A fairly compact functional equation for the generating function can be given, but we are unable to solve it and we do not believe it to be algebraic.

As with the classes in the previous sections, inversion sequences in this class can have at most two consecutive runs of 0's. We again define $P,S,p$ and $s$ as before. This time:
\begin{itemize}
    \item we cannot increment any 0's in $P$ before incrementing all 0's in $S$;
    \item the 0's in $S$ must be incremented from right to left (one or more at a time);
    \item if we increment all 0's in $S$ (or if $s=0$) then we can increment any number of 0's on the right of $P$, plus one additional 0 somewhere in $P$.
\end{itemize}
With label $(p,s)$, we thus get the succession rule
\begin{equation}
\Omega_{(-,>,>)}:\left\{\begin{aligned}
(0,0) \\
(p,s) &\leadsto (p+1,s-i) &&\text{for }\; i \in [0,s] \\
&\leadsto (p+1-\ell,k) &&\text{for}\; \ell\in[1,p], \; k \in[0,\ell-1].
\end{aligned}\right.
\label{eqn:rule1953A}
\end{equation}
Here every label $(p,s)$ is associated with a valid member of $\cl{I}(-,>,>)$. This succession rule converts to the following functional equation satisfied by $B(z, x, y)$ (with $z$ marking length, $x$ marking $p$ and $y$ marking $s$):
% \begin{multline}
%     B(x, y) = 1 + \frac{zx}{1-y}B(1, 0) - \frac{zxy}{1-y}B(x, y) + \frac{zxy}{(1-y)(y-x)}B(x, 0) - \frac{zxy}{(1-y)(y-x)}B(y, 0),
% \end{multline}
\begin{multline}
    B(x,y) = 1 - \frac{zxy}{1-y}B(x,y) - \frac{xz(x^2+y-2xy)}{(1-x)(x-y)(1-y)}B(x,1) \\ + \frac{zxy}{(1-y)(x-y)}B(y,1) + \frac{zx}{(1-x)(1-y)}B(1,1)
\end{multline}
where we omit the argument $z$ for brevity.

\subsection{Class 759: $(\leq, \neq, \geq) \equiv (010,110,120)$}
\label{ssec:759}

Let $\cl{A}$ be the class of inversion sequences avoiding the triple $(\leq, \neq, \geq)$. It will be convenient to also define $\cl{B} \supset \cl{A}$ to be the set of inversion sequences which avoid $(010,110,120)$, except possibly for occurrences of 010 where the 0's have value 0. For $a \in \cl{A}$ (or $\cl{B}$), let $P$ be the initial run of 0's and let $p = |P|$. We make the following observations:
\begin{itemize}
    \item If $a$ avoids 010, all the 0's of $a$ must be in $P$. However, during the process of growing $a$ on the left, there may have been intermediate stages from $\cl{B}\setminus \cl{A}$.
    \item If $a \in \cl{A}$, then we can grow on the left by:
    \begin{itemize}
        \item[(i)] incrementing some number of 0's on the right of $P$ (giving something in $\cl{A}$),
        \item[(ii)] possibly incrementing one additional 0 not on the right of the remaining 0's (giving something in $\cl{B}$, which will later need to be converted back to $\cl{A}$),
        \item[(iii)] finally prepending a 0.
    \end{itemize}
    \item If $b \in \cl{B}$ then we must increment all the 0's not in $P$ (in some order) before we can increment anything in $P$ (otherwise we would create a 110 or 120 pattern). The order can be anything so long as we do not create a 010, 110 or 120 pattern.
\end{itemize}
At this point we make use of the idea of Pantone's \emph{commitments} \cite{Pantone}. When performing action (ii) above, suppose there are $k\geq1$ 0's which are not in the new $P$. At that moment \emph{we can determine ahead of time} in which order those $k$ 0's will be incremented. We do this by assigning each of them a positive integer, according to the following conditions:
\begin{itemize}
    \item[R1.] Each integer $1,2,\dots,b$ is used at least once, for some $b\in[1,k]$.
    \item[R2.] The resulting word of length $k$ on the alphabet $\{1,\dots,b\}$ avoids the patterns 212, 112 and 213.
\end{itemize}
Here the assignment of integers determines the order of incrementing: those 0's assigned 1 will be incremented first, followed by those assigned 2, and so on.
See \cref{fig:759_commitments} for an example.

\begin{figure}
\centering
    \begin{subfigure}{0.30\textwidth}
        \centering
        \begin{tikzpicture}[mnode/.style={circle,draw=black,fill=black,inner sep=0pt,minimum size=4pt}, scale=0.7]
            \draw[step=0.5cm, gray,thin] (-0.5,0) grid (5.4,2.9);
            \draw (0,0) node[mnode]{}
                \foreach \x/\y in {0/0, 0.5/0, 1/0, 1.5/0, 2/0, 2.5/0, 3/0, 3.5/0.5, 4/1.5}{
            -- (\x,\y) node[mnode]{}
            };
            \draw[->] (-0.5, 0) -- (5.5, 0);
            \draw[->] (-0.5, 0) -- (-0.5, 3);

            \node at (1,0) [below] {\phantom{\footnotesize \color{blue}{1}}};
            
\end{tikzpicture}
        \caption{}
    \end{subfigure}
    \begin{subfigure}{0.30\textwidth}
        \centering
        \begin{tikzpicture}[mnode/.style={circle,draw=black,fill=black,inner sep=0pt,minimum size=4pt}, scale=0.7]
            \draw[step=0.5cm, gray,thin] (-0.5,0) grid (5.4,2.9);
            \draw (0,0) node[mnode]{}
                \foreach \x/\y in {0/0, 0.5/0.5, 1/0, 1.5/0, 2/0, 2.5/0, 3/0, 3.5/0, 4/1, 4.5/2}{
            -- (\x,\y) node[mnode]{}
            };
            \draw[->] (-0.5, 0) -- (5.5, 0);
            \draw[->] (-0.5, 0) -- (-0.5, 3);

            \node at (1,0) [below] {\footnotesize \color{blue}{4}};
            \node at (1.5,0) [below] {\footnotesize \color{blue}{1}};
            \node at (2,0) [below] {\footnotesize \color{blue}{2}};
            \node at (2.5,0) [below] {\footnotesize \color{blue}{3}};
            \node at (3,0) [below] {\footnotesize \color{blue}{3}};
            \node at (3.5,0) [below] {\footnotesize \color{blue}{1}};
            
\end{tikzpicture}
        \caption{}
    \end{subfigure}
    \begin{subfigure}{0.30\textwidth}
        \centering
        \begin{tikzpicture}[mnode/.style={circle,draw=black,fill=black,inner sep=0pt,minimum size=4pt}, scale=0.7]
            \draw[step=0.5cm, gray,thin] (-0.5,0) grid (5.4,2.9);
            \draw (0,0) node[mnode]{}
                \foreach \x/\y in {0/0, 0.5/0, 1/1, 1.5/0, 2/0.5, 2.5/0, 3/0, 3.5/0, 4/0.5, 4.5/1.5, 5/2.5}{
            -- (\x,\y) node[mnode]{}
            };
            \draw[->] (-0.5, 0) -- (5.5, 0);
            \draw[->] (-0.5, 0) -- (-0.5, 3);

            \node at (1.5,0) [below] {\footnotesize \color{blue}{4}};
            % \node at (2,0) [below] {\footnotesize \color{blue}{1}};
            \node at (2.5,0) [below] {\footnotesize \color{blue}{2}};
            \node at (3,0) [below] {\footnotesize \color{blue}{3}};
            \node at (3.5,0) [below] {\footnotesize \color{blue}{3}};
            % \node at (4,0) [below] {\footnotesize \color{blue}{1}};
            
\end{tikzpicture}
        \caption{}
    \end{subfigure}
    \caption{An illustration of the growth of Class 759 using commitments. The sequence in (a) is in $\cl{A}$, with $p=7$. In (b) we have incremented a 0 not on the left of $P$, to get something in $\cl{B}$ with $k=6$. The number of commitments is $c=4$, with the word on the alphabet $\{1,2,3,4\}$ avoiding 212, 112 and 213. In (c) we have fulfilled the first commitment.}
    \label{fig:759_commitments}
\end{figure}

We get the following lemma, whose proof can be found in \cref{sec:comb_proofs}.
\begin{lem}\label{lem:words_R1R2}
    The number of words of length $k$ which satisfy conditions R1 and R2 above is
    \begin{equation}
        a_{k,b} = \binom{k-1}{k-b}C_b,
    \end{equation}
    where $C_b$ is the $b^\text{th}$ Catalan number.
\end{lem}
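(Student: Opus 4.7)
My plan is to set up a bijection between words satisfying R1--R2 and pairs $(\pi, \mathbf{c})$, where $\pi$ is a 213-avoiding permutation of $\{1, \ldots, b\}$ and $\mathbf{c} = (c_1, \ldots, c_b)$ is a composition of $k$ into $b$ positive parts. Since 213-avoiding permutations are counted by $C_b$ and such compositions by $\binom{k-1}{b-1} = \binom{k-1}{k-b}$, this will yield the claimed formula. The forward map sends a valid word $w$ to $(\pi(w), \mathbf{c}(w))$, where $\pi(w)_i$ is the $i$-th distinct letter to appear in $w$ and $c_i$ is its multiplicity. Checking that $\pi(w)$ avoids 213 is immediate: any 213-pattern in $\pi(w)$ would lift, via first occurrences, to a 213-pattern in $w$, contradicting R2.

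The key tool for the inverse is a structural lemma: if a letter $v$ appears $s \ge 2$ times in $w$ at positions $i_1 < i_2 < \cdots < i_s$, then $i_2, \ldots, i_s$ are consecutive (a \emph{block}) and every value after $i_s$ is strictly less than $v$. Combining 212-avoidance (values strictly between two $v$'s must exceed $v$) with 112-avoidance (after a repeat of $v$, no value may exceed $v$) forces any position strictly between consecutive $v$'s, from the second onward, to carry a value both $>v$ and $\le v$, so no such position exists. Applied to $v = 1$, this places the block of $1$'s at the very end of $w$.

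The inverse $\Psi$ is then defined recursively on $b$. Given $(\pi, \mathbf{c})$, let $p$ be the position of $1$ in $\pi$; since $\pi$ avoids 213, the prefix $\sigma = \pi_1 \cdots \pi_{p-1}$ is a 213-avoiding permutation of $\{b-p+2, \ldots, b\}$ and the suffix $\tau = \pi_{p+1} \cdots \pi_b$ is a 213-avoiding permutation of $\{2, \ldots, b-p+1\}$. I would set
\[
\Psi(\pi, \mathbf{c}) = \Psi(\sigma, (c_1, \ldots, c_{p-1})) \cdot 1 \cdot \Psi(\tau, (c_{p+1}, \ldots, c_b)) \cdot 1^{c_p-1},
\]
with $\Psi$ applied recursively to each sub-alphabet after relabelling. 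The main obstacle will be verifying that this decomposition is \emph{forced}: in any valid $w$ with $\Phi(w) = (\pi, \mathbf{c})$, every letter greater than $1$ must appear entirely before the first $1$ or entirely after the $1$-block. The structural lemma rules out a block straddling the first $1$ (since the $1$ between first occurrence and block would violate the requirement that intermediate values exceed the letter), while the $\pi$-order places first occurrences of suffix letters after that of $1$. A routine pattern check then confirms that any forbidden triple in $\Psi(\pi, \mathbf{c})$ either lies in one of the two sub-words (handled inductively) or spans the central $1$, which is impossible given the partition of values across the prefix and suffix. This establishes the bijection and yields $a_{k,b} = C_b \cdot \binom{k-1}{k-b}$.
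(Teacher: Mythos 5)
Your proof is correct and takes essentially the same approach as the paper: both identify a valid word with a pair consisting of the 213-avoiding permutation of first occurrences ($C_b$ choices) and the composition recording multiplicities ($\binom{k-1}{k-b}$ choices), with the placement of all repeated letters forced by 212- and 112-avoidance. The paper establishes this forcedness greedily from the largest letter down, whereas you build an explicit recursive inverse around the letter $1$ --- a difference of presentation rather than substance (though note one slip of wording: letters exceeding $1$ must appear entirely before the first $1$ or entirely \emph{between} it and the terminal block of $1$'s, not ``after the $1$-block'', as your own structural lemma shows nothing follows that block).
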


We can now construct a succession rule for inversions sequences $S \in \cl{B}$ which have been assigned a commitment, i.e.\ a pattern-avoiding word $w$ as described above. To each such pair $(S,w)$  we will assign a label $(p,c)$. Here $c$ is the number of commitments remaining, i.e.\ the number of remaining steps in which we will increment the 0's not in $P$. If an inversion sequence is in $\cl{A} \subset \cl{B}$, it gets label $(p,0)$ for some $p\geq0$. If $c>0$ then our only choices are to fulfill a commitment (decreasing $c$ by 1) or not (leaving $c$ as is), as well as prepend a 0. 

If we write $\ell$ to be the number of 0's incremented in step (i) above plus $k$, then we have the succession rule
\begin{equation}
\Omega_{(\leq,\neq,\geq)}: \left\{\begin{aligned} (0,0) \\ (p,c) &\leadsto (p+1,c)\\ &\leadsto (p+1,c-1) && \text{if}\; c>0 \\
(p,0) &\leadsto (p-\ell,b)^{m_{\ell,b}} && \text{for}\; 0 \leq \ell \leq p-1,\, 0 \leq b \leq \ell, \, p \geq 1 \end{aligned} \right.
\end{equation}
where
\begin{equation} 
m_{\ell,b} = \sum_{k=b}^\ell a_{k,b} = \binom{\ell}{b}C_b.
\end{equation}
% Here the members of $\cl{A} = \cl{I}(\leq,\neq,\geq)$ are those objects with label $(p,0)$ for some $p$.

This succession rule closely resembles the rule $\Omega_{120}'$ from \cite{testart_generating_2025}, which served as inspiration. We have not attempted to rewrite $\Omega_{(\leq,\neq,\geq)}$ in terms of generating functions. As we will see in \cref{ssec:asymps_nonalg}, the asymptotics are quite complex, and we have no reason to think that the corresponding generating function is algebraic, or even D-finite.

\subsection{Class 247: $(\leq, -, \geq) \equiv (000,010,110,120)$}
\label{ssec:247}

This class is quite similar to class 759. Again we let $\cl{A}$ be the class of inversion sequences avoiding the triple of interest $(\leq, -, \geq)$, and we let $\cl{B}$ be the set of inversion sequences avoiding $(000,010,110,120)$, except possibly for occurrences of 000 or 010 where the 0's have value 0. $P$ and $p$ are as defined previously. The growth for this class is now
\begin{itemize}
    \item If $a \in \cl{A}$, we grow on the left by incrementing at most two 0's (if we increment two then one of these must be the rightmost 0), and then prepending a 0.
    \item If $b\in \cl{B}$ we must increment all the 0's not in $P$, in some order, which can be anything so long as we do not create a 000, 010, 110 or 120 pattern.
\end{itemize}
We can now implement commitments in the same way that we did for class 759. We still follow condition R1 from \cref{ssec:759}, but now R2 is replaced by
\begin{itemize}
    \item[R3.] The resulting word of length $k$ on the alphabet $\{1,\dots,b\}$ avoids the patterns 111, 212, 112 and 213.
\end{itemize}
We have the following lemma, which is again proved in \cref{sec:comb_proofs}.
\begin{lem}\label{lem:words_R1R3}
    The number of words of length $k$ which satisfy conditions R1 and R3 is
    \begin{equation}
        d_{k,b} = \binom{b}{k-b}C_b,
    \end{equation}
    where $C_b$ is the $b^\text{th}$ Catalan number.
\end{lem}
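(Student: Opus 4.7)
The plan is to count $d_{k,b}$ by first choosing which letters appear twice, then showing the number of valid arrangements for each such choice is $C_b$, independent of the choice.

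First, I would use R1 together with the 111-avoidance in R3 to deduce that each letter in $\{1,\ldots,b\}$ appears exactly once or twice; so $b\le k\le 2b$, and exactly $t:=k-b$ letters appear twice. Let $T\subseteq\{1,\ldots,b\}$ be the subset of letters appearing twice; $|T|=t$, and there are $\binom{b}{t}=\binom{b}{k-b}$ choices for $T$. It then remains to show that the number $d_T$ of valid arrangements with twice-occurring set exactly $T$ equals $C_b$ for every $T$.

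The plan for $d_T=C_b$ is a bijection with 213-avoiding permutations of $[b]$, which are classically enumerated by $C_b$. The forward map sends a valid word $w$ to the permutation $\pi(w)$ of $[b]$ obtained by reading its letters in the order of their first occurrences. To see $\pi(w)$ avoids 213, suppose it contains such a pattern: three values $u<v<u'$ appearing in the order $v,u,u'$. Then the three first-occurrence positions of $v,u,u'$ in $w$ themselves form a 213 pattern in $w$, contradicting R3.

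For the inverse map, I would show that for every pair $(\pi,T)$ with $\pi$ a 213-avoiding permutation of $[b]$, there is a unique valid word $w$ with $\pi(w)=\pi$ and twice-occurring set $T$. The 212-avoidance forces all values strictly between the two occurrences of each $v\in T$ to be $\ge v$; the 112-avoidance forces all values after the second occurrence of $v$ to be $\le v$. In particular the two occurrences of $\max T$ must be consecutive, and if $1\in T$ then the second $1$ must be the very last symbol. These observations let me peel off extreme letters one at a time and induct on $b$, using the classical Catalan recursive decomposition of 213-avoiding permutations at the position of $1$ (which splits $\pi$ into two smaller 213-avoiding permutations on disjoint value sets, with every value of one strictly above every value of the other).

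The main obstacle is the existence claim inside the inverse map: that for \emph{every} 213-avoiding $\pi$ and every $T$, the simultaneous constraints imposed by 212, 112, and 213 avoidance are jointly satisfiable. Uniqueness of $w$ is straightforward since each constraint pins down where the next symbol must go; however, confirming that the induction closes --- i.e.\ that the reduced problem after peeling off an extreme letter matches the same recursive shape without conflict between the three avoidance conditions --- is the technical core of the argument and will require careful bookkeeping of the positions allowed to carry each second occurrence.
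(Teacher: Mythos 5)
Your counting framework is the same as the paper's: R1 plus 111-avoidance forces each letter to occur once or twice, giving $\binom{b}{k-b}$ choices for the doubled set $T$; the first occurrences form a 213-avoiding permutation $\pi$ of $[b]$, counted by $C_b$; and the forward map is correct since first occurrences form a subsequence of $w$. The genuine gap is the one you flag yourself: you never prove that for every 213-avoiding $\pi$ and every $T$ there exists a valid word with those data, and the induction you sketch is not carried out. Worse, one of the structural claims you intend to drive that induction with is false: the two occurrences of $\max T$ need \emph{not} be consecutive, because letters larger than $\max T$ (which occur only once) may sit between them. For instance, with $b=2$, $k=3$, $T=\{1\}$, the word $121$ satisfies R1 and R3. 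Only the occurrences of the maximal letter $b$ of the alphabet are forced to be adjacent, which is what the paper asserts. (Your other deductions are correct: values strictly between the two copies of $v$ are $\ge v$ by 212-avoidance, and values after the second copy of $v$ are $<v$ by 112-avoidance.)

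The missing existence argument does not require induction or careful bookkeeping; 213-avoidance of $\pi$ supplies it directly. If some $u<v$ and some $w>v$ both occurred after $v$'s first occurrence, with $u$ before $w$, then $v,u,w$ would be a 213 pattern in $\pi$; hence after $v$'s first occurrence every letter greater than $v$ precedes every letter smaller than $v$. Consequently the second copy of each $v\in T$ has exactly one admissible slot --- after all occurrences of letters greater than $v$ and before any subsequent letter smaller than $v$ --- and inserting the second copies there, in decreasing order of value, produces a word in which a forbidden pattern cannot arise: a 212 would need a letter $<v$ between the two copies of $v$, a 112 would need a letter $>v$ after the second copy of $v$, and a 213 whose entries are traced back to first occurrences would force a 213 in $\pi$. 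This is precisely the paper's one-line placement argument (all occurrences of $b$ consecutive; all occurrences of $b-1$ consecutive and after those of $b$, except possibly the first $b-1$; and so on), after which $d_{k,b}=\binom{b}{k-b}C_b$ is immediate.
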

We can again construct a succession rule for pairs $(S,w)$, where $S\in\cl{B}$ and $w$ is a pattern-avoiding word. We then assign a label $(p,c)$ to each such pair, where $p$ and $c$ mean the same thing as in \cref{ssec:759}. However now if an inversion sequence is in $\cl{A}$, it will have label $(p,0)$ with $p \in \{0,1,2\}$. We have the rule
\begin{equation}
\Omega_{(\leq, -, \geq)}: \left\{\begin{aligned} (0,0) \\ (p,c) &\leadsto (p+1,c)\\ &\leadsto (p+1,c-1) && \text{if}\; c>0 \\
(p,0) &\leadsto (p-\ell,b)^{w_{\ell,b}} && \text{for}\; 0 \leq \ell \leq p-1,\, \left\lceil{\textstyle \frac{\ell-1}{2}}\right\rceil \leq b \leq \ell, \, p \geq 1 \end{aligned} \right.
\end{equation}
where
\begin{equation} 
w_{\ell,b} = d_{\ell-1,b} + d_{\ell,b} = \binom{b+1}{\ell-b}C_b.
\end{equation}

We do not expect this to have a simple formulation in terms of generating functions, and our analysis of the sequence suggests that it does not have an algebraic or D-finite generating function.

\section{Asymptotics}\label{sec:asymptotics}

In this section we will briefly discuss the asymptotic behaviour of the counting sequences for each class covered in \cref{sec:right,sec:left}. For some of these classes, precise asymptotics have already been given in the corresponding OEIS pages, and we simply quote those results -- when this is the case we indicate it below.

\subsection{Algebraic classes} \label{ssec:asymps_alg}

We first summarise the asymptotics for all the classes with algebraic generating functions. These are easily derived from the dominant singularities (see e.g.\ \cite{flajolet_analytic_2009}) and we will not go into details here.

\begin{itemize}
    \item Class 663A $(-,\neq,\geq)$ (\href{https://oeis.org/A279553}{A279553}, \cref{ssec:663A}): (as per OEIS page)
    \begin{equation} I_n \sim \frac{C}{\sqrt{\pi}} \cdot n^{-3/2} \cdot \mu^n, \end{equation}
    where
    \begin{align*}
        \mu &\approx 4.73508 \text{ is a root of } 4-12x+4x^2-24x^3+5x^4 \\
        C &\approx 0.391676 \text{ is a root of } 5 + 64y^2 + 33728y^4 - 209664y^6 - 93184y^8.
    \end{align*}
    
    \item Class 733 $(\neq,\neq,\geq)$ (\href{https://oeis.org/A279554}{A279554}, \cref{ssec:733}):
    \begin{equation} I_n \sim \frac{C}{\sqrt{\pi}} \cdot n^{-3/2} \cdot \mu^n, \end{equation}
    where
    \begin{align*}
        \mu &\approx 5.16207 \text{ is a root of } 1 - 14 x + 7 x^2 - 6 x^3 + x^4 \\
        C &\approx 0.178902 \text { is a root of a large degree 16 polynomial}
    \end{align*}

    \item Class 1016 $(>,-,\neq)$ (\href{https://oeis.org/A279560}{A279560}, \cref{ssec:1016}): (as per OEIS page)
    \begin{equation} I_n \sim \frac{1}{4\sqrt{\pi}} \cdot n^{-1/2} \cdot 4^n \end{equation}

    \item Class 1176 $(>,\leq,\neq)$ (\href{https://oeis.org/A279562}{A279562}, \cref{ssec:1176}):
    \begin{equation} I_n \sim \frac{C}{\sqrt{\pi}} \cdot n^{-3/2} \cdot \left(2+2\sqrt{2}\right)^n \end{equation}
    where
    \[ C \approx 0.716315 \text{ is a root of } 961 - 41296 x^2 + 76832 x^4.\]

    \item Class 1253 $(>,\neq,\neq)$ (\href{https://oeis.org/A279563}{A279563}, \cref{ssec:1253}): (as per OEIS page)
    \begin{equation} I_n \sim \frac{1}{3\sqrt{\pi}} \cdot n^{-1/2} \cdot 4^n \end{equation}

    \item Class 1420 $(-,-,>)$ (\href{https://oeis.org/A279565}{A279565}, \cref{ssec:1420}): (as per OEIS page)
    \begin{equation} I_n \sim \frac{\sqrt{15}}{8\sqrt{2\pi}} \cdot n^{-3/2} \cdot \left(27/5\right)^n \end{equation}

    \item Class 1833A $(-,\neq,>)$ (\href{https://oeis.org/A279568}{A279568}, \cref{ssec:1833A}): (as per OEIS page)
    \begin{equation} I_n \sim \frac{C}{\sqrt{\pi}} \cdot n^{-3/2} \cdot \mu^n \end{equation}
    where
    \begin{align*}
        \mu &\approx 5.98042 \text{ is a root of } 32 + 195 x + 12 x^2 + 112 x^3 - 20 x^4 \\
        C &\approx 0.187339 \text{ is an algebraic number (which we have been unable to compute exactly)}
    \end{align*}
\end{itemize}

\subsection{Non-algebraic classes} \label{ssec:asymps_nonalg}

In this section we discuss the classes which we believe do not have algebraic generating functions. The asymptotic expressions in this section are non-rigorous and are based only on numerical analysis. There are several classes which appear to have algebraic-type dominant singularities, and we summarise those first.

\begin{itemize}
    \item Class 214 $(-,\geq,\geq)$ (\href{https://oeis.org/A279544}{A279544}, \cref{ssec:214}): (as per OEIS page)
    \begin{equation} I_n \sim C \cdot n^{-3/2} \cdot 4^n \end{equation}
    where $C \approx 0.0549097$.

    \item Class 830 $(\neq,>,\geq)$ (\href{https://oeis.org/A279558}{A279558}, \cref{ssec:830}): (thanks to V\'aclav Kot\v{e}\v{s}ovec for pointing out how to use Richardson extrapolation to get a precise estimate for $C$)
    \begin{equation}
        I_n \sim C \cdot n^{-3/2} \cdot (27/4)^n
    \end{equation}
    where $C \approx 0.00018$. 

    \item Class 1509 $(-,\geq,>)$ (\href{https://oeis.org/A279567}{A279567}, \cref{ssec:1509}): (as per OEIS page)
    \begin{equation} I_n \sim C \cdot n^{-3/2} \cdot (3+2\sqrt{2})^n \end{equation}
    where $C \approx 0.0660857.$

    \item Class 1953A $(-,>,>)$ (\href{https://oeis.org/A279569}{A279569}, \cref{ssec:1953A}): (as per OEIS page)
    \begin{equation} I_n \sim C \cdot n^{-3/2} \cdot (27/4)^n \end{equation}
    where $C \approx 0.0111684$.

\end{itemize}

For the remaining classes the asymptotic behaviour of $I_n$ is more complicated. Our best estimates are:
\begin{itemize}
    \item Class 247 $(\leq,-,\geq)$ (\href{https://oeis.org/A279551}{A279551}, \cref{ssec:247}): (thanks to Tony Guttmann for assistance with this estimate, as well for the next two classes -- see \cite{guttmann_analysis_2015} for some details on the methodology used)
    \begin{equation} I_n \sim C \cdot n^{g} \cdot 8^n \cdot \mu_1^{n^{3/8}} \end{equation}
    where $g \approx 4.25$, $\log \mu_1 \approx -13$ and $\log C \approx 8$.

    \item Class 759 $(\leq,\neq,\geq)$ (\href{https://oeis.org/A279556}{A279556}, \cref{ssec:759}):
    \begin{equation} I_n \sim C \cdot n^{g} \cdot 9^n \cdot \mu_1^{n^{3/8}} \end{equation}
    where $g \approx 3.25$, $\log \mu_1 \approx -10.4$ and $C$ is a constant.

    \item Class 2106 $(>,\leq,\geq)$ (\href{https://oeis.org/A279571}{A279571}, \cref{ssec:2106}):
    \begin{equation}
        I_n \sim C \cdot n^{g} \cdot 9^n
    \end{equation}
    where $g \approx -5.40169$ and $C \approx 88.7$.
\end{itemize}

See \cref{sec:numerical_details} for a brief discussion of the numerical work for Classes 830, 247 and 2106. The methodology for Class 759 was very similar to that of Class 247.

\section{Open questions and future work}\label{sec:conclusion}

This work enumerates all uncounted classes of inversion sequences avoiding a triple of relations, providing generating functions where (perceived to be) possible. Avoidance of a triple of relations always corresponds to avoiding a set of patterns of length 3. A natural extension of this work is to examine all uncounted classes of inversion sequences avoiding a set of three patterns of length 3. There are between 137 and 139 Wilf classes  of these (probably 137) \cite{callan_inversion_2023}. We believe that 76 of these Wilf classes have counting sequences which do not yet appear in the OEIS; moreover some of those which are in the OEIS do not appear to have any relevant citations listed. Completing this enumeration would thus be quite a monumental task, although constructions and succession rules for many of these classes will probably be quite similar to those presented in this work and others. Inversion sequences which avoid four (or more) patterns of length 3 can also be studied using the methods presented here.

% \nathan{, of which there appear to be THIS NUMBER}. We expect the constructions for many classes to be similar to those presented in this work and others.

The classes of inversion sequences avoiding a single pattern of length of 4 are also of interest, and appear to be (in most cases) significantly more complicated than their length-3 counterparts. To use binary relations, we note that in general $\binom{4}{2}=6$ relations are required to compare 4 elements of an inversion sequence.

We note that Classes 247 and 759 have asymptotic behaviour that is quite different from the other 12 classes (see \cref{ssec:asymps_nonalg} above). In particular the (conjectured) $\mu_1^{n^{3/8}}$ term (a so-called ``stretched exponential'') is reminiscent of the conjectured behaviour of 1324-avoiding permutations, see for example \cite{conway_1324-avoiding_2018,bevan_structural_2020}. The counting sequence for 1324-avoiding permutations is believed to have a term like $\mu_1^{\sqrt{n}}$ in its dominant asymptotics. It is very much an open question as to what determines whether a particular class of pattern-avoiding inversion sequences or permutations have stretched exponential behaviour.

The estimated exponent $g=-5.401$ for Class 2106 also stands out. If it is in fact a rational number then it may not have a small denominator. If it is instead irrational then that would be quite unusual.

\section*{Acknowledgements}
We thank Tony Guttmann and V\'aclav Kot\v{e}\v{s}ovec for assistance with estimates for some of the asymptotics. We also thank the anonymous reviewers for their comments on an earlier version of the paper. N.\ Britt was supported by a Small Grant from the School of Mathematics and Statistics at the University of Melbourne.

\sloppy
\printbibliography
\fussy

\appendix

\crefalias{section}{appendix}

\section{Combinatorial proofs for Classes 759 and 247}\label{sec:comb_proofs}

\cref{lem:words_R1R2,lem:words_R1R3} are fairly elementary, but we have not been able to find results like these in the literature, so we present brief proofs here.

\begin{proof}[Proof of \cref{lem:words_R1R2}]
Let $w$ be a word of length $k$ on the alphabet $\{1,\dots,b\}$ which satisfies conditions R1 and R2. Each letter of the alphabet occurs somewhere in $w$. If we delete all repetitions of letters we are left with a $b$-permutation which avoids 213, of which there are $C_b$. Equivalently, if we look at the first occurrences of each letter in $w$, we get a 213-avoiding permutation.

Suppose we have fixed the ordering of these first occurrences of each letter. We then have $k-b$ remaining letters to choose and position. Note that in order to avoid the pattern 212, all the occurrences of $b$ must be consecutive. Then, to avoid 112 and 212, all occurrences of $b-1$ must be consecutive and come after the occurrences of $b$, except possibly for the first $b-1$. Similarly, all occurrences of $b-2$ must be consecutive and come after $b$ and $b-1$, except possibly for the first occurrence, and so on.

It follows that once we have chosen how many times each letter will be repeated, there is only one way to position them. The number of choices for the repetitions is $\binom{b+k-b-1}{k-b} = \binom{k-1}{k-b}$, and the result follows.
\end{proof}

\begin{proof}[Proof of \cref{lem:words_R1R3}]
This follows in the same way as the proof of \cref{lem:words_R1R2}. Because we now also have to avoid 111, we can choose at most one repetition of each letter $1,\dots,b$. The rules for how to place them remain the same, i.e.\ once the ordering of the first occurrence of each letter has been selected (there are $C_b$ ways to do this), there is only one way to place the repetitions. 

The number of ways to choose the repetitions is $\binom{b}{k-b}$, and the result follows.
\end{proof}

\section{Asymptotic analyses}\label{sec:numerical_details}

In this Appendix we briefly discuss the numerical methods used to estimate the asymptotics for some of the non-algebraic classes studied in this paper.

\subsection{Class 830}

For Class 830 (\href{https://oeis.org/A279558}{A279558}, \cref{ssec:830}), a plot of successive ratios $r_n=I_{n+1}/I_n$ against $\frac1n$ indicates an exponential growth rate of $\mu = \frac{27}{4}$, which is quite believable since there are other classes with this same growth rate (e.g. 1953A and B). See \cref{fig:830_numerics} (a).

\begin{figure}
    \centering
    \begin{subfigure}{0.49\textwidth}
        \includegraphics[width=\textwidth]{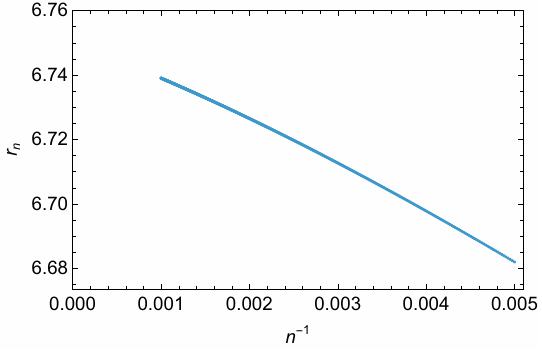}
        \caption{}
    \end{subfigure}
    \hfill
    \begin{subfigure}{0.49\textwidth}
        \includegraphics[width=\textwidth]{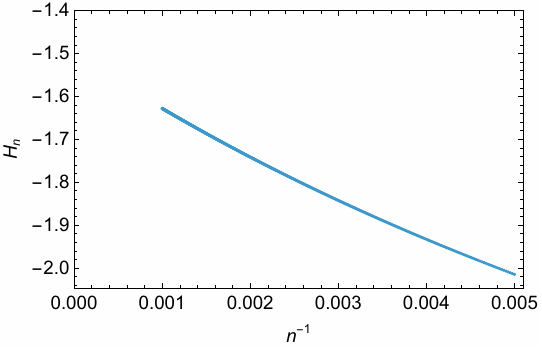}
        \caption{}
    \end{subfigure}
    \caption{(a) A plot of the ratios $r_n$ against $n^{-1}$ for Class 830. (b) A plot of the estimators $H_n$ against $n^{-1}$ for Class 830.}
    \label{fig:830_numerics}
\end{figure}

Then if we assume $I_n \sim C \cdot n^g \cdot \mu^n$ for constants $C,g$, we have
\begin{equation}
    \frac{r_n}{\mu} \sim \left(\frac{n+1}{n}\right)^g \sim 1+\frac{g}{n}. 
\end{equation}
So we expect $H_n = n\left(\frac{r_n}{\mu}-1\right) \to g$ as $n\to\infty$. Plotting this quantity against $\frac1n$ indicates that $g$ is very close to $-1.5$, and we thus work with the assumption $g=-\frac32$. See \cref{fig:830_numerics} (b).

Finally to accurately estimate $C$, we make the assumption that $I_n$ has the regular expansion
\begin{equation}
    I_n = C\cdot n^{-3/2} \cdot (27/4)^n \left(1 + \frac{C_1}{n} + \frac{C_2}{n^2} + \dots \right)
\end{equation}
Then letting $F(n) = \frac{n^{3/2}I_n}{(27/4)^n}$, we can apply Richardson extrapolation (see for example Section 8.1 in \cite{bender_advanced_1999}). This involves taking expansions of $m$ different terms $F(n_1), \dots, F({n_m})$ each up to order $1/n^{m-1}$, and eliminating the unknowns $C_1,\dots,C_{m-1}$ between them, leaving an estimate for $C$. A particularly fruitful implementation (we thank V\'aclav Kot\v{e}\v{s}ovec for pointing this out) uses
\begin{equation}
    n_j = j\cdot \left\lfloor \frac{n_\text{max}}{m}\right\rfloor,
\end{equation}
where $n_\text{max}$ is length of the known series for $I_n$. That is, we take $m$ evenly spaced values over the range $1,\dots,n_\text{max}$. Here we use $n_\text{max} = 1000$. The resulting estimate for $C$ ends up as
\begin{equation}
    \tilde C = \sum_{j=1}^m (-1)^{m+j} F(n_j) \frac{j^m}{j!(m-j)!}
\end{equation}
Using $m=10,20,\dots,100$ we have very good convergence, and estimate
\begin{equation}
    C = 0.0001809637754651179\dots
\end{equation}

\subsection{Class 247}

For Class 247 (\href{https://oeis.org/A279551}{A279551}, \cref{ssec:247}), plots of ratios $r_n = I_{n+1}/I_n$ against $1/n^p$ for various values of $p$ suggests a growth rate of $\mu=8$. (See \cref{fig:247_numerics} (a).) However, unlike Class 830, $H_n = n\left(\frac{r_n}{\mu}-1\right)$ does not appear to be approaching a constant as $n\to\infty$. This indicates that we do not have simple asymptotics of the form $C\cdot n^g \cdot \mu^n$, so instead we guess that there may also be a ``stretched exponential'' factor of the form $\mu_1^{n^\sigma}$ with $0 < \sigma < 1$. These sorts of expressions have been studied by Guttmann \cite{guttmann_analysis_2015}, and we follow some of the exploratory methods discussed there. Note that we had $n_\text{max} = 1625$ terms to work with.

\begin{figure}
    \centering
    \begin{subfigure}{0.49\textwidth}
        \includegraphics[width=0.97\textwidth]{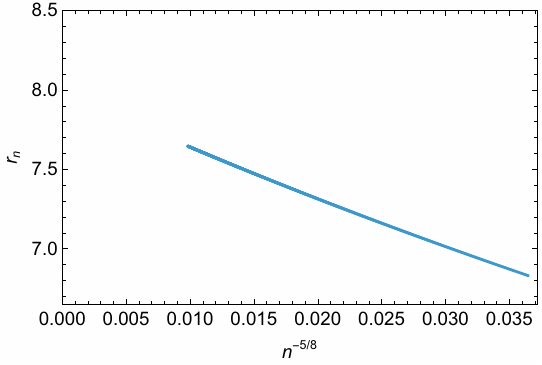}
        \caption{}
    \end{subfigure}
    \hfill
    \begin{subfigure}{0.49\textwidth}
        \includegraphics[width=\textwidth]{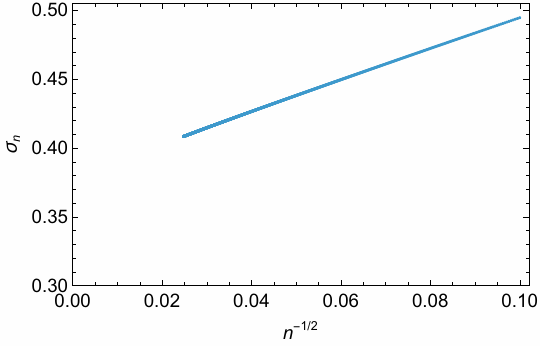}
        \caption{}
    \end{subfigure}
    \caption{(a) A plot of the ratios $r_n$ against $n^{-5/8}$ for Class 247. (b) A plot of the estimators $\sigma_n$ against $n^{-1/2}$ for Class 247.}
    \label{fig:247_numerics}
\end{figure}

First, we can expand $r_n$ to get
\begin{equation}\label{eqn:stretched_ratio_expand}
r_n = \mu\left(1+\frac{\sigma \log\mu_1}{n^{1-\sigma}} + \frac{g}{n} + \frac{\sigma^2\log^2\mu_1}{2n^{2-2\sigma}} + O(n^{\sigma-2})\right).
\end{equation}
There are several ways to estimate $\sigma$ (see \cite{guttmann_analysis_2015}). One is to see from \eqref{eqn:stretched_ratio_expand} that
\begin{equation}
    s_n = n^{1-\sigma}\left(\frac{r_n}{\mu}-1\right) = \sigma \log\mu_1 + \frac{g}{n^\sigma} + \frac{\sigma^2\log^2\mu_1}{2n^{1-\sigma}} + O(n^{-1}).
\end{equation}
By testing different values of $\sigma$, we should thus be able to find either a $\sigma \in (0,\frac12)$ such that a plot of $s_n$ against $1/n^\sigma$ appears linear, or a value of $\sigma \in (\frac12,1)$ such that a plot of $s_n$ against $1/n^{1-\sigma}$ is linear. In this case it is the former that works, and we estimate that $\sigma \approx \frac38 = 0.375$. An alternative method is to let
\begin{equation}
    \ell_n = \log\left|\frac{r_n}{\mu}-1\right|
\end{equation}
which, following from \eqref{eqn:stretched_ratio_expand}, should be asymptotically linear in $\log n$ with gradient $\sigma-1$. Accordingly we can compute local gradients from consecutive terms to get an estimate for $\sigma$:
\begin{equation}
    \sigma_n = 1 + \frac{\ell_n - \ell_{n-1}}{\log n-\log(n-1)}
\end{equation}
Doing so again gives $\sigma \approx \frac38$. See \cref{fig:247_numerics} (b).

Estimating $C, g$ and $\mu_1$ is somewhat more crude. We simply take the logarithm of the leading asymptotics,
\begin{equation}
    \log I_n \sim \log C + n\log \mu + g \log n + n^\sigma \log \mu_1,
\end{equation}
and substitute in our conjectured values $\mu=8$ and $\sigma=\frac38$. Then we take three consecutive terms $n-1,n,n+1$ and fit to the assumed form, solving for $\log C, g$ and $\log \mu_1$. Taking these estimates and plotting against $\frac{1}{n^\sigma}$, we estimate $g\approx 4.25$,  $\log\mu_1 \approx 13$, and $\log C \approx 8$.

\subsection{Class 2106}

The approach for Class 2106 (\href{https://oeis.org/A279571}{A279571}, \cref{ssec:2106}) resembles that of Class 830, although we do not achieve nearly the same precision. Again starting with a plot of ratios $r_n = I_{n+1}/I_n$, we find $\mu = 9$. Assuming that $I_n \sim C\cdot n^g \cdot \mu^n$, we then consider $H_n = n\left(\frac{r_n}{\mu}-1\right)$, but find that it is not obviously approaching any rational number with a small denominator. 

Applying Richardson extrapolation to the sequence $H_n$ using $n_\text{max}=1000$ terms gives $g \approx -5.40169$. Then taking the sequence
\begin{equation}
    F(n) = \frac{I_n}{9^n n^g}
\end{equation}
and either plotting against $\frac1n$ or using Richardson extrapolation up to around $m=25$ (larger values of $m$ lead to numerical instability), we estimate $C \approx 88.7$.

\end{document}